\pgfplotsset{compat=1.7}
\newcommand\norm[1]{\left\lVert#1\right\rVert}
\newcommand{\N}{\mathbb{N}}
\newcommand{\Sym}{\mathbb{S}}
\newcommand{\Hil}{\mathcal{H}}
\newcommand\Fix{\operatorname{Fix}}
\newcommand{\prox}{{\rm{prox}}}
\DeclareMathOperator*{\minimize}{minimize}
\DeclareMathOperator*{\Argmin}{Argmin}
\DeclareMathOperator*{\gra}{gra}
\DeclareMathOperator*{\ran}{ran}
\DeclareMathOperator*{\zer}{zer}
\DeclareMathOperator*{\dom}{dom}
\DeclareMathOperator*{\Id}{Id}
\DeclareMathOperator*{\relint}{relint}
\DeclareMathOperator{\dist}{dist}
\newcommand*{\R}{\mathbb{R}}
\newcommand*{\weak}{\rightharpoonup}
\newenvironment{absolutelynopagebreak}
  {\par\nobreak\vfil\penalty0\vfilneg
   \vtop\bgroup}
  {\par\xdef\tpd{\the\prevdepth}\egroup
   \prevdepth=\tpd}
    \newcommand{\theHALG@line}{\thealgorithm.\arabic{ALG@line}}
    \renewcommand{\theHALG@line}{\thealgorithm.\arabic{ALG@line}}
\def\th@plain{
  \thm@headfont{\normalfont\sffamily\bfseries}
  \itshape 
}
\def\th@definition{
  \thm@headfont{\normalfont\sffamily\bfseries}
  \thm@notefont{\normalfont\sffamily\bfseries}
}
\newtheoremstyle{myStyle1}
  {0.3cm}
  {0.3cm}
  {\itshape}
  {}
  {\normalfont\sffamily\bfseries}
  {:}
  {.5em}
  {}
\newtheoremstyle{myStyle2}
  {0.3cm}
  {0.3cm}
  {}
  {}
  {\normalfont\sffamily\bfseries}
  {:}
  {.5em}
  {}
\theoremstyle{myStyle1}
\newtheorem{theorem}{Theorem}[section]
\newtheorem{proposition}{Proposition}[section]
\newtheorem{corollary}{Corollary}[section]
\newtheorem{assumption}{Assumption}[section]
\newtheorem{lemma}{Lemma}[section]
\newtheorem{definition}{Definition}[section]
\newtheorem{conjecture}{Conjecture}[section]
\theoremstyle{myStyle2}
\newtheorem{remark}{Remark}[section]
\def\maketag@@@#1{\hbox{\m@th\normalfont\normalsize#1}}
\newcommand{\itemcref}[2]{\hyperref[#2]{\cref*{#1}\labelcref*{#2}}}
\crefname{equation}{}{}
\Crefname{equation}{}{}
\xdef\zero{0}
\xdef\one{1}
    \newcommand{\unitcirc}[3]{

  \xdef\markerlabel{#1}
  \xdef\startlabel{#2}
  \xdef\originlabel{#3}
  
  \marker{(0,0)}{0.08}
  \marker{(1,0)}{0.08}
  \draw[very thick] (0,0) circle (1);

  \ifx\markerlabel\one
  \node[right] at (1,0) {\footnotesize \startlabel};
  \node[left] at (0,0) {\footnotesize \originlabel};

  \fi
  }
\newcommand{\marker}[2]
{
  \pgfgettransformentries{\myxscale}{\@tempa}{\@tempa}{\myyscale}{\@tempa}{\@tempa}
  \draw[thick] ($#1+#2*(1/\myxscale,1/\myyscale)$)--($#1-#2*(1/\myxscale,1/\myyscale)$);
  \draw[thick] ($#1+#2*(-1/\myxscale,1/\myyscale)$)--($#1-#2*(-1/\myxscale,1/\myyscale)$);
}
\newcommand{\composition}[9]{

 \xdef\thetavar{#1}
\xdef\alphaone{#2}
\xdef\betaone{#3}
\xdef\alphatwo{#4}
\xdef\betatwo{#5}
\xdef\drawcircle{#6}
\xdef\writeprop{#7}
\xdef\fillgray{#8}
\xdef\startlabel{#9}

\ifx\fillgray\zero
\xdef\nbrinnercirc{20} 
\begin{scope}[shift={({1-\thetavar},0)},scale={\thetavar}]
\foreach \s in {0,...,\nbrinnercirc}
         {
           \pgfmathsetmacro{\circx}{{\alphaone-\betaone*cos(\s*360/20)}}
           \pgfmathsetmacro{\circy}{{\betaone*sin(\s*360/20)}}
           \pgfmathsetmacro{\distc}{{sqrt((\circx)^2+(\circy)^2)}}
           \draw[dashed] ({\alphatwo*\circx},{\alphatwo*\circy}) circle ({\distc*\betatwo});
           \marker{(\circx,\circy)}{0.08};
         }
       \end{scope}
       \fi

\ifx\fillgray\one
\xdef\nbrinnercircquad{15} 
 \begin{scope}[shift={({1-\thetavar},0)},scale={\thetavar}]
\foreach \s in {0,...,\nbrinnercircquad}
         {
           \pgfmathsetmacro{\circx}{{\alphaone-\betaone*cos(\s*90/\nbrinnercircquad)}}
           \pgfmathsetmacro{\circy}{{(\betaone*sin(\s*90/\nbrinnercircquad))}}
           \pgfmathsetmacro{\distc}{{sqrt((\circx)^2+(\circy)^2)}}
           \draw[name path=circleone] ({\alphatwo*\circx+\distc*\betatwo/2},{\alphatwo*\circy+\distc*\betatwo/2})--({\alphatwo*\circx-\distc*\betatwo/2},{\alphatwo*\circy-\distc*\betatwo/2});
           \draw[color=gray!25!white,fill=gray!25!white] ({\alphatwo*\circx},{\alphatwo*\circy}) circle ({\distc*\betatwo});
           
           \pgfmathsetmacro{\circx}{{\alphaone-\betaone*cos((\s+\nbrinnercircquad)*90/\nbrinnercircquad)}}
           \pgfmathsetmacro{\circy}{{(\betaone*sin((\s+\nbrinnercircquad)*90/\nbrinnercircquad))}}
           \pgfmathsetmacro{\distc}{{sqrt((\circx)^2+(\circy)^2)}}
           \draw[name path=circletwo] ({\alphatwo*\circx+\distc*\betatwo/2},{\alphatwo*\circy-\distc*\betatwo/2})--({\alphatwo*\circx-\distc*\betatwo/2},{\alphatwo*\circy+\distc*\betatwo/2});
           \draw[color=gray!25!white,fill=gray!25!white] ({\alphatwo*\circx},{\alphatwo*\circy}) circle ({\distc*\betatwo});

           \pgfmathsetmacro{\circx}{{\alphaone-\betaone*cos((\s+2*\nbrinnercircquad)*90/\nbrinnercircquad)}}
           \pgfmathsetmacro{\circy}{{(\betaone*sin((\s+2*\nbrinnercircquad)*90/\nbrinnercircquad))}}
           \pgfmathsetmacro{\distc}{{sqrt((\circx)^2+(\circy)^2)}}
           \draw[name path=circlethree] ({\alphatwo*\circx+\distc*\betatwo/2},{\alphatwo*\circy+\distc*\betatwo/2})--({\alphatwo*\circx-\distc*\betatwo/2},{\alphatwo*\circy-\distc*\betatwo/2});
           \draw[color=gray!25!white,fill=gray!25!white] ({\alphatwo*\circx},{\alphatwo*\circy}) circle ({\distc*\betatwo});
           
           \pgfmathsetmacro{\circx}{{\alphaone-\betaone*cos((\s+3*\nbrinnercircquad)*90/\nbrinnercircquad)}}
           \pgfmathsetmacro{\circy}{{(\betaone*sin((\s+3*\nbrinnercircquad)*90/\nbrinnercircquad))}}
           \pgfmathsetmacro{\distc}{{sqrt((\circx)^2+(\circy)^2)}}
           \draw[name path=circlefour] ({\alphatwo*\circx+\distc*\betatwo/2},{\alphatwo*\circy-\distc*\betatwo/2})--({\alphatwo*\circx-\distc*\betatwo/2},{\alphatwo*\circy+\distc*\betatwo/2});
           \draw[color=gray!25!white,fill=gray!25!white] ({\alphatwo*\circx},{\alphatwo*\circy}) circle ({\distc*\betatwo});
           
           \tikzfillbetween[of=circletwo and circlefour] {fill=gray!25!white};
           \tikzfillbetween[of=circleone and circlethree] {fill=gray!25!white};
         }

 \end{scope}
\fi

\ifx\fillgray\zero
\begin{scope}[shift={({1-\thetavar},0)},scale={\thetavar}]
  \draw (\alphaone,0) circle (\betaone);
  \end{scope}
\fi
\unitcirc{1}{\startlabel}{\originlabel}
 \ifx\writeprop\one
 \pgfmathsetmacro{\deltaone}{{(\alphaone)/(1-\alphaone)*(1-((1-\alphatwo)^2-(\betatwo)^2)/(1-\alphatwo))}}
 \pgfmathsetmacro{\deltatwo}{{(\alphatwo)/(1-\alphatwo)}}
 \pgfmathsetmacro{\deltathree}{{1-((((1-\alphaone)^2-(\betaone)^2)/(1-\alphaone))*(1-(((1-\alphatwo)^2-(\betatwo)^2)/(1-\alphatwo)))+((1-\alphatwo)^2-(\betatwo)^2)/(1-\alphatwo))}}
 \pgfmathsetmacro{\deltafour}{{(\deltaone*\deltatwo)/(\deltaone+\deltatwo)}}
 \pgfmathsetmacro{\alphacomp}{{(1-\thetavar)+\thetavar*\deltafour/(1+\deltafour)}}
 \pgfmathsetmacro{\betacomp}{{\thetavar*sqrt(\deltathree-\deltafour+\deltathree*\deltafour)/(1+\deltafour)}}
 \pgfmathsetmacro{\alphacomptwo}{{(1-\thetavar)+\thetavar*(\alphaone+\betaone)*(\alphatwo+\betatwo)*(1-((\betaone*\alphatwo)+(\betatwo*\alphaone))/(\alphaone*\alphatwo+\alphaone*\betatwo+\alphatwo*\betaone))}}
 \pgfmathsetmacro{\betacomptwo}{{\thetavar*(\alphaone+\betaone)*(\alphatwo+\betatwo)*(((\betaone*\alphatwo)+(\betatwo*\alphaone))/(\alphaone*\alphatwo+\alphaone*\betatwo+\alphatwo*\betaone))}}
  \pgfmathsetmacro{\alphacompthree}{{(1-\thetavar)+\thetavar*(abs(\alphaone)+\betaone)*(abs(\alphatwo)+\betatwo)*(1-((\betaone*abs(\alphatwo))+(\betatwo*abs(\alphaone)))/(abs(\alphaone)*abs(\alphatwo)+abs(\alphaone)*\betatwo+abs(\alphatwo)*\betaone))}}
 \pgfmathsetmacro{\betacompthree}{{\thetavar*(abs(\alphaone)+\betaone)*(abs(\alphatwo)+\betatwo)*(((\betaone*abs(\alphatwo))+(\betatwo*abs(\alphaone)))/(abs(\alphaone)*abs(\alphatwo)+abs(\alphaone)*\betatwo+abs(\alphatwo)*\betaone))}}
\fi

 \ifx\drawcircle\one 
 \draw[thick] (\alphacomptwo,0) circle (\betacomptwo);
 \fi
 
 
 \ifx\writeprop\one
 \node at (-2,-1) {\deltaone};
 \node at (-2,-0.5) {\deltatwo};
 \node at (-2,0) {\deltathree};
 \node at (-2,0.5) {\deltafour};
 \node at (-2,1) {\alphacomp};
 \node at (-2,1.5) {\betacomp};
 \node at (-1,1) {\alphacomptwo};
 \node at (-1,1.5) {\betacomptwo};
 \fi
 
}
\newcommand{\oneRiteration}[6]{%
  \pgfmathsetmacro{\cpar}{#1}
  \pgfmathsetmacro{\dpar}{#2}
  \pgfmathsetmacro{\al}{#3}
  \pgfmathsetmacro{\be}{#4}
  \pgfmathsetmacro{\th}{#5}
  \pgfmathsetmacro{\cTwo}{\cpar*\cpar}
  \pgfmathsetmacro{\dTwo}{\dpar*\dpar}
  \pgfmathsetmacro{\denC}{\cTwo+1}
  \pgfmathsetmacro{\denD}{\dTwo+1}
  \pgfmathsetmacro{\lam}{\be/\al}             
  \pgfmathsetmacro{\mu}{\al/\be}              
  \pgfmathsetmacro{\w}{\th*\be/(\al+\be)}     

  \def\zx{1}\def\zy{0}

  \pgfmathsetmacro{\xone}{\cTwo/\denC}
  \pgfmathsetmacro{\yone}{\cpar/\denC}

  \pgfmathsetmacro{\ronex}{(\cTwo-\lam)/\denC}
  \pgfmathsetmacro{\roney}{(1+\lam)*\cpar/\denC}

  \pgfmathsetmacro{\pdx}{(\dTwo*\ronex + \dpar*\roney)/\denD}
  \pgfmathsetmacro{\pdy}{(\dpar*\ronex + \roney)/\denD}

  \pgfmathsetmacro{\rtwox}{(1+\mu)*\pdx - \mu*\ronex}
  \pgfmathsetmacro{\rtwoy}{(1+\mu)*\pdy - \mu*\roney}

  \pgfmathsetmacro{\zpx}{(1-\w) + \w*\rtwox}
  \pgfmathsetmacro{\zpy}{\w*\rtwoy}

  \ifnum#6=1
      \ifdim \cpar pt > 1pt
        \draw[very thin] (-2,{-2/\cpar})--(2,{2/\cpar}) node[xshift=-4pt,yshift=5pt] {\footnotesize{$C_1$}};
    \else
        \draw[very thin] ({-2*\cpar},-2)--({2*\cpar},2) node[xshift=7pt,yshift=-4pt] {\footnotesize{$C_1$}};
    \fi   
          \ifdim \dpar pt > 1pt
        \draw[very thin] (-2,{-2/\dpar})--(2,{2/\dpar}) node[xshift=-4pt,yshift=5pt] {\footnotesize{$C_2$}};
    \else
        \draw[very thin] ({-2*\dpar},-2)--({2*\dpar},2) node[xshift=7pt,yshift=-4pt] {\footnotesize{$C_2$}};
    \fi   
  \fi

  \draw[thin] (\zx,\zy)--(\ronex,\roney);         
  \draw[thin] (\ronex,\roney)--(\rtwox,\rtwoy);   
  \draw[thin] (\rtwox,\rtwoy)--(\zpx,\zpy);    

  \marker{(\zx,\zy)}{0.08}
  \node[anchor=west] at (\zx,\zy) {\footnotesize{$z$}};

  \marker{(0,0)}{0.08}
  \node[anchor=east] at (0,0) {\footnotesize{$z^\star$}};

  \marker{(\ronex,\roney)}{0.08}
  \node[xshift=-6pt,yshift=6pt] at (\ronex,\roney) {\footnotesize{$r_1$}};

  \marker{(\rtwox,\rtwoy)}{0.08}
  \node[xshift=3pt,yshift=6pt] at (\rtwox,\rtwoy) {\footnotesize{$r_2$}};

  \marker{(\zpx,\zpy)}{0.08}
  \node[xshift=9pt,yshift=2pt] at (\zpx,\zpy) {\footnotesize{$z^+$}};
}
\begin{document}
\title{\Large \sffamily\bfseries Extending Douglas--Rachford Splitting for Convex Optimization}

\author{%
  Max Nilsson$^{\dagger}$, Anton Åkerman$^{\dagger}$, and Pontus Giselsson\\[0.5ex]
  {\small Department of Automatic Control, Lund University, Lund, Sweden}\\
  {\small \{\href{mailto:max.nilsson@control.lth.se}{max.nilsson},
  \href{mailto:anton.akerman@control.lth.se}{anton.akerman},
  \href{mailto:pontus.giselsson@control.lth.se}{pontus.giselsson}\}@control.lth.se}\\[1ex]
  {\small $^{\dagger}$These authors contributed equally to this work.}
}
\date{}
\maketitle
\begin{abstract}
    The Douglas–Rachford splitting method is a classical and widely used algorithm for solving monotone inclusions involving the sum of two maximally monotone operators. It was recently shown to be the unique frugal, no-lifting resolvent-splitting method that is unconditionally convergent in the general two-operator setting. In this work, we show that this uniqueness does not hold in the convex optimization case: when the operators are subdifferentials of proper, closed, convex functions, a strictly larger class of frugal, no-lifting resolvent-splitting methods is unconditionally convergent. We provide a complete characterization of all such methods in the convex optimization setting and prove that this characterization is sharp: unconditional convergence holds exactly on the identified parameter regions.
These results immediately yield new families of convergent ADMM-type and Chambolle--Pock-type methods obtained through their Douglas–Rachford reformulations.
\end{abstract}
\section{Introduction}
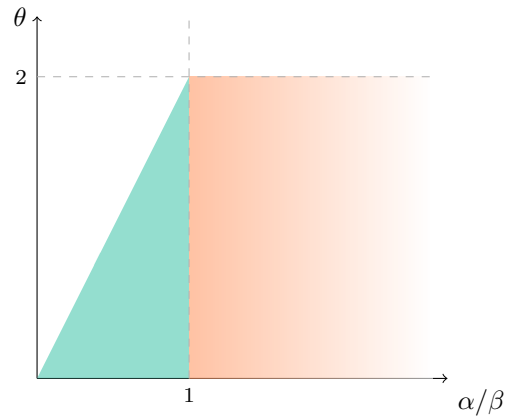
\begin{wrapfigure}{r}{0.5\linewidth}
    \vspace{-4em}
    \hspace{0em}
    \begin{tikzpicture}[scale=2, xshift=2cm]
        \draw[->] (0,0) -- (2.7,0) node[below right, font=\small] {$\alpha/\beta$};
        \draw[->] (0,0) -- (0,2.4) node[left, font=\small] {$\theta$};
    
        \definecolor{niceTeal}{RGB}{77,201,176}
        \definecolor{niceCoral}{RGB}{255,153,102} 
    
        \fill[niceTeal, opacity=0.6]
            (0,0) -- plot[domain=0:1] (\x, {2*\x}) -- (1,0) -- cycle;
        \shade[left color=niceCoral, right color=white, opacity=0.6]
            (1,0) -- (2.6,0) -- (2.6,2) -- (1,2) -- cycle;
    
        \draw[dashed, gray!60] (1,0) -- (1,2.4);
        \draw[dashed, gray!60] (0,2) -- (2.6,2);
        \node[left, font=\scriptsize] at (0,2) {$2$};
        \node[font=\scriptsize, below] at (1,0) {$1$};
    \end{tikzpicture}
    \caption{Parameter region of unconditional convergence for the algorithm in \eqref{eq::main_algorithm_ryu_functions}. The region of unconditional convergence for the general monotone inclusion setting (i.e., for the algorithm in \eqref{eq::main_algorithm_ryu}) is the line segment $\alpha/\beta=1$ and $\theta\in(0,2)$.}
    \label{fig::regions_S1_S2}
\end{wrapfigure}

The Douglas--Rachford splitting method is a celebrated algorithm for solving monotone inclusion problems of the form
\begin{equation}\label{eq::main_resolvent_problem}
\text{find } x \in \Hil \text{ such that } 0 \in A(x) + B(x),
\end{equation}
where $A,B:\Hil\rightrightarrows\Hil$ are maximally monotone operators and $\Hil$ is a real Hilbert space. While originally developed for solving heat conduction problems~\cite{DouglasRachford1956}, it has since Lions and Mercier in~\cite{lions1979splitting} proved its applicability to maximal monotone inclusion problems been extensively used and analyzed. A particularly important case of study is the convex optimization setting with problems of the form
\begin{equation}\label{eq::main_optimization_problem}
    \minimize_{x \in \Hil} f(x) + g(x),
\end{equation} 
where $f, g : \Hil \to \R\cup\{+\infty\}$ are proper, closed, and convex functions. Under mild assumptions (see e.g., \cite{BauschkeCombettes2017}), the problem \eqref{eq::main_optimization_problem} is a specialization of \eqref{eq::main_resolvent_problem} by letting $A=\partial f$ and $B=\partial g$.

An early interpretation of the Douglas--Rachford method was provided in~\cite{Eckstein1989,EcksteinBertsekas1992}, where it was shown to be an instance of the proximal point algorithm~\cite{Martinet1970,Rockafellar1976PPA}, offering a powerful framework for its analysis. Many subsequent works have provided (local) linear convergence for the Douglas--Rachford splitting method under various assumptions. For instance: \cite{BorweinSims2011AbsenceConvexity,HesseLuke2013NonconvexRegularity,BauschkeEtAl2014FriedrichsAngle,HesseLukeNeumann2014SparseAffine} for convex and nonconvex feasibility problems, 
\cite{Boley2013LocalLinearADMM,DemanetZhang2016Eventual,LiangFadiliPeyre2017Local} for convex optimization problems under more or less general partial smoothness assumptions, \cite{lions1979splitting,GiselssonBoyd2017MetricSelection,Giselsson2017TightDR,MoursiVandenberghe2019LipschitzStrongMono,RyuTaylorBergelingGiselsson2020OSPEP} under different combinations of strong monotonicity and smoothness assumptions in monotone inclusion and convex optimization settings, and \cite{LiPong2016NonconvexDR,ThemelisPatrinos2020TightNonconvex} for nonconvex optimization. Many of these analyses are tight in the sense that they are not improvable without changing the assumptions. 

The remarkable success of the Douglas–Rachford splitting method can perhaps in part be explained by the result of \cite{ryu2020uniqueness}, which shows that it is the unique ``simple'' resolvent splitting method for problems of the form \eqref{eq::main_resolvent_problem} that both possesses the \emph{fixed-point encoding} property and is \emph{unconditionally convergent}. Here, ``simple'' refers to methods that use \emph{minimal lifting} and are \emph{frugal}. Loosely speaking, a resolvent splitting method is an algorithm that accesses the operators $A$ and $B$ only through their resolvents (denoted $J_{\alpha A}$ and $J_{\beta B}$ respectively, where $\alpha,\beta\in \R_{++}$ are step sizes) and forms iterates from pre-defined linear combinations of their inputs and outputs. Such a method is \emph{frugal} if it evaluates each resolvent only once per iteration, has \emph{minimal lifting} if the dimension $d$ (the lifting number) of the product space vector in $\Hil^d$ that carries information between algorithm iterations is as small as possible (in the Douglas--Rachford case, we have no lifting, i.e., $d=1$), is a \emph{fixed-point encoding} if the fixed-point set of the algorithm and the solution set of the problem are simultaneously nonempty, and is \emph{unconditionally convergent} if there exist parameter choices guaranteeing convergence for every problem in the problem class and every algorithm starting point. 

The uniqueness of the Douglas–Rachford method under these conditions for problems of the form \eqref{eq::main_resolvent_problem} is established in \cite{ryu2020uniqueness} through two results: one parameterizing the class of frugal, no-lifting resolvent methods with the fixed-point encoding property, and one characterizing its subset of methods with unconditional convergence. It was shown that each frugal, no-lifting resolvent method for solving \eqref{eq::main_resolvent_problem}---up to equivalence---satisfies the fixed-point encoding property if and only if it is of the form
\begin{equation}\label{eq::main_algorithm_ryu}
\left\lfloor
\begin{aligned}
    x_1 &= J_{\alpha A}(z) \\
    x_2 &= J_{\beta B}\left(\left( 1 + \frac{\beta}{\alpha}\right)x_1  - \frac{\beta}{\alpha}z\right) \\
    z^+ &= z + \theta(x_2 - x_1)
\end{aligned} \right. ,
\end{equation} 
with parameters $\alpha,\beta\in\R_{++}$ and $\theta\in\R\setminus\{0\}$. The second result in \cite{ryu2020uniqueness} establishes that, among algorithms of the form \eqref{eq::main_algorithm_ryu}, the restrictions $\alpha=\beta$ and $\theta\in(0,2)$ are necessary and sufficient for unconditional convergence whenever $\dim(\Hil)\geq 2$. The results in \cite{ryu2020uniqueness} have sparked much research on minimal lifting methods, see, e.g., \cite{MalitskyTam2023MinimalLifting,AragonArtachoBotTorregrosaBelen2023MinimalLiftingPD,MorinBanertGiselsson2024Frugal,AkermanChencheneGiselssonNaldi2025SplitFB}, which address monotone inclusions involving sums of arbitrarily many operators.

In this work, we address the following question in the two-operator setting:
\begin{center}
\begin{minipage}{0.9\columnwidth}
\centering
\emph{Can we characterize all frugal, no-lifting, unconditionally convergent resolvent splitting methods with fixed-point encoding for solving \eqref{eq::main_resolvent_problem} in the convex optimization setting $A=\partial f$, $B=\partial g$?}
\end{minipage}
\end{center}
We show that the answer is \emph{yes}. Moreover, the restriction to the optimization setting leads to a significantly larger class of admissible algorithms than in the general monotone inclusion setting of \cite{ryu2020uniqueness}. In particular, we show that the convex-optimization specialization of \eqref{eq::main_algorithm_ryu}
\begin{equation}\label{eq::main_algorithm_ryu_functions}
\left\lfloor
\begin{aligned}
    x_1 &= \prox_{\alpha f}(z) \\
    x_2 &= \prox_{\beta g}\left(\left( 1 + \frac{\beta}{\alpha}\right)x_1  - \frac{\beta}{\alpha}z\right) \\
    z^+ &= z + \theta(x_2 - x_1)
\end{aligned} \right.,
\end{equation} 
completely characterizes all frugal, no-lifting, unconditionally convergent resolvent splittings with fixed-point encoding in the convex optimization setting \eqref{eq::main_optimization_problem}, when the parameters $(\alpha, \beta, \theta)$ satisfy 
\begin{align}
    (\alpha, \beta, \theta) \in \left\{(\alpha, \beta, \theta) \in  \R_{++}^3  \mid  \theta < \min\{2, 2\alpha/\beta\} \right\}.
    \label{eq:conv_param}
\end{align}
Figure~\ref{fig::regions_S1_S2} illustrates how the unconditional-convergence region expands in the convex optimization setting, shown in terms of the ratio $\alpha/\beta$, which in the monotone-inclusion setting is constrained to satisfy $\alpha/\beta=1$.

Unlike the Douglas--Rachford case, our sufficiency analysis does not rely on the conceptually simple proximal point algorithm or nonexpansive operators. Instead, we partition the region of unconditionally convergent parameters in \eqref{eq:conv_param} as $(\alpha, \beta, \theta) \in S^{(1)}\cup S^{(2)}$, where 
\begin{align*}
    S^{(1)} &\coloneqq \left\{(\alpha, \beta, \theta) \in \R_{++}^3  \mid \alpha/\beta \in (0, 1], \ \theta \in (0, 2\alpha/\beta) \right\}, \\
    S^{(2)} &\coloneqq \left\{(\alpha, \beta, \theta) \in \R_{++}^3  \mid \alpha/\beta \in [1, +\infty), \ \theta \in (0, 2) \right\},
    \end{align*}
and develop Lyapunov analyses for these newly identified parameter regions whose intersection $\left(S^{(1)} \cap S^{(2)} =\R_{++}\{(1,1)\}\times (0,2)\right)$ represents the unconditional-convergence region for monotone inclusions established in \cite{ryu2020uniqueness}. The discovery of the Lyapunov inequalities has been assisted by the automatic Lyapunov methodologies and tools of \cite{UpadhyayaBanertTaylorGiselsson2025AutoLyapTight,UpadhyayaTaylorBanertGiselsson2025AutoLyap}. Necessity is demonstrated using simple one-dimensional counterexamples involving the zero function and the indicator function of $\{0\}$.

While we primarily characterize the entire class of frugal, minimal lifting, unconditionally convergent resolvent splittings with fixed-point encoding in the optimization setting, we emphasize that all our convergence guarantees for methods with $\alpha/\beta\neq 1$ appear to be new. Because the alternating direction method of multipliers (ADMM) \cite{GlowinskiMarroco1975,GabayMercier1976,BoydParikhChuPeleatoEckstein2011} arises as a special case of Douglas–Rachford splitting \cite{Gabay1983,EcksteinBertsekas1992}, our results yield a family of completely new convergent ADMM-type methods for convex optimization. Likewise, since the Chambolle--Pock algorithm \cite{chambolle2011first} is also a Douglas–Rachford instance \cite{o2020equivalence}, we obtain an expanded class of convergent Chambolle--Pock–type methods, some of which are new. In particular, we present the, to the authors' knowledge, first doubly relaxed Chambolle--Pock-type method (see \cref{alg::extended_chambolle_pock} for the specific relaxations) with freedom in both relaxation parameters. Previous works, such as \cite{he2012convergence,banert2025chambolle}, have one of them fixed. We also present extensions of the parallel splitting method in \cite[Proposition~28.7]{BauschkeCombettes2017} and a generalized alternating projections method \cite{GAP_Agmon,GAP_Motzkin,GAP_Bregman}.

The rest of the paper is organized as follows. \Cref{sec::Preliminaries} introduces notation and definitions and presents a few preliminary results.
\Cref{sec::Problem_Setting} presents the problem setting and our main result; the complete characterization of all frugal, no-lifting resolvent splittings in the convex optimization setting that are both unconditionally convergent and have the fixed-point encoding property. \Cref{sec::Convergence Analysis} is devoted to proving the main result. \cref{sec::Additional_convergence_results} shows ergodic $\mathcal{O}(1/K)$-convergence for two primal-dual gap functions for the characterized methods.  \Cref{sec::admm_cp} applies our method to specific problem instances to derive new convergent ADMM-, Chambolle--Pock\mbox{-,} parallel splitting-, and alternating projection-type algorithms, and \cref{sec::conclusions} concludes the paper.
\section{Preliminaries} \label{sec::Preliminaries}
\subsection{Notation}
Our notation closely follows \cite{rockafellar1970convex,BauschkeCombettes2017}. We let $\N = \{0, 1, \dots\}$ denote the natural numbers, $\R$ denote the real numbers, $\R_+$ the nonnegative numbers, and $\R_{++}$ the positive numbers. Moreover $\R^n$ denotes the $n$-dimensional real vectors, $\R^{n\times m}$ denotes the real $n\times m$ matrices, and $\mathbb{S}^n$ denotes the real symmetric $n\times n$ matrices. 

We let $(\mathcal{H}, \langle \cdot, \cdot \rangle)$ be a real Hilbert space. The notation $A : \Hil \rightrightarrows \Hil$ means that $A$ is set-valued and maps $\Hil$ into subsets of $\Hil$. Its graph is defined as $\gra(A) \coloneqq \{(x,u) \mid u\in Ax\}$.
An operator $A:\Hil\rightrightarrows\Hil$ is monotone if $\langle u-v,x-y\rangle\geq 0$ for all $x,y\in\Hil$, $u\in A(x)$, and $v\in A(y)$. It is maximally monotone if there exists no monotone operator $B:\Hil\rightrightarrows\Hil$ such that $\gra(B)$ properly contains $\gra(A)$.  
The resolvent of $A:\Hil\rightrightarrows\Hil$ is given by $J_{A} \coloneqq (A + \Id)^{-1} : \Hil \rightrightarrows \Hil$ and is single-valued for all maximally monotone operators, see \cite[Corollary~23.9]{BauschkeCombettes2017}. Since we work exclusively with resolvents of maximally monotone operators $A$, we regard $J_A$ as a mapping $J_A : \Hil \to \Hil$. 
For a proper, closed, and convex function $f : \Hil \to \R \cup \{+\infty\}$, its subdifferential is the set-valued operator $\partial f : \Hil \rightrightarrows \Hil$ defined by
$$\partial f(x)
\coloneqq
\big\{ u \in \Hil \mid
f(y) \ge f(x) + \langle u, y - x \rangle
 \text{ for all } y \in \Hil
\big\}
$$ for each $x\in\Hil$.
The subdifferential $\partial f$ is maximally monotone \cite[Theorem~20.25]{BauschkeCombettes2017}, and its resolvent coincides with the proximal operator of $f$, i.e., $\prox_f = J_{\partial f}$; see \cite[Chapter~24]{BauschkeCombettes2017} for details.

Let $C$ be a subset of $\Hil$. We let $\iota_C : \Hil \to \R \cup \{+\infty\}$ denote the indicator function of $C$, which equals 0 for $x\in C$ and $+\infty$ for $x\not\in C$, and let $\Pi_C:=\prox_{\iota_C}$ denote the orthogonal projection onto $C$.

Given a matrix $Q = (q_{ij})_{i, j} \in \R^{n \times m}$, we associate it with the bounded linear operator $Q \otimes \Id : \Hil^m \to \Hil^n$. Throughout, we use the same notation for the matrix and the operator in the following sense: given a matrix $Q\in\R^{n\times m}$ and a vector $v = (v_1, \dots, v_m) \in \Hil^m$, then $Qv \coloneqq (Q \otimes \Id) v = \left(\sum_{j = 1}^m q_{1j}v_j, \dots, \sum_{j = 1}^m q_{nj}v_j\right) \in \Hil^n$. 

Moreover, we will overload $\langle\cdot, \cdot \rangle$ to also denote the induced inner product on $\Hil^m$, i.e., for all $(v, w)  \in \Hil^m \times \Hil^m$, we let $\langle v, w \rangle \coloneqq \sum_{i = 1}^m \langle v_i, w_i \rangle$. For any symmetric matrix $Q \in \Sym^{m}$ and any $v \in \Hil^m$, we denote $\mathcal{Q}(Q, v)  \coloneqq \langle v, Q v\rangle$.

We will use the following notations extensively throughout the paper. The set of maximally monotone operators is denoted by
\begin{equation}
    \mathcal{A} \coloneqq \{ A : \Hil \rightrightarrows \Hil \mid A \text{ is maximally monotone} \},
    \label{eq::max_mono_set}
\end{equation}
the set of proper, closed, and convex functions by
\begin{equation*}
    \mathcal{F} \coloneqq \{ f : \Hil \to \R \cup \{+\infty\} \mid f \text{ is proper, closed, and convex} \},
\end{equation*}
and the corresponding subdifferential set by
\begin{equation*}
    \partial \mathcal{F} \coloneqq \{ \partial f :\Hil\rightrightarrows\Hil  \mid  f \in \mathcal{F}\}.
\end{equation*}
We note that $\partial \mathcal{F} \subset \mathcal{A}$ by \cite[Theorem~20.25]{BauschkeCombettes2017}. Throughout this paper, the Hilbert space $\Hil$ is assumed to satisfy $\dim \Hil \geq 1$, i.e., $\Hil$ should not be a singleton.

\subsection{Definitions and Preliminary Results}

\begin{definition}[$D_f$] \label{def::D_h}
    Let $f:\Hil\to\R\cup\{+\infty\}$. Then $D_f : (\dom f)^2 \times \Hil \to \R$ is defined by $$D_f(x, y, u) \coloneqq f(x) - f(y) - \langle u, x - y \rangle$$ for each $(x, y) \in (\dom f)^2$ and $u \in \Hil$.
\end{definition}
A function $D_f$ satisfying \cref{def::D_h} will be called a \emph{Bregman-type distance}. 
\begin{lemma}
    \label{lem::Bregman_three_point}
    Let $f:\Hil\rightarrow \R\cup \{+\infty\}$.
    Then
    $$D_f(z, y, v) - D_f(x, y, v) = D_f(z, x, u) + \langle u-v, z-x\rangle,
    $$ for each $(x, y, z)\in (\dom f)^3$ and $(u, v)\in \Hil^2$.
\end{lemma}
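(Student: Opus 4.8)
The plan is to prove this purely by unwinding \cref{def::D_h} on both sides and matching terms; no convexity, monotonicity, or properness is needed, so the identity holds for any extended-real-valued $f$ once all the points involved lie in $\dom f$.

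First I would write out the left-hand side. By definition, $D_f(z,y,v) = f(z) - f(y) - \langle v, z-y\rangle$ and $D_f(x,y,v) = f(x) - f(y) - \langle v, x-y\rangle$. Subtracting, the two copies of $f(y)$ cancel, and combining the inner products via bilinearity gives $\langle v, x-y\rangle - \langle v, z-y\rangle = -\langle v, z-x\rangle$. Hence
\[
D_f(z,y,v) - D_f(x,y,v) = f(z) - f(x) - \langle v, z-x\rangle .
\]
Note in particular that the reference point $y$ has dropped out entirely, which is the conceptual content of the identity.

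Next I would expand the right-hand side. We have $D_f(z,x,u) = f(z) - f(x) - \langle u, z-x\rangle$, so adding $\langle u-v, z-x\rangle = \langle u, z-x\rangle - \langle v, z-x\rangle$ cancels the $\langle u, z-x\rangle$ term and leaves exactly $f(z) - f(x) - \langle v, z-x\rangle$. This coincides with the expression derived for the left-hand side, completing the proof.

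The only thing to be careful about is the domain bookkeeping: the statement presumes $(x,y,z)\in(\dom f)^3$, so every value $f(x), f(y), f(z)$ appearing above is finite and the subtractions and cancellations are legitimate in $\R$; the inner-product terms are defined for arbitrary $u,v\in\Hil$. There is no genuine obstacle here — the result is a two-line algebraic identity — but it is worth stating because it is exactly the form in which the lemma will later be applied (typically with $u\in\partial f(x)$ so that $D_f(z,x,u)\ge 0$, turning the identity into a useful inequality).
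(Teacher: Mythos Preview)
Your proof is correct and is essentially the same as the paper's: both simply expand the definition of $D_f$, cancel $f(y)$ and combine the inner products to reduce the left-hand side to $f(z)-f(x)-\langle v,z-x\rangle$, then recognize this as $D_f(z,x,u)+\langle u-v,z-x\rangle$. The paper just writes the chain of equalities in three lines without expanding the right-hand side separately.
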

\begin{proof}
    By \cref{def::D_h} we have that
    \begin{align*}
        D_f(z, y, v) - D_f(x, y, v) &= \left(f(z) - f(y) - \langle v, z-y\rangle\right) - \left(f(x) - f(y) - \langle v, x-y\rangle\right)\\
        &=f(z)-f(x) - \langle v, z-x\rangle \\
        &= D_f(z, x, u) + \langle u-v, z-x\rangle.
    \end{align*}
\end{proof}
\begin{lemma}
    \label{lem::Bregman_two_point}
    Let $f:\Hil\rightarrow \R\cup \{+\infty\}$.
    Then
    $$D_f(x, y, v) + D_f(y, x, u) = \langle u-v, x-y\rangle
    $$ for each $(x,y)\in (\dom f)^2$ and $(u, v)\in \Hil^2$.
\end{lemma}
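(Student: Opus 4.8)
The plan is to prove this directly from \cref{def::D_h}, exactly as in the proof of \cref{lem::Bregman_three_point}: expand both Bregman-type distances and observe that the function-value terms cancel. Concretely, I would write $D_f(x,y,v) = f(x)-f(y)-\langle v, x-y\rangle$ and $D_f(y,x,u) = f(y)-f(x)-\langle u, y-x\rangle$, add the two expressions, note that $f(x)-f(y)+f(y)-f(x)=0$, and combine the remaining inner-product terms using $-\langle u, y-x\rangle = \langle u, x-y\rangle$ to obtain $\langle u-v, x-y\rangle$.

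Alternatively, one can derive it as an immediate specialization of \cref{lem::Bregman_three_point} by taking $z=y$: since $D_f(y,y,v)=0$ by \cref{def::D_h}, the three-point identity reduces to $-D_f(x,y,v) = D_f(y,x,u) + \langle u-v, y-x\rangle$, and rearranging (using $\langle u-v, y-x\rangle = -\langle u-v, x-y\rangle$) gives the claim. I would most likely present the first, self-contained version for clarity, since it is a two-line computation and parallels the style of the preceding lemma.

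There is no real obstacle here; the only thing to be careful about is the sign bookkeeping in the inner-product terms and the implicit requirement that $x,y\in\dom f$ so that $f(x),f(y)$ are finite (which is already part of the hypothesis, as $D_f$ is only defined on $(\dom f)^2\times\Hil$). No properness, closedness, or convexity of $f$ is needed for this identity.
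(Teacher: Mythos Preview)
Your proposal is correct; both routes you describe work without issue. The paper actually takes your second route---it sets $z=y$ in \cref{lem::Bregman_three_point} and uses $D_f(y,y,v)=0$---so the only difference is presentational: you prefer the direct two-line expansion, whereas the paper opts for the one-line specialization.
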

\begin{proof}
    Let $z = y$ in \cref{lem::Bregman_three_point} and note that $D_{f}(y,y,v)=0$.
\end{proof}

\begin{lemma} \label{lem::Dh_nonnegative}
    Let $f:\Hil\to\R\cup\{+\infty\}$. Then $D_f(x, y, u) \geq 0$ for each $(x, y) \in (\dom f)^2$ and $u \in \partial f(y)$.
\end{lemma}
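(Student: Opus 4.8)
The statement is essentially a restatement of the subgradient inequality, so the proof is immediate. The plan is to unpack \cref{def::D_h} and invoke the defining property of $\partial f$.

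First I would fix $(x,y)\in(\dom f)^2$ and $u\in\partial f(y)$. Because $x,y\in\dom f$, both $f(x)$ and $f(y)$ are finite, so the quantity $D_f(x,y,u)=f(x)-f(y)-\langle u,x-y\rangle$ is a well-defined real number. By the definition of the subdifferential (the same inequality used in the paper to define $\partial f$), $u\in\partial f(y)$ means
$$
f(z)\ \ge\ f(y)+\langle u,\,z-y\rangle\qquad\text{for all }z\in\Hil.
$$
Specializing this inequality to $z=x$ gives $f(x)\ge f(y)+\langle u,x-y\rangle$, i.e.
$$
D_f(x,y,u)=f(x)-f(y)-\langle u,x-y\rangle\ \ge\ 0,
$$
which is the claim.

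There is no real obstacle here: the result follows directly from the definition, and no convexity/closedness of $f$ is even needed beyond the fact that the subgradient inequality holds at $u$. (If one prefers, the same conclusion can be read off from \cref{lem::Bregman_two_point} together with monotonicity of $\partial f$, but the one-line argument via the subgradient inequality is cleaner.)
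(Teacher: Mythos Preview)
Your proof is correct and matches the paper's approach exactly: the paper simply states that the result follows directly from the subdifferential definition, which is precisely the subgradient inequality you invoke.
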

\begin{proof}
    This follows directly from the subdifferential definition \cite[Definition~16.1]{BauschkeCombettes2017}. 
\end{proof}

\section{Problem Setting and Main Results} \label{sec::Problem_Setting}

In this section, we present our main result: a complete characterization of the frugal, no-lifting resolvent-splitting methods that are unconditionally convergent and satisfy the fixed-point encoding property for problems of the form \eqref{eq::main_resolvent_problem} with $A=\partial f$ and $B=\partial g$, where $f,g:\Hil\to\R\cup\{+\infty\}$ are proper, closed, and convex functions. The starting point for our analysis is Theorem~1 of \cite{ryu2020uniqueness}, which gives an explicit parameterization of all frugal, no-lifting resolvent-splitting methods satisfying the fixed-point encoding property, under the assumption that the two operators are maximally monotone. Since the proof of that result relies only on subdifferential operators of proper, closed, convex functions, the same parameterization applies in our setting. This parameterization is given explicitly by \cref{alg::main}, and hence the task of obtaining our characterization reduces to determining which choices of $(\alpha,\beta,\theta)$ in \cref{alg::main} lead to unconditional convergence in the convex optimization setting.

To avoid unnecessary notational overhead, we refer the reader to \cite{ryu2020uniqueness} for the formal definition of the class of frugal, no-lifting resolvent-splitting methods and for the proof of its equivalence to the parameterization in \cref{alg::main}. For convenience, we reproduce the characterization result here in a form relevant to our convex optimization setting.

\begin{algorithm}[htbp]
	\caption{Extended Douglas--Rachford Splitting}
	\begin{algorithmic}[1]
	    \State \textbf{Input:} $(A, B, z^0, \alpha, \beta, \theta) \in  \mathcal{A} \times \mathcal{A} \times \Hil \times \R_{++} \times \R_{++} \times (\R\setminus\{0\})$
		\For {$k=0,1,2,\ldots$}
		    \State \textbf{Update: } \begin{equation*}
                \left\lfloor
                \begin{aligned}
                    x^k_1 &= J_{\alpha A}(z^k) \\
                    x^k_2 &= J_{\beta B}\left(\left( 1 + \frac{\beta}{\alpha}\right)x^k_1  - \frac{\beta}{\alpha}z^k\right) \\
                    z^{k+1} &= z^k + \theta(x^k_2 - x^k_1)
                \end{aligned} \right. 
                \end{equation*}
		\EndFor
	\end{algorithmic}
\label{alg::main}
\end{algorithm}
\begin{theorem}[From \cite{ryu2020uniqueness} Theorem~1]
    \label{thm::Full_characterization}
            Let $f,g:\Hil\to\R\cup\{+\infty\}$ be proper, closed, and convex functions. Up to equivalence, a method is a frugal, no-lifting resolvent-splitting satisfying the fixed-point encoding property for solving \cref{eq::main_resolvent_problem} with $A=\partial f$ and $B=\partial g$ if and only if it is of the form given in \cref{alg::main}.
\end{theorem}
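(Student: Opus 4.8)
The plan is to invoke Theorem~1 of \cite{ryu2020uniqueness} essentially verbatim, verifying only that its hypotheses are met by the specialization $A=\partial f$, $B=\partial g$. The cited theorem asserts that for \emph{any} pair of maximally monotone operators $A,B:\Hil\rightrightarrows\Hil$, a method is --- up to equivalence --- a frugal, no-lifting resolvent splitting with the fixed-point encoding property if and only if it takes the form \cref{alg::main} for some $(\alpha,\beta,\theta)\in\R_{++}\times\R_{++}\times(\R\setminus\{0\})$. Since $\partial\mathcal{F}\subset\mathcal{A}$ (noted in the Preliminaries, via \cite[Theorem~20.25]{BauschkeCombettes2017}), the operators $A=\partial f$ and $B=\partial g$ are admissible inputs, so the "if" direction is immediate: every instance of \cref{alg::main} with these operators is a frugal, no-lifting resolvent splitting with the fixed-point encoding property.

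For the "only if" direction, I would spell out why the argument of \cite{ryu2020uniqueness} does not lose anything when we restrict the operator class. The key observation is that the characterization proof in \cite{ryu2020uniqueness} derives the form \cref{alg::main} by testing a candidate method only on specific operators, all of which can be taken to be subdifferentials of proper, closed, convex functions (indeed, the canonical hard instances are normal-cone operators of subspaces or points, i.e.\ subdifferentials of indicator functions). Concretely, the fixed-point encoding requirement and the frugal/no-lifting structural constraints already pin down the linear combination coefficients once one evaluates the method on, say, $A=\partial\iota_{\{0\}}$ and $B=0=\partial 0$ and on scalings thereof; these are all in $\partial\mathcal{F}$. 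Hence any method that is a frugal, no-lifting resolvent splitting with fixed-point encoding for the subclass $A=\partial f$, $B=\partial g$ is, a fortiori, forced into the form \cref{alg::main}. I would phrase this as: the necessity half of the argument in \cite{ryu2020uniqueness} uses only test instances lying in $\partial\mathcal{F}\times\partial\mathcal{F}$, so it applies unchanged, giving the "only if" direction in our setting.

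The only genuine subtlety --- and the step I expect to require the most care --- is the meaning of "up to equivalence" and the precise formal definition of the class of frugal, no-lifting resolvent splitting methods, which the excerpt deliberately outsources to \cite{ryu2020uniqueness}. Since we do not reproduce that definition, the honest move is to state explicitly that we adopt the definitions of \cite{ryu2020uniqueness} and that the equivalence relation is the one from that paper (relabeling of iterates and invertible affine reparameterizations of the internal variable). With that convention fixed, the proof is just the two short implications above; there is no computation to grind through. I would therefore keep the written proof to a few sentences: cite the "if" direction from $\partial\mathcal{F}\subset\mathcal{A}$, and cite the "only if" direction by observing that the necessity argument of \cite{ryu2020uniqueness} uses only subdifferential test operators and hence specializes directly.

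\begin{proof}
We adopt the definitions of the class of frugal, no-lifting resolvent-splitting methods, of the fixed-point encoding property, and of the equivalence relation "up to equivalence" from \cite{ryu2020uniqueness}.

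For the "if" direction, let $f,g\in\mathcal{F}$ and $(\alpha,\beta,\theta)\in\R_{++}\times\R_{++}\times(\R\setminus\{0\})$. By \cite[Theorem~20.25]{BauschkeCombettes2017} we have $\partial f,\partial g\in\mathcal{A}$, so $(\partial f,\partial g)$ is an admissible input pair for the characterization of \cite[Theorem~1]{ryu2020uniqueness}. That theorem states that \cref{alg::main}, applied to any maximally monotone pair, is a frugal, no-lifting resolvent-splitting satisfying the fixed-point encoding property for \cref{eq::main_resolvent_problem}. Specializing to $A=\partial f$ and $B=\partial g$ gives the claim.

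For the "only if" direction, suppose a method is a frugal, no-lifting resolvent-splitting satisfying the fixed-point encoding property for \cref{eq::main_resolvent_problem} with $A=\partial f$ and $B=\partial g$ for all proper, closed, convex $f,g$. The necessity part of \cite[Theorem~1]{ryu2020uniqueness} derives the form \cref{alg::main} by imposing the structural constraints of the method class together with the fixed-point encoding property, and the operator instances used to pin down the linear-combination coefficients may all be taken to be subdifferentials of proper, closed, convex functions (e.g.\ the zero function and the indicator function of $\{0\}$, together with positive scalings). Hence the same argument applies to a method that enjoys these properties only on $\partial\mathcal{F}\times\partial\mathcal{F}$, and forces it --- up to equivalence --- into the form \cref{alg::main} for some $(\alpha,\beta,\theta)\in\R_{++}\times\R_{++}\times(\R\setminus\{0\})$.
\end{proof}
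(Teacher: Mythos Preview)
Your proposal is correct and matches the paper's approach exactly: the paper does not give a standalone proof but simply defers to \cite[Theorem~1]{ryu2020uniqueness}, noting that ``the proof of that result relies only on subdifferential operators of proper, closed, convex functions, [so] the same parameterization applies in our setting.'' Your write-up is in fact more explicit than the paper's, which outsources both the definitions and the argument entirely to \cite{ryu2020uniqueness}.
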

The fixed-point encoding property ensures that solutions of the inclusion problem \eqref{eq::main_resolvent_problem} correspond exactly to fixed points of \cref{alg::main}. Since unconditional convergence is defined in terms of convergence to such fixed points, we begin by making the fixed-point set of \cref{alg::main} explicit.

\begin{definition}[$\Fix(A, B, \alpha, \beta)$] \label{def::fix}
    For any $(A, B, \alpha, \beta) \in \mathcal{A}^2 \times \R_{++}^2$, let 
    $$\Fix(A, B, \alpha, \beta) \coloneqq \left\{(x^\star, x^\star, z^\star) \in \Hil^3 \mid x^\star = J_{\alpha A}(z^\star) = J_{\beta B}\left(\left(1+\frac{\beta}{\alpha}\right)x^\star - \frac{\beta}{\alpha}z^\star\right) \right\}.$$ 
\end{definition}
For ease of reference, we also explicitly state the equivalence between the solution set of the inclusion \eqref{eq::main_resolvent_problem} and the fixed-point set of \cref{alg::main}.
In particular, this shows that whenever $(x^\star,x^\star,z^\star)\in\Fix(A,B,\alpha,\beta)$, the point $x^\star$ must be a solution of the corresponding inclusion \eqref{eq::main_resolvent_problem}.
\begin{proposition} \label{prp::fix_point_prp}
    Let $(A, B, \alpha, \beta) \in \mathcal{A}^2 \times \R_{++}^2$ and $x^\star \in \Hil$. Then, $x^\star \in \zer(A+B)$ if and only if there exists $z^\star \in \Hil$ such that $(x^\star, x^\star, z^\star) \in \Fix(A, B, \alpha, \beta)$.
\end{proposition}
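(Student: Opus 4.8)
The plan is to unwind \cref{def::fix} directly through the definition of the resolvent, $J_C = (C + \Id)^{-1}$. The key auxiliary fact I would invoke is that the resolvent of a maximally monotone operator is single-valued (\cite[Corollary~23.9]{BauschkeCombettes2017}), which lets me pass freely between a resolvent equation and its defining inclusion: for any $w \in \Hil$ and maximally monotone $C$, one has $x^\star = J_{\gamma C}(w)$ if and only if $\tfrac{1}{\gamma}(w - x^\star) \in C(x^\star)$. With this translation in hand, the statement becomes a short algebraic exercise in both directions.

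First I would prove the ``if'' direction. Assume $(x^\star, x^\star, z^\star) \in \Fix(A,B,\alpha,\beta)$. The equation $x^\star = J_{\alpha A}(z^\star)$ gives $\tfrac{1}{\alpha}(z^\star - x^\star) \in A(x^\star)$. The equation $x^\star = J_{\beta B}\big((1+\tfrac{\beta}{\alpha})x^\star - \tfrac{\beta}{\alpha}z^\star\big)$ gives $\tfrac{1}{\beta}\big((1+\tfrac{\beta}{\alpha})x^\star - \tfrac{\beta}{\alpha}z^\star - x^\star\big) \in B(x^\star)$, and here the $\beta/\alpha$ factor cancels, leaving exactly $\tfrac{1}{\alpha}(x^\star - z^\star) \in B(x^\star)$. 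Adding the two inclusions yields $0 \in A(x^\star) + B(x^\star)$, i.e.\ $x^\star \in \zer(A+B)$.

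For the converse, assume $x^\star \in \zer(A+B)$, so there is $u \in A(x^\star)$ with $-u \in B(x^\star)$. I would set $z^\star \coloneqq x^\star + \alpha u$. Then $\tfrac{1}{\alpha}(z^\star - x^\star) = u \in A(x^\star)$, hence $x^\star = J_{\alpha A}(z^\star)$. Moreover $(1+\tfrac{\beta}{\alpha})x^\star - \tfrac{\beta}{\alpha}z^\star = x^\star + \tfrac{\beta}{\alpha}(x^\star - z^\star) = x^\star + \beta(-u)$, and since $-u \in B(x^\star)$ this gives $x^\star = J_{\beta B}\big((1+\tfrac{\beta}{\alpha})x^\star - \tfrac{\beta}{\alpha}z^\star\big)$. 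Therefore $(x^\star, x^\star, z^\star) \in \Fix(A,B,\alpha,\beta)$.

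There is no real obstacle here: the proof is essentially the classical Douglas--Rachford fixed-point encoding argument adapted to unequal step sizes. The only points deserving a little care are the systematic use of single-valuedness of resolvents to move between resolvent equations and inclusions, and the algebraic simplification of the argument of $J_{\beta B}$, where the $\beta/\alpha$ weights must be tracked correctly so that the two resulting inclusions are negatives of one another at $x^\star$.
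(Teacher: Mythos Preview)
Your proof is correct and follows essentially the same approach as the paper: both arguments translate the resolvent equations in \cref{def::fix} into the pair of inclusions $\tfrac{1}{\alpha}(z^\star - x^\star)\in A(x^\star)$ and $\tfrac{1}{\alpha}(x^\star - z^\star)\in B(x^\star)$ via the identity $x^\star = J_{\gamma C}(w) \iff \tfrac{1}{\gamma}(w - x^\star)\in C(x^\star)$, and observe that these are negatives of one another. The only cosmetic difference is that the paper presents the argument as a single chain of equivalences, whereas you split it into two directions and make the choice $z^\star = x^\star + \alpha u$ explicit in the converse.
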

\begin{proof}
    We have that $x^\star \in \zer(A+B)$ if and only if there exists some $z^\star \in \Hil$ such that
    \begin{equation*}
    \begin{aligned}
        &\begin{cases}
            \frac{1}{\alpha}(z^\star - x^\star) &\in A(x^\star)\\
            \frac{1}{\alpha}(x^\star - z^\star) &\in B(x^\star)
        \end{cases} \\
        \iff &\begin{cases}
            \frac{1}{\alpha}(z^\star - x^\star) &\in A(x^\star)\\
            \frac{1}{\beta}\left( \left( 1 + \frac{\beta}{\alpha}\right)x^\star - \frac{\beta}{\alpha}z^\star - x^\star \right) &\in B(x^\star)
        \end{cases} \\
        \iff &\begin{cases}
            x^\star &= J_{\alpha A}(z^\star) \\
            x^\star &= J_{\beta B}\left(\left(1+\frac{\beta}{\alpha}\right)x^\star - \frac{\beta}{\alpha}z^\star\right)
        \end{cases} \\
        \iff &(x^\star, x^\star, z^\star) \in \Fix(A, B, \alpha, \beta).
    \end{aligned}
    \end{equation*}
\end{proof}

We now formalize what it means for \cref{alg::main} to converge unconditionally for a given parameter choice over a subset of operator pairs from $\mathcal{A}$, the class of maximally monotone operators introduced in \eqref{eq::max_mono_set}.

\begin{definition}\label{def::unconditional}
Let $\mathcal{C} \subset \mathcal{A} \times \mathcal{A}$ and $(\alpha,\beta,\theta) \in \R_{++}^2 \times (\R\setminus\{0\})$.
We say that \cref{alg::main} \textbf{converges unconditionally for $(\alpha,\beta,\theta)$ over $\mathcal{C}$} if, for every $z^0\in\Hil$ and every $(A,B)\in\mathcal{C}$ with $\zer(A+B)\neq\emptyset$, the sequence $\bigl((x_1^k,x_2^k,z^k)\bigr)_{k\in\N}$ generated by \cref{alg::main} applied to $(A, B, z^0, \alpha, \beta, \theta)$ converges weakly to a point $(x^\star,x^\star,z^\star)\in\Fix(A, B, \alpha, \beta)$.
\end{definition}

We now introduce a convenient notation for describing the set of parameters in \cref{alg::main} that guarantee unconditional convergence for a given class of operator pairs. This will allow us to state our results in a compact form.

\begin{definition}[$S(\mathcal{C})$]
    Let $\mathcal{C} \subset \mathcal{A} \times \mathcal{A}$. Define $$S(\mathcal{C}) \coloneqq \{(\alpha, \beta, \theta) \in \R^2_{++} \times (\R\setminus\{0\}) \mid \cref{alg::main} \text{ converges unconditionally for } (\alpha, \beta, \theta) \text{ over } \mathcal{C}\}.$$
\end{definition}

With this notation in place, we can state the following key result from \cite[Theorem~2]{ryu2020uniqueness}.

\begin{theorem}[\cite{ryu2020uniqueness} Theorem 2]\label{thm::ryu}
   If $\dim \mathcal{H} \geq 2$, then \cref{alg::main} converges unconditionally for $(\alpha, \beta, \theta) \in \R^2_{++} \times (\R \setminus \{0\})$ over $\mathcal{A} \times \mathcal{A}$ if and only if $\alpha = \beta$ and $\theta \in (0, 2)$, i.e., $$S(\mathcal{A} \times \mathcal{A}) = \R_{++}\{(1, 1)\} \times (0, 2).$$
\end{theorem}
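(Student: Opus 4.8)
The plan is to prove the claimed equality by establishing the two inclusions separately, handling sufficiency of $\alpha=\beta,\ \theta\in(0,2)$ via the classical Douglas--Rachford theory and necessity via explicit one- and two-dimensional counterexamples. For \emph{sufficiency}, observe that when $\alpha=\beta=:\gamma$, \cref{alg::main} collapses to the relaxed Douglas--Rachford iteration: writing $R_{\gamma A}:=2J_{\gamma A}-\Id$ and $R_{\gamma B}:=2J_{\gamma B}-\Id$ (nonexpansive, since resolvents are firmly nonexpansive), the update becomes $z^{k+1}=(1-\tfrac{\theta}{2})z^k+\tfrac{\theta}{2}(R_{\gamma B}R_{\gamma A})z^k$, i.e.\ a Krasnoselskii--Mann iteration of the nonexpansive operator $R_{\gamma B}R_{\gamma A}$ with constant relaxation $\tfrac{\theta}{2}\in(0,1)$. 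By \cref{prp::fix_point_prp}, $\zer(A+B)\neq\emptyset$ is equivalent to $\Fix(A,B,\gamma,\gamma)\neq\emptyset$, and a direct check shows $z^\star\in\Fix(R_{\gamma B}R_{\gamma A})$ iff $(J_{\gamma A}z^\star,J_{\gamma A}z^\star,z^\star)\in\Fix(A,B,\gamma,\gamma)$. The standard convergence theory of Douglas--Rachford splitting~\cite{BauschkeCombettes2017} then yields $z^k\weakto z^\star\in\Fix(R_{\gamma B}R_{\gamma A})$, $z^{k+1}-z^k\to 0$ strongly, and weak convergence of the shadow sequence $x_1^k=J_{\gamma A}z^k\weakto x^\star:=J_{\gamma A}z^\star\in\zer(A+B)$; since $x_2^k-x_1^k=\tfrac{1}{\theta}(z^{k+1}-z^k)\to 0$, also $x_2^k\weakto x^\star$, and the fixed-point relation places $(x^\star,x^\star,z^\star)$ in $\Fix(A,B,\gamma,\gamma)$, which is exactly what \cref{def::unconditional} demands. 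Hence $\R_{++}\{(1,1)\}\times(0,2)\subseteq S(\mathcal A\times\mathcal A)$.

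For \emph{necessity}, the complement of $\R_{++}\{(1,1)\}\times(0,2)$ in $\R_{++}^2\times(\R\setminus\{0\})$ is the union of $\{\alpha=\beta,\ \theta\notin(0,2)\}$ and $\{\alpha\neq\beta\}$, and I would exhibit a failing instance in each. If $\alpha=\beta$ and $\theta\notin(0,2)$, take a one-dimensional subspace of $\Hil$ and $f=\iota_{\{0\}}$, $g=0$, so $A=\partial f$, $B=\partial g=0$, $\zer(A+B)=\{0\}$, $J_{\alpha A}\equiv 0$, $J_{\beta B}=\Id$; \cref{alg::main} then produces $x_1^k=0$, $x_2^k=-z^k$, $z^{k+1}=(1-\theta)z^k$, which for $z^0\neq 0$ fails to converge to $0$ whenever $|1-\theta|\geq 1$, i.e.\ $\theta\le 0$ or $\theta\ge 2$. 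If $\alpha\neq\beta$, this is where $\dim\Hil\ge 2$ enters: restrict to a two-dimensional subspace, identify it with $\C$, and take $A=S_{\omega_1}$, $B=S_{\omega_2}$, where $S_\omega$ is the skew-symmetric rotation generator acting on $\C$ as multiplication by $i\omega$. Each $S_\omega$ is maximally monotone, $0\in\zer(A+B)$ (with $0$ the unique zero when $\omega_1+\omega_2\neq 0$), and $J_{\alpha A},J_{\beta B}$ act as multiplication by $(1+i\alpha\omega_1)^{-1},(1+i\beta\omega_2)^{-1}$, so \cref{alg::main} is linear: $z^{k+1}=\mu(\alpha,\beta,\theta,\omega_1,\omega_2)\,z^k$ with $\mu\in\C$.

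Unconditional convergence would force $z^k\to 0$ for every $z^0$. Since $\mu=1$ at $\omega_1=\omega_2=0$, the crux is a second-order expansion of $|\mu|^2-1$ in $(\alpha\omega_1,\beta\omega_2)$: composing the two complex resolvent maps through the affine combination $(1+\tfrac{\beta}{\alpha})x_1-\tfrac{\beta}{\alpha}z$ and simplifying, one obtains a quadratic form $x^\top Mx$ whose symmetric matrix $M$ satisfies $\det M=-\theta^2(\tfrac{\beta}{\alpha}-1)^2$. Thus $M$ is indefinite exactly when $\alpha\neq\beta$, in which case an arbitrarily small $(\omega_1,\omega_2)$ — which can moreover be taken with $\omega_1+\omega_2\neq 0$ — gives $|\mu|>1$, so $z^k=\mu^k z^0$ diverges for $z^0\neq 0$; this rules out unconditional convergence for \emph{every} $\theta\neq 0$ when $\alpha\neq\beta$, completing the reverse inclusion. (When $\alpha=\beta$ the determinant vanishes and, for $\theta\in(0,2)$, $M$ is negative semidefinite since $\operatorname{tr}M=2\theta(\theta-2)<0$, consistent with the sufficiency direction.) The main obstacle is precisely this last computation: one must carry out the composition of the two resolvent maps carefully enough to isolate the quadratic form and recognize the clean factorization $\det M=-\theta^2(\tfrac{\beta}{\alpha}-1)^2$, which is what makes both the $\alpha\neq\beta$ divergence (for all $\theta$) and the $\alpha=\beta$ boundary behavior fall out at once. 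The remaining ingredients — maximal monotonicity and the resolvent formulas for $S_\omega$ and $\partial\iota_{\{0\}}$, and the nonexpansiveness bookkeeping in the sufficiency part — are routine.
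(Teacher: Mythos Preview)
The paper does not prove this theorem; it is quoted verbatim from \cite{ryu2020uniqueness} as background for the paper's own extension to the convex-optimization setting, so there is no in-paper proof to compare against. That said, your plan is correct and essentially reconstructs Ryu's original argument.

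A few remarks. The sufficiency direction is exactly the averaged-operator reading of relaxed Douglas--Rachford that the paper itself recalls in \cref{sec::operator}; the only delicate step is the shadow-sequence convergence $J_{\gamma A}z^k\weakto J_{\gamma A}z^\star$, which is genuinely nontrivial (it is Svaiter's result, recorded in \cite{BauschkeCombettes2017}), and you correctly defer it to that reference. For necessity with $\alpha=\beta$ and $\theta\notin(0,2)$, your example is identical to one of the two in the paper's \cref{prp::counters}. For $\alpha\neq\beta$, your rotation-operator computation is right, and in fact stronger than you state: with $a=\alpha\omega_1$, $b=\beta\omega_2$, $p=1-\theta\beta/\alpha$, $q=1-\theta$, one has the \emph{exact} identity
\[
|\mu|^2-1=\frac{(p^2-1)a^2+2(pq-1)ab+(q^2-1)b^2}{(1+a^2)(1+b^2)},
\]
so no second-order truncation is needed, and $\det M=-(p-q)^2=-\theta^2(\beta/\alpha-1)^2<0$ gives indefiniteness of $M$, hence $|\mu|>1$ for suitable small $(\omega_1,\omega_2)$, for every $\theta\neq 0$ when $\alpha\neq\beta$. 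Two small points to spell out in a full write-up: extend $S_{\omega_i}$ from the chosen two-dimensional subspace to all of $\Hil$ by zero on the orthogonal complement (this preserves maximal monotonicity and keeps the iteration in the subspace); and note that $0\in\zer(A+B)$ always, so the solvability hypothesis in \cref{def::unconditional} is met regardless of whether $\omega_1+\omega_2=0$.
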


Before stating our main result in \cref{thm::main_theorem}, we introduce the following two parameter sets, whose union will be shown to constitute the full region of unconditional convergence in the convex optimization setting. This division is used because our proof treats the two cases separately. The extent to which this enlarges the admissible parameter region compared with the maximal monotone setting of \cite{ryu2020uniqueness} is illustrated in \cref{fig::regions_S1_S2}.

\begin{definition}[$S^{(i)}$]
    Let $S^{(1)}, S^{(2)} \subset \R^3$ be defined by
    \begin{align*}
        S^{(1)} &\coloneqq \left\{(\alpha, \beta, \theta) \in \R_{++}^3 \mid \alpha/\beta \in (0, 1], \ \theta \in (0, 2\alpha/\beta) \right\}, \\
        S^{(2)} &\coloneqq \left\{(\alpha, \beta, \theta) \in \R_{++}^3 \mid \alpha/\beta \in [1, +\infty), \ \theta \in (0, 2) \right\}.
    \end{align*}
\end{definition}

We are now ready to state our main result.

\begin{theorem} \label{thm::main_theorem}
    \cref{alg::main} converges unconditionally for $(\alpha, \beta, \theta) \in \R^2_{++} \times (\R \setminus \{0\})$ over $\partial\mathcal{F} \times \partial\mathcal{F}$ if and only if $\theta  \in (0, \min \{2, 2\alpha/\beta\})$. 
    Moreover,
    $$S(\partial\mathcal{F} \times \partial\mathcal{F}) = S(\partial \mathcal{F}\times \mathcal{A}) = S^{(1)}  \cup S^{(2)}.$$
\end{theorem}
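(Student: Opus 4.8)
I would first note that
\[
S^{(1)}\cup S^{(2)}=\bigl\{(\alpha,\beta,\theta)\in\R_{++}^2\times(\R\setminus\{0\})\ :\ \theta\in(0,\min\{2,2\alpha/\beta\})\bigr\},
\]
since $\alpha/\beta\le 1$ makes $\min\{2,2\alpha/\beta\}=2\alpha/\beta$ (the cut defining $S^{(1)}$) and $\alpha/\beta\ge 1$ makes it $2$ (the cut defining $S^{(2)}$), while every pair $(\alpha,\beta)$ satisfies at least one of those inequalities. Thus the first assertion of \cref{thm::main_theorem} is the instance $\mathcal{C}=\partial\mathcal{F}\times\partial\mathcal{F}$ of the displayed set equalities, and it suffices to establish the chain
\[
S^{(1)}\cup S^{(2)}\ \subseteq\ S(\partial\mathcal{F}\times\mathcal{A})\ \subseteq\ S(\partial\mathcal{F}\times\partial\mathcal{F})\ \subseteq\ S^{(1)}\cup S^{(2)} .
\]
The middle inclusion is free: $\partial\mathcal{F}\times\partial\mathcal{F}\subseteq\partial\mathcal{F}\times\mathcal{A}$ because $\partial\mathcal{F}\subseteq\mathcal{A}$, and $\mathcal{C}\mapsto S(\mathcal{C})$ is inclusion-reversing since a parameter triple that forces convergence over a larger class of operator pairs certainly forces it over any subclass. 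What remains is sufficiency, $S^{(1)}\cup S^{(2)}\subseteq S(\partial\mathcal{F}\times\mathcal{A})$, and necessity, $S(\partial\mathcal{F}\times\partial\mathcal{F})\subseteq S^{(1)}\cup S^{(2)}$.

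\textbf{Necessity.} Given $(\alpha,\beta,\theta)\in\R_{++}^2\times(\R\setminus\{0\})$ outside $S^{(1)}\cup S^{(2)}$, I would produce a pair in $\partial\mathcal{F}\times\partial\mathcal{F}$ violating \cref{def::unconditional}. Two scalar instances suffice. With $f\equiv 0$ and $g=\iota_{\{0\}}$ one has $\prox_{\alpha f}=\Id$ and $\prox_{\beta g}\equiv 0$, so $x_1^k=z^k$, the argument of the second proximal step reduces to $z^k$, $x_2^k=0$, and hence $z^{k+1}=(1-\theta)z^k$; since $\zer(\partial f+\partial g)=\{0\}$ and the unique fixed point has $z^\star=0$, this converges for every $z^0$ if and only if $|1-\theta|<1$, i.e.\ $\theta\in(0,2)$. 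Exchanging the roles, $f=\iota_{\{0\}}$ and $g\equiv 0$ give $x_1^k=0$, $x_2^k=-(\beta/\alpha)z^k$, hence $z^{k+1}=(1-\theta\beta/\alpha)z^k$, which converges for every $z^0$ if and only if $\theta\in(0,2\alpha/\beta)$. Both pairs belong to $\partial\mathcal{F}\times\partial\mathcal{F}$ for the given $\Hil$ (whatever $\dim\Hil\ge1$), so unconditional convergence forces $\theta\in(0,2)\cap(0,2\alpha/\beta)=(0,\min\{2,2\alpha/\beta\})$, i.e.\ $(\alpha,\beta,\theta)\in S^{(1)}\cup S^{(2)}$.

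\textbf{Sufficiency: the Lyapunov step (main obstacle).} Fix $(\alpha,\beta,\theta)\in S^{(1)}\cup S^{(2)}$, a pair $(\partial f,B)\in\partial\mathcal{F}\times\mathcal{A}$ with $\zer(\partial f+B)\neq\emptyset$, an initial point $z^0$, and a fixed point $(x^\star,x^\star,z^\star)\in\Fix(\partial f,B,\alpha,\beta)$. Along the iterates, set $u_1^k\coloneqq\alpha^{-1}(z^k-x_1^k)\in\partial f(x_1^k)$ and $v_2^k\coloneqq\beta^{-1}\bigl((1+\tfrac{\beta}{\alpha})x_1^k-\tfrac{\beta}{\alpha}z^k-x_2^k\bigr)\in B(x_2^k)$, and let $u^\star\coloneqq\alpha^{-1}(z^\star-x^\star)$, which satisfies $u^\star\in\partial f(x^\star)$ and $-u^\star\in B(x^\star)$ by the computation in the proof of \cref{prp::fix_point_prp}. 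The nonnegative quantities available are then the Bregman-type distances $D_f(x^\star,x_1^k,u_1^k)\ge0$ and $D_f(x_1^k,x^\star,u^\star)\ge0$ from \cref{lem::Dh_nonnegative}, together with the monotonicity residual $\langle v_2^k+u^\star,\ x_2^k-x^\star\rangle\ge0$ of $B$; crucially no $D_g$ term is needed, which is exactly why the argument will cover $\partial\mathcal{F}\times\mathcal{A}$ and not merely $\partial\mathcal{F}\times\partial\mathcal{F}$. The plan is, on each of the two regions, to exhibit a Lyapunov function $V^{(i)}_k$ — of Fej\'er type in $\lVert z^k-z^\star\rVert^2$, possibly augmented by a one-step memory term proportional to a Bregman distance at iteration $k-1$ — and nonnegative multipliers so that $V^{(i)}_{k+1}\le V^{(i)}_k-R^{(i)}_k$, where $R^{(i)}_k$ is a nonnegative combination of $\lVert x_2^k-x_1^k\rVert^2$, the two $D_f$ distances, and the $B$-residual; this combination should be genuinely nonnegative precisely on $S^{(1)}$ (resp.\ $S^{(2)}$) and degenerate to the classical relaxed Douglas--Rachford certificate on the overlap $S^{(1)}\cap S^{(2)}=\R_{++}\{(1,1)\}\times(0,2)$. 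Producing these certificates is the crux: after substituting $z^k=x_1^k+\alpha u_1^k$, $(1+\tfrac{\beta}{\alpha})x_1^k-\tfrac{\beta}{\alpha}z^k=x_2^k+\beta v_2^k$, and $z^{k+1}=z^k+\theta(x_2^k-x_1^k)$, the required decrease becomes a quadratic inequality in the iterate differences and subgradients that must follow from the listed nonnegativity constraints — an $S$-procedure / sum-of-squares feasibility problem — for which I would search the multipliers with the automated Lyapunov tools of \cite{UpadhyayaBanertTaylorGiselsson2025AutoLyapTight,UpadhyayaTaylorBanertGiselsson2025AutoLyap} and then certify the resulting identity by hand.

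\textbf{Sufficiency: from the Lyapunov inequality to weak convergence.} Given $V^{(i)}_{k+1}\le V^{(i)}_k-R^{(i)}_k$, telescoping yields $\sum_{k}R^{(i)}_k<\infty$, hence $x_2^k-x_1^k\to0$, $D_f(x^\star,x_1^k,u_1^k)+D_f(x_1^k,x^\star,u^\star)\to0$ and $\langle v_2^k+u^\star,x_2^k-x^\star\rangle\to0$, together with boundedness of $(V^{(i)}_k)$, so $\lVert z^k-z^\star\rVert$ converges and $(z^k)$ is bounded. A standard Douglas--Rachford-type argument — Opial's lemma on $\Fix(\partial f,B,\alpha,\beta)$ combined with the weak--strong graph closedness of $\partial f$ and $B$ along a weakly convergent subnet of $(z^k)$ — then shows every weak cluster point of $(z^k)$ lies in $\Fix(\partial f,B,\alpha,\beta)$, and with the Fej\'er-type decrease of $V^{(i)}$ this forces $z^k\weakto z^\star$ and $x_1^k,x_2^k\weakto x^\star$ for a single $(x^\star,x^\star,z^\star)\in\Fix(\partial f,B,\alpha,\beta)$, which is precisely \cref{def::unconditional}. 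Ensuring that the (possibly memory-augmented) $V^{(i)}$ still supports the Opial argument, and that the weak-limit identification goes through using only monotonicity of $B$, is the second place where care is required. Chaining the three inclusions then yields both set identities and, as its $\partial\mathcal{F}\times\partial\mathcal{F}$ instance, the stated equivalence.
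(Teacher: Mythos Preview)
Your proposal is correct and takes the same overall route as the paper: the identical chain of inclusions, the same two scalar counterexamples $(f,g)=(0,\iota_{\{0\}})$ and $(\iota_{\{0\}},0)$ for necessity, and a Lyapunov analysis for sufficiency that uses only convexity of $f$ (via Bregman terms) together with bare monotonicity of $B$, which is exactly why the argument reaches $\partial\mathcal{F}\times\mathcal{A}$. The execution differs from your sketch precisely at the two spots you already flag as delicate: the paper's $V_k^{(i)}$ is not $\lVert z^k-z^\star\rVert^2$ augmented by a Bregman term but a full quadratic form in $(x_1^k-x^\star,\,x_2^k-x^\star,\,z^k-z^\star)$ plus a Bregman term, the descent is an \emph{equality} $V_{k+1}^{(i)}=V_k^{(i)}-R_k^{(i)}-\theta\alpha I_k$ with a separate nonnegative monotonicity aggregate $I_k$ that couples iterations $k$ and $k{+}1$, and --- since $\lVert z^k-z^\star\rVert$ is never shown to converge --- Opial is replaced by proving that the quadratic part $Q_k^{(i)}$ converges for every fixed point and then running a polarization argument on the underlying matrix $Q_V^{(i)}$ to force uniqueness of weak cluster points.
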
 
Note that this result applies not only to the convex optimization setting but also to the mixed case in which 
$A$ is a subdifferential operator of a proper, closed, and convex function while $B$ is an arbitrary maximally monotone operator.

In contrast to the maximal monotone setting, no additional assumptions on the dimension of $\Hil$ are needed when restricting to $\partial\mathcal{F} \times \partial\mathcal{F}$ or $\partial\mathcal{F} \times \mathcal{A}$. In particular, \cref{thm::main_theorem} also covers the one-dimensional case. In light of the fact that every maximally monotone operator on $\R$
is the subdifferential of a proper, closed, and convex function (see \cite[Corollary~22.23]{BauschkeCombettes2017}) and since any one-dimensional $\Hil$ is isometrically isomorphic to $\R$, with a mapping that preserves maximal monotonicity and subdifferentials, \cref{thm::main_theorem} allows us to revisit \cite[Theorem~2]{ryu2020uniqueness} and close the gap left open in the case $\dim\Hil=1$. 

\begin{corollary}\label{cor::main_corollary}
    If $\Hil = \R$, then $S(\mathcal{A} \times \mathcal{A}) = S(\partial \mathcal{F} \times \partial \mathcal{F}) = S^{(1)} \cup S^{(2)}$.
\end{corollary}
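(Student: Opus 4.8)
The plan is to obtain this as a direct consequence of \cref{thm::main_theorem} together with the structural fact that, over the real line, maximal monotonicity and subdifferentiability coincide. First I would invoke \cite[Corollary~22.23]{BauschkeCombettes2017}: when $\Hil = \R$, every maximally monotone operator $A:\R\rightrightarrows\R$ is of the form $A = \partial f$ for some $f\in\mathcal{F}$. Combined with the inclusion $\partial\mathcal{F}\subset\mathcal{A}$ recorded after \eqref{eq::max_mono_set}, this upgrades that inclusion to the set equality $\mathcal{A} = \partial\mathcal{F}$ for $\Hil = \R$, and hence $\mathcal{A}\times\mathcal{A} = \partial\mathcal{F}\times\partial\mathcal{F}$ as families of operator pairs.

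Next I would observe that $S(\mathcal{C})$ depends on $\mathcal{C}$ only through the underlying family of operator pairs, with the ambient space $\Hil = \R$ fixed throughout; in particular the weak-convergence and fixed-point notions in \cref{def::unconditional} are unchanged if the operator class is replaced by an equal one. Thus the set equality of the previous step gives $S(\mathcal{A}\times\mathcal{A}) = S(\partial\mathcal{F}\times\partial\mathcal{F})$ when $\Hil = \R$. Finally, since \cref{thm::main_theorem} is valid for every Hilbert space with $\dim\Hil\geq 1$, in particular for $\Hil = \R$, it yields $S(\partial\mathcal{F}\times\partial\mathcal{F}) = S^{(1)}\cup S^{(2)}$; chaining the two equalities completes the proof.

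I do not expect a genuine obstacle: the statement is essentially a one-line corollary of \cref{thm::main_theorem}. The only points meriting care are (i) confirming that the cited one-dimensional characterization produces the set equality $\mathcal{A} = \partial\mathcal{F}$ and not merely an inclusion, and (ii) noting that $S(\cdot)$ is insensitive to substituting an equal operator class, so no re-examination of the fixed-point-encoding machinery is required. If one instead wanted the conclusion for an arbitrary one-dimensional $\Hil$ rather than $\Hil=\R$, one would additionally use that such an $\Hil$ is isometrically isomorphic to $\R$ and that this isomorphism preserves maximal monotonicity, subdifferentials, and resolvents, hence commutes with the iteration of \cref{alg::main}; for $\Hil = \R$ this extra step is unnecessary.
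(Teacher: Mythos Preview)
Your proposal is correct and follows essentially the same approach as the paper: combine \cref{thm::main_theorem} with the fact that $\mathcal{A}=\partial\mathcal{F}$ when $\Hil=\R$ via \cite[Corollary~22.23]{BauschkeCombettes2017}. The paper's proof is the one-line version of what you wrote.
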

\begin{proof}
    Combine \cref{thm::main_theorem} with the fact that when $\Hil = \R$, then $\mathcal{A} = \partial \mathcal{F}$, see \cite[Corollary~22.23]{BauschkeCombettes2017}.
\end{proof}

Before we proceed to prove \cref{thm::main_theorem}, we note that the iteration in \cref{alg::main} has been analyzed before. In particular, in \cite{DaoPhan2019AdaptiveDRS} it was written as
\begin{align}
    z^{k+1} = T_{A,B}^{\alpha,\beta,\theta}(z^k) \qquad\quad\text{where}\quad\qquad  T_{A,B}^{\alpha,\beta,\theta} \coloneqq \left(1-\frac{\theta\beta}{\alpha+\beta}\right) \Id + \frac{\theta\beta}{\alpha+\beta}R_{B}^{\beta,\alpha}R_A^{\alpha,\beta},
    \label{eq::alg_composition_rem}
\end{align}
and $R_{A}^{\alpha,\beta}:\Hil\to\Hil$ and $R_{B}^{\beta,\alpha}:\Hil\to\Hil$ are \emph{relaxed resolvents} defined as
\begin{align}
    R_{A}^{\alpha,\beta}  \coloneqq \left(1+\frac{\beta}{\alpha}\right)J_{\alpha A}-\frac{\beta}{\alpha}\Id \qquad\quad{\text{and}}\qquad\quad R_{B}^{\beta,\alpha}  \coloneqq \left(1+\frac{\alpha}{\beta}\right)J_{\beta B}-\frac{\alpha}{\beta}\Id.
    \label{eq::reflected_res_rem}
\end{align}
This formulation has been analyzed, e.g., in \cite{DaoPhan2019AdaptiveDRS,GiselssonMoursi2021Compositions,BartzDaoPhan2022ConicalAveragedness,Dao_2025} in different settings than ours.
Under the parameter restrictions in \cref{thm::main_theorem}, we have $\frac{\theta\beta}{\alpha+\beta}\in(0,1)$, so \eqref{eq::alg_composition_rem} is a $\frac{\theta\beta}{\alpha+\beta}$-averaged iteration of the composition $R_{B}^{\beta,\alpha}R_A^{\alpha,\beta}$. 
It turns out that $T_{A,B}^{\alpha,\beta,\theta}$ is an averaged operator \cite[Definition~4.33]{BauschkeCombettes2017} for all $(A,B)\in\mathcal{A}\times\mathcal{A}$ if and only if $\alpha=\beta$---in which case the two relaxed resolvents are nonexpansive---and it can fail to be averaged even for the subclass $\partial\mathcal{F}\times\partial\mathcal{F}$ unless $\alpha=\beta$. Consequently, in contrast to the standard Douglas--Rachford case, the proof of \cref{thm::main_theorem} cannot rely on averaged-operator theory and instead makes use of a Lyapunov analysis, as developed in \cref{sec::Convergence Analysis}.
\section{Convergence Analysis} \label{sec::Convergence Analysis}
This section is devoted to the proof of \cref{thm::main_theorem}.
In \cref{sec::Lyapunov_analysis}, we establish sufficiency of the parameter conditions by means of a Lyapunov analysis, showing that
\begin{align*}
S^{(1)} \cup S^{(2)} \subset S(\partial\mathcal{F}\times \mathcal{A}) \subset S(\partial\mathcal{F}\times \partial\mathcal{F}).
\end{align*}
In \cref{sec::Counter_Examples}, we prove necessity by constructing explicit counterexamples for which the algorithm does not converge outside these regions, thereby showing that
\begin{align*}
S(\partial\mathcal{F}\times \partial\mathcal{F}) \subset S^{(1)} \cup S^{(2)}.
\end{align*}
Finally, in \cref{sec::thm_proof}, we combine these results to conclude the proof of \cref{thm::main_theorem}.

\subsection{Lyapunov Analysis} \label{sec::Lyapunov_analysis}
The Lyapunov analysis will be carried out over the class $\partial \mathcal{F} \times \mathcal{A}$; that is, we consider inclusion problems of the form
$$\text{find } x\in \mathcal{H} \text{ such that } 0 \in \partial f(x) + B(x),$$
where $f\in \mathcal{F}$ and $B\in \mathcal{A}$. This includes inclusion problems over $\partial \mathcal{F} \times\partial \mathcal{F}$ as a special case. Throughout \cref{sec::Lyapunov_analysis} we will assume the following.
\begin{assumption}
    \label{ass::Lyapunov_assumption}
    Suppose that
    \begin{enumerate}
    [label=\textit{(\roman*})]
        \item      \label{ass::Lyapunov_assumption_i}
$(f, B) \in \mathcal{F} \times \mathcal{A}$ such that $\zer(\partial f + B) \neq \emptyset$,
        \item $(\alpha, \beta, \theta)\in \R_{++}^2 \times (\R \setminus \{0\})$.
    \end{enumerate}
\end{assumption}
Let us also define the sequences and points needed for the upcoming results.
\begin{definition}
    \label{def::algorithm_sequences}
    Under \cref{ass::Lyapunov_assumption},
    \begin{enumerate}
    [label=\textit{(\roman*})]
        \item let $(x^\star, x^\star, z^\star) \in \Fix(\partial f, B, \alpha, \beta)$, where $\Fix(\partial f, B, \alpha, \beta)$ is nonempty by
        \cref{prp::fix_point_prp} and \itemcref{ass::Lyapunov_assumption}{ass::Lyapunov_assumption_i}, and let $u^\star \coloneqq \frac{1}{\alpha}(z^\star - x^\star)$,
        \item let the sequence $\bigl((x^k_1, x^k_2, z^k)\bigr)_{k \in \N}$ be generated by \cref{alg::main} applied to $(\partial f, B, z^0, \alpha, \beta, \theta)$ for some initial point $z^0\in \Hil$,
        \item and let $(u_1^k)_{k\in \N}$
        and $(u_2^k)_{k\in \N}$ be sequences such that
        \begin{align*}
            u_1^k &= \frac{1}{\alpha}(z^k - x^k_1),\\
            u^k_2 &= \frac{1}{\beta}\left( \left( 1 + \frac{\beta}{\alpha}\right) x^k_1 - \frac{\beta}{\alpha}z^k - x^k_2\right),
        \end{align*}
        for all $k\in \N$.\\
    \end{enumerate}
\end{definition}
\begin{remark}
\label{rem::subdifferential}
   For the sequences of \cref{def::algorithm_sequences}, we have that  $u_1^k\in \partial f(x_1^k)$ and $u_2^k\in B(x_2^k)$ for all $k\in \N$, due to the algorithm update of \cref{alg::main}. We additionally have that $u^\star \in \partial f(x^\star)$ and $-u^\star \in B(x^\star)$.
\end{remark}

Next, we introduce the \textit{Lyapunov function} $V_k$ and the \textit{residual function} $R_k$, both of which contain Bregman-type distance terms $D_f$. 
Since $\ran J_{\alpha \partial f} = \dom \partial f \subset \dom f$, by \cite[Proposition~23.2(i), Proposition~16.4(i)]{BauschkeCombettes2017}, these Bregman-type distances are well-defined and nonnegative for all $k \in \N$ according to \cref{def::D_h,lem::Dh_nonnegative,def::algorithm_sequences}.

\begin{definition}[$Q^{(1)}_k, V_k, R_k$]\label{def::V_R}
   For the sequences of \cref{def::algorithm_sequences} and for all $k\in \N$, let $\tau \coloneqq \alpha/\beta$ and define
    \begin{align*}
        Q^{(1)}_k &\coloneqq  \norm{z^k-z^\star + \frac{\theta}{2}(x_2^k-x_1^k) + (\tau-1)(x_1^k-x^\star)}^2 + \frac{\theta(4\tau-\theta)}{4} \norm{x_2^k-x_1^k }^2 \\
        &\quad\; + \tau(1-\tau) \norm{x_1^k-x^\star}^2, \\
        V_k &\coloneqq Q^{(1)}_k + 2\alpha(1 - \tau)D_f(x^k_1, x^\star, u^\star), \\
        R_k &\coloneqq \theta(2\tau-\theta)\norm{x_1^k-x_2^k }^2 + (1-\tau)\norm{x_1^{k+1}-x_1^k}^2 + 2\alpha(1 - \tau)D_f(x^k_1, x^{k+1}_1, u^{k+1}_1).
        \end{align*}
\end{definition}     
Nonnegativity of $V_k$ and $R_k$ for all $(\alpha, \beta, \theta)\in S^{(1)}$ is immediate. The following lemma will be used in \cref{lemma::analysis} to show that $V_k$ and $R_k$ are nonnegative for $(\alpha, \beta, \theta)\in S^{(2)}$.

\begin{lemma} \label{lem::Lyapunov_def_alternative_2}
    For the sequences of \cref{def::algorithm_sequences} and for all $k\in \N$, let $\tau \coloneqq \alpha/\beta$ and let
        \begin{align*}
            Q^{(2)}_k &\coloneqq \norm{z^k-z^\star +\frac{\theta}{2}(x_2^k-x_1^k)}^2 +\frac{\theta(4\tau-\theta)}{4}\norm{x_2^k-x_1^k +  \frac{2(\tau-1)}{4\tau-\theta}(x_1^k-x^\star)}^2 \\
        &\quad\; + \frac{\tau(\tau-1)(4-\theta)}{4\tau-\theta}\norm{x_1^k-x^\star}^2.
        \end{align*}
    If $(\alpha, \beta, \theta)\in S^{(1)}\cup S^{(2)}$, then $V_k$ and $R_k$ satisfy
    \begin{absolutelynopagebreak}
    \begin{align}
          V_k &= Q^{(2)}_k + 2\alpha(\tau - 1) D_f(x^\star, x^k_1, u^k_1),\tag{\textbf{VR}} \label{eq::lem_equations}\\
          R_k &= (\tau-1)\norm{x_1^{k+1}-x_1^k + \theta(x_1^k-x_2^k)}^2 + \tau\theta(2-\theta)\norm{x_1^k-x_2^k}^2 + 2\alpha(\tau - 1)D_f(x^{k+1}_1, x^{k}_1, u^{k}_1)\notag
      \end{align}
      \end{absolutelynopagebreak}
        for all $k \in \N$.
\end{lemma}
\begin{proof}   
    Note first that if $(\alpha, \beta, \theta) \in S^{(1)}\cup S^{(2)}$, then $4\alpha/\beta>\theta$. This implies, in particular, that the definition of $Q_k^{(2)}$ never involves division by $0$. The equalities \eqref{eq::lem_equations} are shown in \cref{subsec::Proof_of_Lemma_Lyapunov} and symbolically in\footnote{\label{fn:shared}\url{https://github.com/MaxNilsson-phd/Symbolic_verification_of_Extending_Douglas--Rachford_Splitting_for_Convex_Optimization}}.
\end{proof}

\begin{remark} \label{rem::quadratic_form_remark}
    Note that for each $i\in\{1,2\}$ there exists a unique matrix $\bar{Q}_V^{(i)} \in \Sym^{3}$ such that the equality $$Q^{(i)}_k = \mathcal{Q}(\bar{Q}_V^{(i)}, (x^k_1 - x^\star, x^k_2 - x^\star, z^k - z^\star))$$ holds for all $k \in \N$. See \cref{appendix} for the explicit expression for $\bar{Q}_V^{(i)}$, lifted to $\R^{5 \times 5}$. Viewing $\bar{Q}^{(i)}_k$ as a quadratic form in this way will simplify the proof of \cref{prp::weak_convergence}.
\end{remark}

Besides the Lyapunov and residual functions, we need the following quantity in order to state the Lyapunov equality.
\begin{definition}[$I_k$]
\label{def::I_k}
For the sequences of \cref{def::algorithm_sequences} and for all $k\in \N$, define
    $$
    I_k\coloneqq \langle u_1^k-u^\star, x_1^k- x^\star\rangle + \langle u_2^k+u^\star, x_2^k- x^\star\rangle+ \langle u_1^{k+1}-u^\star, x_1^{k+1}- x^\star\rangle+ \langle u_2^{k+1}+u^\star, x_2^{k+1}- x^\star\rangle.
    $$ 
\end{definition}
\begin{lemma}
\label{lem::I_k}
    The quantity $I_k$ in \cref{def::I_k} is nonnegative for all $k \in \N$.
\end{lemma}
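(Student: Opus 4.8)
The statement is an immediate consequence of monotonicity once the correct subdifferential/operator memberships are recorded. First I would note the four membership facts. From \cref{def::us} we have $u_1^k\in\partial f(x_1^k)$ and $u_2^k\in B(x_2^k)$, and likewise $u_1^{k+1}\in\partial f(x_1^{k+1})$ and $u_2^{k+1}\in B(x_2^{k+1})$, for every $k\in\N$. Moreover, $u^\star=\tfrac1\alpha(z^\star-x^\star)\in\partial f(x^\star)$, again by \cref{def::us}. The only point requiring a line of justification is that $-u^\star\in B(x^\star)$: this follows from $(x^\star,x^\star,z^\star)\in\Fix(\partial f,B,\alpha,\beta)$ together with the equivalences established in the proof of \cref{prp::fix_point_prp}, which give $\tfrac1\alpha(x^\star-z^\star)\in B(x^\star)$, i.e.\ $-u^\star\in B(x^\star)$.

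With these memberships in hand, I would bound each of the four inner products in $I_k$ separately. For the two terms involving $\partial f$, monotonicity of $\partial f$ applied to the pairs $(x_1^k,u_1^k)$ and $(x^\star,u^\star)$ (respectively $(x_1^{k+1},u_1^{k+1})$ and $(x^\star,u^\star)$) yields
\[
\langle u_1^k-u^\star,\, x_1^k-x^\star\rangle\ge 0,
\qquad
\langle u_1^{k+1}-u^\star,\, x_1^{k+1}-x^\star\rangle\ge 0 .
\]
Alternatively, these two inequalities also follow from \cref{lem::Bregman_two_point} and \cref{lem::Dh_nonnegative}, since $\langle u_1^k-u^\star, x_1^k-x^\star\rangle = D_f(x_1^k,x^\star,u^\star)+D_f(x^\star,x_1^k,u_1^k)\ge 0$; I would mention this as a remark to keep the argument consistent with the Bregman-type machinery used elsewhere, but the monotonicity route suffices. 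For the two terms involving $B$, monotonicity of $B$ applied to the pairs $(x_2^k,u_2^k)$ and $(x^\star,-u^\star)$ (respectively $(x_2^{k+1},u_2^{k+1})$ and $(x^\star,-u^\star)$) yields
\[
\langle u_2^k+u^\star,\, x_2^k-x^\star\rangle\ge 0,
\qquad
\langle u_2^{k+1}+u^\star,\, x_2^{k+1}-x^\star\rangle\ge 0 .
\]
Summing the four nonnegative quantities gives $I_k\ge 0$.

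There is no real obstacle here; the lemma is a bookkeeping step isolating the nonnegative ``monotonicity residual'' that will later be recombined inside the Lyapunov identity. The only thing to be careful about is the sign convention on $u^\star$ versus $-u^\star$ for the two operators, which is why the term $\langle u_2^k+u^\star, x_2^k-x^\star\rangle$ appears with a plus sign: it is exactly the monotonicity inequality for $B$ between $x_2^k$ and $x^\star$.
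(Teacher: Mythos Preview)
Your proof is correct and follows essentially the same approach as the paper: both argue that each of the four inner products in $I_k$ is a monotonicity term---two coming from $\partial f$ at the pairs $(x_1^k,x^\star)$ and $(x_1^{k+1},x^\star)$, and two from $B$ at the pairs $(x_2^k,x^\star)$ and $(x_2^{k+1},x^\star)$---and hence nonnegative. Your write-up is slightly more explicit in recording the membership $-u^\star\in B(x^\star)$ and in offering the alternative Bregman-distance justification, but the underlying argument is identical.
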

\begin{proof}
    Let $k \in \N$. By \cref{rem::subdifferential} we get that each of the four inner products in $I_k$ is a monotonicity term; the first and third arise from the monotonicity of $\partial f$ evaluated at $(x_1^k,x^\star)$ and $(x_1^{k+1},x^\star)$, while the second and fourth arise from the monotonicity of $B$ evaluated at $(x_2^k,x^\star)$ and $(x_2^{k+1},x^\star)$. Therefore, $I_k\geq 0$.
\end{proof}

The following Lyapunov equality is the key component for establishing convergence of \cref{alg::main}.

\begin{proposition} \label{lemma::analysis}
     With $V_k$ and $R_k$ as in \cref{def::V_R} and $I_k$ as in \cref{def::I_k}, if $(\alpha, \beta, \theta) \in S^{(1)}\cup S^{(2)}$, then $V_k \geq 0$, $R_k \geq 0$, and the equality \begin{equation} \label{eq::lemma_equality}
     V_{k+1} = V_k - R_k -\theta\alpha I_k, \tag{\textbf{LE}}
     \end{equation} holds, for all $k \in \N$.
\end{proposition}
\begin{proof}
    The fact that $V_k$ and $R_k$, as defined in \cref{def::V_R}, both are nonnegative if $(\alpha, \beta, \theta)\in S^{(1)}$ follows immediately from their respective definitions, since all coefficients in front of the norms and the Bregman-type distances are nonnegative when $(\alpha, \beta, \theta) \in S^{(1)}$ and the Bregman-type distances are nonnegative by \cref{lem::Dh_nonnegative}.
    For $(\alpha, \beta, \theta)\in S^{(2)}$, nonnegativity follows from the same argument applied to the alternative expressions for $V_k$ and $R_k$ of \cref{lem::Lyapunov_def_alternative_2}. The Lyapunov equality \eqref{eq::lemma_equality} is verified in \cref{subsec::Proof_of_LE} and symbolically in\hyperref[fn:shared]{\textsuperscript{\ref{fn:shared}}}.
\end{proof}
The Lyapunov equality in \cref{lemma::analysis} allows us to draw the following conclusions.
\begin{proposition} \label{prp::analysis}
    With the sequences of \cref{def::algorithm_sequences}, $V_k$ and $R_k$ as in \cref{def::V_R} and $I_k$ as in \cref{def::I_k}, if $(\alpha, \beta, \theta) \in S^{(1)}\cup S^{(2)}$, then
    \begin{enumerate}[label=\textit{(\roman*})]
        \item $\left(R_k\right)_{k \in \N}$ and $\left(I_k\right)_{k\in \N}$ are summable,\label{prp::analysis_i}
        \item $x^k_1 - x^k_2 \to 0$ and $u^k_1 + u^k_2 \to 0$ as $k \to \infty$,\label{prp::analysis_ii}
        \item $(x^k_1)_{k \in \N}, (x^k_2)_{k \in \N}$, $(u_1^k)_{k\in \N}$, $(u_2^k)_{k\in \N}$ and $(z^k)_{k \in \N}$ are bounded,\label{prp::analysis_iii}
        \item $\left(Q_k^{(1)}\right)_{k \in \N}$ and $\left(Q_k^{(2)}\right)_{k\in\N}$ converge.\label{prp::analysis_iv}
    \end{enumerate}
\end{proposition}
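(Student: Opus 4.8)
The proof is, given \cref{lemma::analysis}, essentially a soft unwrapping of the Lyapunov equality \eqref{eq::lyap}. Throughout, write $\tau=\alpha/\beta$ and fix an $i\in\{1,2\}$ with $(\alpha,\beta,\theta)\in S^{(i)}$, so that $V_k^{(i)}\ge 0$, $R_k^{(i)}\ge 0$, $\theta\alpha>0$, and $I_k\ge 0$ by \cref{lem::I_k}. Since $R_k^{(i)}+\theta\alpha I_k\ge 0$, the identity \eqref{eq::lyap} shows that $(V_k^{(i)})_{k\in\N}$ is nonincreasing and bounded below by $0$, hence convergent; in particular $0\le Q_k^{(i)}\le V_k^{(i)}\le V_0^{(i)}<\infty$, so $(Q_k^{(i)})_{k\in\N}$ is bounded (this will be used for (iii)). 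For (i), I would rewrite \eqref{eq::lyap} as $V_k^{(i)}-V_{k+1}^{(i)}=R_k^{(i)}+\theta\alpha I_k$ and telescope, obtaining $\sum_{k=0}^{K-1}\bigl(R_k^{(i)}+\theta\alpha I_k\bigr)=V_0^{(i)}-V_K^{(i)}\le V_0^{(i)}$; both summands being nonnegative, $(R_k^{(i)})$ and $(I_k)$ are summable, and in particular both tend to $0$.

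For (ii): each $R_k^{(i)}$ contains the term $\theta(2\tau-\theta)\|x_1^k-x_2^k\|^2$ (for $i=1$) or $\tau\theta(2-\theta)\|x_1^k-x_2^k\|^2$ (for $i=2$), whose coefficient is strictly positive on $S^{(i)}$, so $R_k^{(i)}\to 0$ forces $x_1^k-x_2^k\to 0$. The convergence $u_1^k+u_2^k\to 0$ then follows from the algebraic identity $u_1^k+u_2^k=\tfrac1\beta(x_1^k-x_2^k)$, obtained by inserting the defining expressions of \cref{def::us}. For (iv), it remains to show that the Bregman-type terms relating $Q_k^{(i)}$ to $V_k^{(i)}$ vanish in the limit. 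Since $I_k\to 0$ and, by the proof of \cref{lem::I_k}, $I_k$ is a sum of four nonnegative monotonicity pairings, each of them tends to $0$; in particular $\langle u_1^k-u^\star, x_1^k-x^\star\rangle\to 0$. By \cref{lem::Bregman_two_point},
\[
\langle u_1^k-u^\star, x_1^k-x^\star\rangle = D_f(x_1^k,x^\star,u^\star) + D_f(x^\star,x_1^k,u_1^k),
\]
and both summands are nonnegative by \cref{lem::Dh_nonnegative} (using $u^\star\in\partial f(x^\star)$ and $u_1^k\in\partial f(x_1^k)$ from \cref{def::us}), hence $D_f(x_1^k,x^\star,u^\star)\to 0$ and $D_f(x^\star,x_1^k,u_1^k)\to 0$. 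Since $Q_k^{(1)}=V_k^{(1)}-2\alpha(1-\tau)D_f(x_1^k,x^\star,u^\star)$ and $Q_k^{(2)}=V_k^{(2)}-2\alpha(\tau-1)D_f(x^\star,x_1^k,u_1^k)$, with the subtracted quantities tending to $0$ and $V_k^{(i)}$ convergent, $(Q_k^{(i)})$ converges.

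For (iii): having $(Q_k^{(i)})$ bounded and $x_1^k-x_2^k\to 0$ (hence bounded) from (ii), I would first extract boundedness of $(z^k)$ from the leading squared term of $Q_k^{(i)}$. For $i=2$ this term is $\|z^k-z^\star+\tfrac\theta2(x_2^k-x_1^k)\|^2$, so a triangle-inequality estimate gives $\|z^k-z^\star\|\le\sqrt{Q_k^{(2)}}+\tfrac\theta2\|x_2^k-x_1^k\|$. For $i=1$ it is $\|z^k-z^\star+\tfrac\theta2(x_2^k-x_1^k)+(\tau-1)(x_1^k-x^\star)\|^2$, and combining with $\|x_1^k-x^\star\|=\|\prox_{\alpha f}(z^k)-\prox_{\alpha f}(z^\star)\|\le\|z^k-z^\star\|$ (nonexpansiveness of the proximal operator together with $\prox_{\alpha f}(z^\star)=x^\star$, which holds since $(x^\star,x^\star,z^\star)\in\Fix(\partial f,B,\alpha,\beta)$) and $1-\tau\ge 0$ yields $\tau\|z^k-z^\star\|\le\sqrt{Q_k^{(1)}}+\tfrac\theta2\|x_1^k-x_2^k\|$, and $\tau>0$. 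Once $(z^k)$ is bounded, $(x_1^k)$ is bounded by nonexpansiveness of $\prox_{\alpha f}$, $(x_2^k)$ by $x_2^k=x_1^k+(x_2^k-x_1^k)$, and $(u_1^k),(u_2^k)$ by their affine expressions in \cref{def::us}.

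Given the Lyapunov equality \eqref{eq::lyap}, the whole argument is routine except for one point, which I expect to be the main obstacle to discover: in (iv), one must realize that the Bregman gap between $V_k^{(i)}$ and $Q_k^{(i)}$ is controlled by the single monotonicity pairing $\langle u_1^k-u^\star, x_1^k-x^\star\rangle$, which is one of the nonnegative summands of the summable quantity $I_k$ and therefore vanishes, splitting via \cref{lem::Bregman_two_point} into two vanishing Bregman-type distances. A smaller subtlety occurs in (iii) in the boundary case $\tau=1$, where $Q_k^{(i)}$ no longer controls $\|x_1^k-x^\star\|$ on its own and the nonexpansiveness of the single proximal step is genuinely needed.
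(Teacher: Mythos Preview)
Your proof is correct and follows the same Lyapunov-telescoping skeleton as the paper, but with one genuine simplification worth noting. For (iv), the paper first establishes boundedness (iii), then uses boundedness of $(x_1^k)$ and $(u_2^k)$ together with (ii) to rewrite $\langle u_1^k-u^\star,x_1^k-x^\star\rangle+\langle u_2^k+u^\star,x_2^k-x^\star\rangle$ as an expression that visibly tends to $0$, and only then squeezes the two Bregman terms. You instead observe directly that $I_k$ is a sum of four nonnegative monotonicity pairings, so $I_k\to 0$ forces each pairing---in particular $\langle u_1^k-u^\star,x_1^k-x^\star\rangle$---to vanish, and \cref{lem::Bregman_two_point} finishes. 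This decouples (iv) from (iii) and is shorter. For (iii) with $i=1$ there is a second minor divergence: the paper reads off boundedness of $(1-\tau)(x_1^k-x^\star)$ from the stand-alone term $\tau(1-\tau)\|x_1^k-x^\star\|^2$ in $V_k^{(1)}$, whereas you feed the nonexpansiveness estimate $\|x_1^k-x^\star\|\le\|z^k-z^\star\|$ back into the first squared term to get $\tau\|z^k-z^\star\|\le\sqrt{Q_k^{(1)}}+\tfrac\theta2\|x_1^k-x_2^k\|$. Both arguments are valid; yours uses one extra outside fact (nonexpansiveness) but handles the boundary case $\tau=1$ uniformly.
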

\begin{proof}
    From \cref{lemma::analysis,lem::I_k} we conclude after telescoping \eqref{eq::lemma_equality} that, for all $K\in \N$:
    $$0 \leq \sum_{k = 0}^K\left(R_k + \theta \alpha I_k\right) \leq V_0 - V_{K+1} \leq V_0,$$  where we also use that $\alpha\theta>0$ for all $(\alpha,\beta,\theta)\in S^{(1)}\cup S^{(2)}$. By letting $K \to \infty$, we conclude that \ref{prp::analysis_i} holds. 

    Let $i\in \{1,2\}$ such that $(\alpha, \beta, \theta)\in S^{(i)}$. By \cref{lemma::analysis} and \ref{prp::analysis_i}, we have that $R_k \geq 0$ and $R_k \to 0$ as $k \to \infty$, respectively.
    If $i = 1$, by using the expression for $R_k$ from \cref{def::V_R}, nonnegativity of $(1-\tau)$ and $2\alpha (1-\tau)$, nonnegativity of the Bregman-type term by \cref{lem::Dh_nonnegative}, and strict positivity of $\theta(2\tau-\theta)$, we get that $x^k_1 - x^k_2 \to 0$ as $k \to \infty$.
    
    If $i = 2$, an analogous argument holds. Indeed, by using the expression for $R_k$ from \cref{lem::Lyapunov_def_alternative_2}, by nonnegativity of $(\tau - 1)$ and $2\alpha(\tau - 1)$, nonnegativity of the Bregman-type term by \cref{lem::Dh_nonnegative}, and strict positivity of $\tau\theta(2 - \theta)$, we get that $x^k_1 - x^k_2 \to 0$ as $k \to \infty$.
    
    By \cref{def::algorithm_sequences} we have that
    $$u_1^k + u_2^k = \frac{1}{\beta}(x_1^k-x_2^k) \rightarrow 0$$
    as $k \rightarrow \infty$ and so \ref{prp::analysis_ii} holds.
    
    Since $$0 \leq V_{k+1} \leq V_k- R_k \leq V_k$$ for all $k \in \N$, we conclude that $\left(V_k\right)_{k \in \N}$ is a sequence of nonincreasing nonnegative numbers and so $\left(V_k\right)_{k \in \N}$ converges.

    If $(\alpha, \beta, \theta)\in S^{(1)}$, then since all terms of $V_k$, given by \cref{def::V_R}, are nonnegative, we have in particular that $\tau(1-\tau)\norm{x_1^k-x^\star}^2$ is bounded, implying that $\left((1-\tau)(x_1^k-x^\star)\right)_{k\in\N}$ is bounded since $\tau > 0$. Since the sequence 
    \begin{align*}
        \left(\norm{z^k - z^\star + \frac{\theta}{2}(x_2^k-x_1^k) + (\tau-1)(x_1^k-x^\star) }^2\right)_{k\in \N}
    \end{align*}
    is bounded, this fact, combined with \ref{prp::analysis_ii}, shows that $(z^k-z^\star)_{k\in \N}$ is bounded.

If instead $(\alpha, \beta, \theta)\in S^{(2)}$, by a similar argument and using \cref{lem::Lyapunov_def_alternative_2}, we have that
\begin{align*}
    \left(\norm{z^k-z^\star + \frac{\theta}{2}(x_2^k-x_1^k)}^2\right)_{k\in \N}
\end{align*} is bounded, this fact, combined with \ref{prp::analysis_ii}, shows that $(z^k-z^\star)_{k\in \N}$ is bounded.

    By nonexpansiveness of resolvents, see \cite[Proposition~2.38]{BauschkeCombettes2017}, the definitions of $(x_1^k)_{k\in \N}$ and of the fixed-point set in \cref{def::fix}, and boundedness of $(z^k-z^\star)_{k\in \N}$, we get that $(x_1^k-x^\star)_{k\in \N}$ is bounded. Boundedness of $(x_2^k-x^\star)_{k\in \N}$ then follows from \itemcref{prp::analysis}{prp::analysis_ii}. Therefore, $\bigl((x^k_1, x^k_2, z^k)\bigr)_{k \in \N}$ is bounded and by \cref{def::algorithm_sequences} we conclude that $(u_1^k)_{k\in \N}$ and $(u_2^k)_{k\in \N}$ are bounded, so \ref{prp::analysis_iii} holds.

    Using $u^\star \in \Hil$ as in \cref{def::algorithm_sequences}, we get that
    \begin{equation*}
        \begin{aligned}
            0 &\leq D_f(x^\star, x_1^k, u_1^k) + D_f(x_1^k, x^\star, u^\star) \\
            &= \langle u_1^k -u^\star, x_1^k-x^\star \rangle \\
            &\leq \langle u_1^k -u^\star, x_1^k-x^\star \rangle + \langle u_2^k+ u^\star, x_2^k-x^\star\rangle\\
            &= -\langle u_1^k + u_2^k, x^\star\rangle + \langle u^\star, x_2^k-x_1^k\rangle + \langle u_1^k + u_2^k, x_1^k\rangle + \langle u_2^k, x^k_2-x_1^k\rangle \to 0
        \end{aligned}
    \end{equation*}
    as $k \to \infty$, where the first equality is due to \cref{lem::Bregman_two_point}, the second inequality is due to the monotonicity of $B$ since $u_2^k\in B(x_2^k)$ and $-u^\star \in B(x^\star)$, and the limit is a consequence of boundedness of $(x_1^k)_{k\in \mathbb N}$ and $(u_2^k)_{k\in \mathbb N}$ combined with \ref{prp::analysis_ii}.
    We conclude that $D_f(x^\star, x_1^k, u_1^k)\to 0$ and $D_f(x_1^k, x^\star, u^\star)\to 0$ as $k \to \infty$. Combining this with the fact that $\left(V_k\right)_{k \in \N}$ converges, we get in light of the expressions for $V_k$, as given in \cref{def::V_R} and \cref{lem::Lyapunov_def_alternative_2}, that \ref{prp::analysis_iv} holds.
\end{proof}

\begin{proposition}\label{prp::weak_convergence}
    \cref{alg::main} converges unconditionally for $(\alpha, \beta, \theta) \in S^{(1)}\cup S^{(2)}$ over $\partial \mathcal{F} \times \mathcal{A}$.
\end{proposition}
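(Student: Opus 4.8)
The plan is to combine the consequences collected in \cref{prp::analysis} with a standard Opial-type argument to upgrade boundedness of the iterates into weak convergence to a point of $\Fix(\partial f, B, \alpha, \beta)$. Fix $(\alpha, \beta, \theta) \in S^{(1)} \cup S^{(2)}$ and pick $i \in \{1, 2\}$ with $(\alpha, \beta, \theta) \in S^{(i)}$; let $(x^\star, x^\star, z^\star) \in \Fix(\partial f, B, \alpha, \beta)$ be arbitrary (such a point exists by \cref{prp::fix_point_prp} whenever $\zer(\partial f + B) \neq \emptyset$). By \cref{prp::analysis}\ref{prp::analysis_iii} the sequence $\bigl((x_1^k, x_2^k, z^k)\bigr)_{k\in\N}$ is bounded, so it has at least one weak sequential cluster point. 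The first main step is to identify any such cluster point as an element of $\Fix(\partial f, B, \alpha, \beta)$.

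For the cluster-point identification, suppose $(x_1^{k_n}, x_2^{k_n}, z^{k_n}) \weakto (\bar x_1, \bar x_2, \bar z)$ along a subsequence. From \cref{prp::analysis}\ref{prp::analysis_ii} we have $x_1^k - x_2^k \to 0$ strongly, hence $\bar x_1 = \bar x_2 =: \bar x$. Writing the first two resolvent relations of \cref{alg::main} in graph form via \cref{def::us} gives $(x_1^k, u_1^k) \in \gra \partial f$ and $(x_2^k, u_2^k) \in \gra B$, with $u_1^k = \tfrac{1}{\alpha}(z^k - x_1^k)$ and $u_2^k = \tfrac{1}{\beta}\bigl((1 + \tfrac{\beta}{\alpha})x_1^k - \tfrac{\beta}{\alpha}z^k - x_2^k\bigr)$. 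Since $x_1^{k_n} \weakto \bar x$ and $z^{k_n} \weakto \bar z$, we get $u_1^{k_n} \weakto \tfrac{1}{\alpha}(\bar z - \bar x) =: \bar u_1$, and similarly $u_2^{k_n} \weakto \tfrac{1}{\beta}\bigl((1+\tfrac{\beta}{\alpha})\bar x - \tfrac{\beta}{\alpha}\bar z - \bar x\bigr) = -\bar u_1$. Now I invoke weak-strong sequential closedness of the graphs of the maximally monotone operators $\partial f$ and $B$ (as in \cite[Proposition~20.38]{BauschkeCombettes2017}); the needed strong convergence of the ``inner'' resolvent arguments comes from $x_1^k - x_2^k \to 0$ together with $u_1^k + u_2^k \to 0$ (equivalently, the right-hand side feeding $J_{\beta B}$ converges weakly while its image $x_2^k$ does too, and one uses the standard trick of pairing the graph inclusions with the fixed-point inclusions). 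This yields $\bar u_1 \in \partial f(\bar x)$ and $-\bar u_1 \in B(\bar x)$, i.e.\ $0 \in \partial f(\bar x) + B(\bar x)$, and unwinding the definitions gives $(\bar x, \bar x, \bar z) \in \Fix(\partial f, B, \alpha, \beta)$ by \cref{prp::fix_point_prp}. A cleaner route for the pairing is to use the summability of $I_k$ from \cref{prp::analysis}\ref{prp::analysis_i}: it forces $\langle u_1^k - u^\star, x_1^k - x^\star\rangle \to 0$ and $\langle u_2^k + u^\star, x_2^k - x^\star\rangle \to 0$ for \emph{every} fixed point, and combined with \cref{lem::Bregman_two_point} and \cref{lem::Dh_nonnegative} this pins down the limiting inclusions.

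The second main step is uniqueness of the weak cluster point, for which I use an Opial argument driven by the convergence of the quadratic-form quantity $Q_k^{(i)}$ from \cref{prp::analysis}\ref{prp::analysis_iv}. Fix two cluster points $(\bar x, \bar x, \bar z)$ and $(\hat x, \hat x, \hat z)$, both in $\Fix(\partial f, B, \alpha, \beta)$ by the previous step. Using \cref{rem::quadratic_form_remark}, write $Q_k^{(i)} = \mathcal{Q}\bigl(Q_V^{(i)}, (x_1^k - x^\star, x_2^k - x^\star, z^k - z^\star)\bigr)$; the ``cross'' (linear-in-iterate) part of $Q_k^{(i)}$ is a bounded linear functional of the iterates, so along any weakly convergent subsequence it converges, while the remaining purely quadratic part is handled by expanding $Q_k^{(i)}$ around a different reference fixed point and subtracting. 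Concretely, using $x_1^k - x_2^k \to 0$ one reduces $Q_k^{(i)}$ (up to $o(1)$) to a genuine squared norm of $(x_1^k - x^\star, z^k - z^\star)$ in a fixed positive-definite inner product (positive definiteness on the relevant subspace is what the coefficient conditions defining $S^{(i)}$ guarantee), and then the standard two-reference-point identity shows that $\|(x_1^k, z^k) - (\bar x, \bar z)\|_{Q}^2 - \|(x_1^k, z^k) - (\hat x, \hat z)\|_Q^2$ converges; evaluating the limit along the two subsequences converging to $(\bar x, \bar z)$ and to $(\hat x, \hat z)$ respectively forces $(\bar x, \bar z) = (\hat x, \hat z)$. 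Hence the whole sequence converges weakly to a single point $(x^\star, x^\star, z^\star) \in \Fix(\partial f, B, \alpha, \beta)$, which is exactly the assertion of \cref{def::unconditional}. Since $\partial\mathcal{F} \times \partial\mathcal{F} \subset \partial\mathcal{F} \times \mathcal{A}$, the same conclusion holds over $\partial\mathcal{F}\times\partial\mathcal{F}$.

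The step I expect to be the main obstacle is the cluster-point identification in infinite dimensions: transferring the weak limits through the (possibly discontinuous) maximally monotone operators $\partial f$ and $B$ requires the correct pairing of the graph inclusions so that a demiclosedness/graph-closedness principle applies. The quantities $R_k^{(i)} \to 0$ and $I_k \to 0$ from \cref{prp::analysis} are exactly engineered to supply the missing strong convergences and the vanishing monotonicity pairings, so the argument should go through, but care is needed because $D_f(x^\star, x_1^k, u_1^k) \to 0$ controls a Bregman-type quantity rather than a norm, and one must verify that this, together with $x_1^k - x_2^k \to 0$ and $u_1^k + u_2^k \to 0$, genuinely implies the limiting inclusions — I would lean on \cref{lem::Bregman_two_point} to rewrite the Bregman terms as the monotonicity inner products $\langle u_1^k - u^\star, x_1^k - x^\star\rangle$ and then pass to the limit along the subsequence.
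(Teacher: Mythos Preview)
Your overall two-step strategy (cluster-point identification followed by an Opial-type uniqueness argument via the quadratic form $Q_V^{(i)}$) matches the paper's, and your Part~2 is essentially right. However, your Part~1 contains a genuine gap.

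\textbf{The gap in the cluster-point identification.} You invoke weak-strong sequential closedness of $\gra\partial f$ and $\gra B$ \emph{separately}, but for that you would need one of the two components in each graph pair to converge \emph{strongly}. You only have $x_1^{k_n}\weakto\bar x$ and $u_1^{k_n}\weakto\bar u_1$ (both weak), and likewise for $(x_2^{k_n},u_2^{k_n})$. The alternative you sketch via $I_k\to 0$ gives $\langle u_1^k-u^\star,x_1^k-x^\star\rangle\to 0$, but this does not by itself force $(\bar x,\bar u_1)\in\gra\partial f$ either: expanding the inner product and passing to weak limits only yields $\langle u_1^{k_n},x_1^{k_n}\rangle\to\langle\bar u_1,x^\star\rangle+\langle u^\star,\bar x\rangle-\langle u^\star,x^\star\rangle$, and concluding that this equals $\langle\bar u_1,\bar x\rangle$ presupposes the very monotonicity relation you are trying to establish. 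The paper resolves this by bundling the two graph conditions into a single maximally monotone operator
\[
C=\begin{bmatrix}\partial f&\Id\\-\Id&B^{-1}\end{bmatrix}
\]
acting on the pair $(x_1^k,u_2^k)$, for which the output $(u_1^k+u_2^k,\,x_2^k-x_1^k)$ converges \emph{strongly} to $0$ by \cref{prp::analysis}\ref{prp::analysis_ii}. Weak-strong closedness of $\gra C$ then applies cleanly. This product-space repackaging is precisely the missing idea in your write-up.

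\textbf{A minor point in the uniqueness step.} Your claim that the induced quadratic form is positive definite on the $(x_1,z)$-subspace is not quite correct at $\tau=\alpha/\beta=1$: there \eqref{eq::fundamental_equation} only gives $\bar z=\hat z$. The paper closes this by observing that $\bar x=J_{\alpha\partial f}(\bar z)=J_{\alpha\partial f}(\hat z)=\hat x$, which you should include to cover the boundary case $S^{(1)}\cap S^{(2)}$.
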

\begin{proof}
    In this proof, we use the notation introduced in \cref{def::algorithm_sequences,def::V_R}.

    By \itemcref{prp::analysis}{prp::analysis_iii}, the sequence $\bigl((x^k_1, x^k_2, z^k)\bigr)_{k \in \N}$ is bounded, therefore the sequence $\bigl((x^k_1, x^k_2, z^k)\bigr)_{k \in \N}$ has a nonempty set of weak sequential cluster points, see \cite[Lemma~2.45]{BauschkeCombettes2017}. 
    
    We begin by showing that all weak sequential cluster points of $\bigl((x^k_1, x^k_2, z^k)\bigr)_{k \in \N}$ are elements of $\Fix(\partial f, B, \alpha, \beta)$. 
    To that end, suppose that $(\bar{x}_1, \bar{x}_2, \bar{z}) \in \Hil^3$ is a weak sequential cluster point of $\bigl((x^k_1, x^k_2, z^k)\bigr)_{k \in \N}$, say $(x^{k_n}_1, x^{k_n}_2, z^{k_n}) \weak (\bar{x}_1, \bar{x}_2, \bar{z})$ as $n \to \infty$. By \itemcref{prp::analysis}{prp::analysis_ii}, we get that $\bar{x}_1 = \bar{x}_2$. Let $\bar{x} \coloneqq \bar{x}_1$. With the definition of $u_2^k$ in \cref{def::algorithm_sequences}, we get that the subsequence $(x^{k_n}_1, u^{k_n}_2) \weak \left(\bar{x}, \frac{1}{\alpha}\left(\bar{x} - \bar{z}\right) \right)$ as $n \to \infty$.
    Moreover \begin{align*}
        \begin{cases}
        x^k_1 &= J_{\alpha \partial f}(z^k) \\
        x^k_2 &= J_{\beta B}\left(\left(1+\frac{\beta}{\alpha}\right)x^k_1 - \frac{\beta}{\alpha}z^k\right)
    \end{cases} &\iff \begin{cases}
        u^k_1 &\in \partial f(x^k_1) \\
        u^k_2 &\in B(x^k_2)
    \end{cases} \\
    &\iff \begin{cases}
        u^k_1 + u^k_2 &\in \partial f(x^k_1) + u^k_2 \\
        x^k_2 - x^k_1  &\in - x^k_1 + B^{-1}(u^k_2)
    \end{cases} \\
    &\iff \begin{bmatrix}
        u^k_1 + u^k_2 \\
        x^k_2 - x^k_1
    \end{bmatrix} \in \underbrace{\begin{bmatrix}
        \partial f & \Id \\
        -\Id & B^{-1}
    \end{bmatrix}}_{=: C} \begin{bmatrix}
        x^k_1 \\
        u^k_2
    \end{bmatrix},
    \end{align*}  
    where from \itemcref{prp::analysis}{prp::analysis_ii} we conclude that $(u^k_1 + u^k_2, x^k_2 - x^k_1) \to 0$ as $k \to \infty$. Since $C$ equals the sum of two maximally monotone operators $$C = \begin{bmatrix}
        \partial f & 0 \\
        0 & B^{-1}
    \end{bmatrix} + \begin{bmatrix}
        0 & \Id \\
        -\Id & 0
    \end{bmatrix},$$ the latter of which has full domain, we conclude that $C$ is maximally monotone \cite[Corollary~25.5(i)]{BauschkeCombettes2017} and that $\gra(C)$ is sequentially closed in the weak-strong topology of $\Hil \times \Hil$ \cite[Proposition~20.38]{BauschkeCombettes2017}, implying, since $(u^{k_n}_1 + u^{k_n}_2, x^{k_n}_2 - x^{k_n}_1)\in C(x_1^{k_n},u_2^{k_n})$, that $\left(\bar{x}, \frac{1}{\alpha}\left(\bar{x} - \bar{z}\right) \right) \in \zer C$. This implies that
    \begin{align*}
            \begin{bmatrix}
                0 \\
                0 
            \end{bmatrix} \in \begin{bmatrix}
                \partial f & \Id \\
                -\Id & B^{-1} 
            \end{bmatrix} \begin{bmatrix}
                \bar{x} \\
                \frac{1}{\alpha}(\bar{x} - \bar{z})
            \end{bmatrix} &\iff
            \begin{cases}
                0 &\in \partial f(\bar{x}) + \frac{1}{\alpha}(\bar{x} - \bar{z}) \\
                0 &\in - \bar{x} + B^{-1}\left(\frac{1}{\alpha}(\bar{x} - \bar{z})\right)
            \end{cases} \\
            &\iff 
            \begin{cases}
            \frac{1}{\alpha}(\bar{z} - \bar{x}) &\in \partial f(\bar{x}) \\
            \frac{1}{\alpha}(\bar{x} - \bar{z}) &\in B(\bar{x})
            \end{cases} \\
            &\iff 
            \begin{cases}
            \frac{1}{\alpha}(\bar{z} - \bar{x}) &\in \partial f(\bar{x}) \\
            \frac{1}{\beta}\left(\left(1 + \frac{\beta}{\alpha} \right)\bar{x} - \frac{\beta}{\alpha}\bar{z} - \bar{x}\right)  &\in B(\bar{x})
            \end{cases} \\
             &\iff 
            \begin{cases}
            \bar{x} &= J_{\alpha \partial f}(\bar{z}) \\
            \bar{x} &= J_{\beta B}\left(\left(1 + \frac{\beta}{\alpha} \right)\bar{x} - \frac{\beta}{\alpha}\bar{z} \right) 
            \end{cases} \\
            &\iff (\bar{x}, \bar{x}, \bar{z}) \in \Fix(\partial f, B, \alpha, \beta).
    \end{align*} 
    We have shown that all weak sequential cluster points of $\bigl((x^k_1, x^k_2, z^k)\bigr)_{k \in \N}$ are elements of $\Fix(\partial f, B, \alpha, \beta)$. 

    Next, we show that there exists only one weak sequential cluster point of the sequence $\bigl((x^k_1, x^k_2, z^k)\bigr)_{k \in \N}$. Let $v^k \coloneqq (x^k_1, x^k_2, z^k)$ for all $k \in \N$. Suppose that $\bar{v} \coloneqq (\bar{x}_1, \bar{x}_2, \bar{z}) \in \Hil^3$ and $\hat{v} \coloneqq (\hat{x}_1, \hat{x}_2, \hat{z}) \in \Hil^3$ are two weak sequential cluster points of $(v^k)_{k \in \N}$, say $v^{k_n} \weak \bar{v}$ and $v^{\ell_n} \weak \hat{v}$ as $n \to \infty$. We get from before that $\bar{x}_1 = \bar{x}_2$ and $\hat{x}_1 = \hat{x}_2$. Define $\bar{x} \coloneqq \bar{x}_1$ and $\hat{x} \coloneqq \hat{x}_1$. Moreover, both $\bar{v}$ and $\hat{v}$ lie in $\Fix(\partial f, B, \alpha, \beta)$. 

    Let $i \in \{1, 2\}$ be such that $(\alpha, \beta, \theta) \in S^{(i)}$ and let $\bar{Q}_V^{(i)} \in \Sym^{3}$ be the matrix from \cref{rem::quadratic_form_remark}. By \itemcref{prp::analysis}{prp::analysis_iv}, both $\left( \mathcal{Q}\left(\bar{Q}_V^{(i)}, v^k - \bar{v}\right)\right)_{k \in \N}$ and $\left( \mathcal{Q}\left(\bar{Q}_V^{(i)}, v^k - \hat{v}\right)\right)_{k \in \N}$ converge. Since 
    $$2 \left\langle v^k, \bar{Q}_V^{(i)}(\bar{v} - \hat{v}) \right\rangle =  \mathcal{Q}\left(\bar{Q}_V^{(i)}, v^k - \bar{v}\right) -  \mathcal{Q}\left(\bar{Q}_V^{(i)}, v^k - \hat{v}\right) +  \mathcal{Q}\left(\bar{Q}_V^{(i)}, \bar{v}\right) + \mathcal{Q}\left(\bar{Q}_V^{(i)}, \hat{v}\right)$$ we get that $\left(\left\langle v^k, \bar{Q}_V^{(i)}\left(\bar{v} - \hat{v}\right) \right\rangle\right)_{k \in \N}$ converges as well. Therefore, passing along the subsequences $(v^{k_n})_{n \in \N}$ and $(v^{\ell_n})_{n \in \N}$ gives that $$\left\langle \bar{v}, \bar{Q}_V^{(i)}(\bar{v} - \hat{v}) \right\rangle = \left\langle \hat{v}, \bar{Q}_V^{(i)}(\bar{v} - \hat{v}) \right\rangle,$$ which implies that \begin{equation} \label{eq::fundamental_equation}
        \mathcal{Q}\left(\bar{Q}_V^{(i)}, \bar{v} - \hat{v}\right) = 0.
    \end{equation}
    If $i = 1$, then \eqref{eq::fundamental_equation} equals $$\norm{\bar{z} - \hat{z} + (\tau - 1)(\bar{x} - \hat{x})}^2 + \tau(1 - \tau)\norm{\bar{x} - \hat{x}}^2 = 0,$$
    where $(1-\tau)$ is nonnegative for $(\alpha, \beta, \theta)\in S^{(1)}$.
    If $i = 2$, then \eqref{eq::fundamental_equation} equals \begin{equation*}
        \norm{\bar{z} - \hat{z}}^2 + \frac{\theta(4\tau - \theta)}{4}\norm{\frac{2(\tau - 1)}{4\tau - \theta}(\bar{x} - \hat{x})}^2 + \frac{\tau(\tau - 1)(4 - \theta)}{4\tau - \theta}\norm{\bar{x} - \hat{x}}^2 = 0,
    \end{equation*}
    where $(4\tau-\theta)$, $(4-\theta)$ and $(\tau-1)$ are nonnegative for $(\alpha, \beta, \theta)\in S^{(2)}$.
    
    This implies that $\bar{z} = \hat{z}$ in both cases. Since $$\bar{x} = J_{\alpha \partial f}(\bar{z}) = J_{\alpha \partial f}(\hat{z}) =\hat{x}$$ we find that $\bar{v} = \hat{v}$, proving uniqueness of weak sequential cluster points.
    
    Therefore, $\bigl((x^k_1, x^k_2, z^k)\bigr)_{k \in \N}$ is bounded and every weak sequential cluster point of $\bigl((x^k_1, x^k_2, z^k)\bigr)_{k \in \N}$ belongs to $\Fix(\partial f, B, \alpha, \beta)$, and so by \cite[Lemma~2.46]{BauschkeCombettes2017}, the sequence $\bigl((x^k_1, x^k_2, z^k)\bigr)_{k \in \N}$ converges weakly, and since all weak sequential cluster points lie in $\Fix(\partial f, B, \alpha, \beta)$, we have that $\bigl((x^k_1, x^k_2, z^k)\bigr)_{k \in \N}$ converges weakly to a point in $\Fix(\partial f, B, \alpha, \beta)$.
    \end{proof}

\subsection{Counterexamples}\label{sec::Counter_Examples}
In the previous section, we have shown weak convergence of \cref{alg::main} over the parameter region $S^{(1)}\cup S^{(2)}$. With the following counterexamples we show that this region is tight, in the sense that \cref{alg::main} fails to be unconditionally convergent over $\partial \mathcal{F}\times \partial \mathcal{F}$ for parameters $(\alpha, \beta, \theta)$ outside of $S^{(1)}\cup S^{(2)}$.
\begin{proposition}\label{prp::counters}
    If \cref{alg::main} is unconditionally convergent for $(\alpha, \beta, \theta) \in \R_{++}^2 \times (\R \setminus \{0\})$ over $\partial\mathcal{F}\times \partial\mathcal{F}$ then $(\alpha, \beta, \theta)\in S^{(1)}\cup S^{(2)}$, i.e., $S(\partial\mathcal{F}\times\partial\mathcal{F})\subset S^{(1)}\cup S^{(2)}$.
\end{proposition}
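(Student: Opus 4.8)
The plan is to prove the stated inclusion $S(\partial\mathcal F\times\partial\mathcal F)\subset S^{(1)}\cup S^{(2)}$ by exhibiting, for every parameter triple outside $S^{(1)}\cup S^{(2)}$, a one-dimensional instance of \eqref{eq::main_optimization_problem} on which \cref{alg::main} fails to produce a convergent sequence. Recalling that $S^{(1)}\cup S^{(2)}=\{(\alpha,\beta,\theta)\in\R_{++}^3\mid \theta\in(0,\min\{2,2\alpha/\beta\})\}$, it suffices to construct two instances: one that forces $\theta\in(0,2)$ and one that forces $\theta\in(0,2\alpha/\beta)$. Both will take $\Hil=\R$, so that weak and strong convergence coincide, and both will use only $f\equiv 0$ and $g=\iota_{\{0\}}$ (in the two possible orders), whose proximal operators are $\prox_{\gamma\cdot 0}=\Id$ and $\prox_{\gamma\iota_{\{0\}}}\equiv 0$ for every $\gamma>0$. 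In each case $\zer(\partial f+\partial g)=\{0\}\neq\emptyset$, so the instance is admissible in \cref{def::unconditional}, and a direct computation via \cref{def::fix} (or \cref{prp::fix_point_prp}) gives $\Fix(\partial f,\partial g,\alpha,\beta)=\{(0,0,0)\}$.

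For the first instance take $f\equiv 0$ and $g=\iota_{\{0\}}$. Substituting the proximal operators into \cref{alg::main} yields $x_1^k=z^k$ and $x_2^k=\prox_{\beta g}(z^k)=0$, hence $z^{k+1}=(1-\theta)z^k$, i.e. $z^k=(1-\theta)^k z^0$. Taking $z^0\neq 0$, the sequence $(x_1^k,x_2^k,z^k)$ converges (necessarily to $(0,0,0)$) if and only if $|1-\theta|<1$, that is $\theta\in(0,2)$; for $\theta\ge 2$ or $\theta<0$ (recall $\theta\neq 0$ throughout) the factor $1-\theta$ has modulus $\ge 1$ and is not equal to $1$, so $(z^k)$ either diverges or oscillates between $z^0$ and $-z^0$, and convergence fails. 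Thus unconditional convergence over $\partial\mathcal F\times\partial\mathcal F$ forces $\theta\in(0,2)$.

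For the second instance swap the roles, $f=\iota_{\{0\}}$ and $g\equiv 0$. Now $x_1^k=\prox_{\alpha f}(z^k)=0$ and $x_2^k=\prox_{\beta g}\bigl((1+\beta/\alpha)\cdot 0-(\beta/\alpha)z^k\bigr)=-(\beta/\alpha)z^k$, so $z^{k+1}=(1-\theta\beta/\alpha)z^k$, i.e. $z^k=(1-\theta\beta/\alpha)^k z^0$; for $z^0\neq 0$ this converges if and only if $|1-\theta\beta/\alpha|<1$, that is $\theta\in(0,2\alpha/\beta)$, with the boundary value $\theta=2\alpha/\beta$ again producing a non-convergent alternating sequence. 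Combining the two instances, any $(\alpha,\beta,\theta)\in\R_{++}^2\times(\R\setminus\{0\})$ for which \cref{alg::main} is unconditionally convergent over $\partial\mathcal F\times\partial\mathcal F$ must satisfy $\theta\in(0,2)\cap(0,2\alpha/\beta)=(0,\min\{2,2\alpha/\beta\})$, which is precisely the defining condition of $S^{(1)}\cup S^{(2)}$, giving $S(\partial\mathcal F\times\partial\mathcal F)\subset S^{(1)}\cup S^{(2)}$. There is no genuine conceptual obstacle here: the computations are elementary scalar recursions, and the only points requiring attention are the boundary cases $\theta=2$ and $\theta=2\alpha/\beta$, where the recursion multiplier equals $-1$ and one must explicitly note the failure of convergence for $z^0\neq 0$, together with the routine check that these two recursions (plus the excluded value $\theta=0$) indeed exhaust the complement of $S^{(1)}\cup S^{(2)}$.
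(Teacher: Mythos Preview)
Your proof is correct and follows essentially the same approach as the paper: the two counterexamples $(f,g)=(0,\iota_{\{0\}})$ and $(f,g)=(\iota_{\{0\}},0)$, the resulting scalar recursions $z^{k+1}=(1-\theta)z^k$ and $z^{k+1}=(1-\theta\beta/\alpha)z^k$, and the conclusion are identical. One small framing point: you write ``take $\Hil=\R$'', but in the paper $\Hil$ is a fixed Hilbert space with $\dim\Hil\ge 1$; the same functions and recursions work verbatim in any such $\Hil$ (and non-convergence of $z^k=c^kz^0$ for $|c|\ge 1$, $c\neq 1$, $z^0\neq 0$ holds weakly as well), so this is cosmetic rather than a gap.
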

\begin{proof}    
    Let $(\alpha, \beta, \theta) \in \R_{++}^2 \times (\R \setminus \{0\})$ be such that \cref{alg::main} is unconditionally convergent for $(\alpha, \beta, \theta)$ over $\partial\mathcal{F}\times \partial\mathcal{F}$.
    
    First, let $f = 0 \in \mathcal{F}$ and $g = \iota_{\{0\}} \in \mathcal{F}$. Note that $\zer(\partial f + \partial g) = \{0\}$. Let $\bigl((x^k_1, x^k_2, z^k)\bigr)_{k \in \N}$ be generated by \cref{alg::main} applied to $(\partial f, \partial g, z^0, \alpha, \beta, \theta)$ given some initial point $z^0 \in \Hil \setminus \{0\}$. Note that such an element $z^0$ exists since $\dim \Hil \geq 1$. Then
    \begin{align*}
        x_1^k &= \prox_{\alpha f}(z^k) = z^k\\
        x_2^k &= \prox_{\beta g}\left(\left(1+\frac{\beta}{\alpha}\right)x_1^k-\frac{\beta}{\alpha}z^k\right) = 0\\
        z^{k+1} &= z^k+ \theta(x_2^k-x_1^k) = (1-\theta)z^k
    \end{align*}
    for every $k\in \N$. We conclude in particular that $\theta \in (0,2)$.

    Likewise, let us repeat the above argument for $f = \iota_{\{0\}} \in \mathcal{F}$ and $g = 0 \in \mathcal{F}$ where again $\zer(\partial f + \partial g) = \{0\}$. Then
        \begin{align*}
        x_1^k &= \prox_{\alpha f}(z^k) = 0\\
        x_2^k &= \prox_{\beta g}\left(\left(1+\frac{\beta}{\alpha}\right)x_1^k-\frac{\beta}{\alpha}z^k\right) = -\frac{\beta}{\alpha}z^k \\
        z^{k+1} &= z^k+ \theta(x_2^k-x_1^k) = \left(1-\theta\frac{\beta}{\alpha}\right)z^k
    \end{align*} for every $k\in \N$. We conclude in particular that $\theta \in (0, 2\alpha/\beta)$.
    
    Put together, if \cref{alg::main} is unconditionally convergent for $(\alpha, \beta, \theta) \in \R_{++}^2 \times (\R \setminus \{0\})$ over $\partial\mathcal{F}\times \partial\mathcal{F}$ then $\theta\in (0,2)\cap (0, 2\alpha/\beta)$, i.e., $(\alpha, \beta, \theta)\in S^{(1)}\cup S^{(2)}.$
\end{proof}

\subsection[]{Proof of \cref{thm::main_theorem}}\label{sec::thm_proof}
Combining the results in \cref{sec::Lyapunov_analysis,sec::Counter_Examples} we can now prove our main result.
\begin{proof}[Proof of \cref{thm::main_theorem}]
   \cref{prp::weak_convergence} gives that $$S^{(1)} \cup S^{(2)} \subset S(\partial\mathcal{F}\times \mathcal{A}) \subset S(\partial\mathcal{F}\times \partial\mathcal{F}).$$ \cref{prp::counters} gives that $S(\partial\mathcal{F}\times\partial\mathcal{F})\subset S^{(1)}\cup S^{(2)}$. Therefore,
   $$S^{(1)} \cup S^{(2)} = S(\partial\mathcal{F}\times \mathcal{A}) = S(\partial\mathcal{F}\times \partial\mathcal{F}).$$
\end{proof}
\section{Convergence Rates}
\label{sec::Additional_convergence_results}
In this section, we show ergodic convergence of the primal-dual gap for the extended Douglas--Rachford method in \cref{alg::main}, with convergence rate $\mathcal{O}(1/K)$.
We consider inclusion problems of the form
$$
\text{find } x\in \Hil \text{ such that } 0\in \partial f(x) + \partial g(x),
$$ where $f, g : \Hil\rightarrow \R\cup\{+\infty\} $ are proper, closed, and convex, such that $\zer(\partial f + \partial g) \neq \emptyset$. Let $(x^\star, u^\star)\in \Hil^2$ be such that
\begin{align}\label{eq::ustar}
    \begin{cases}
        u^\star&\in \partial f(x^\star)\\  
        x^\star &\in\partial g^*(-u^\star) 
    \end{cases}.
\end{align} 
We define the Lagrangian functions $\mathcal{L}\colon \mathcal{H}^2\rightarrow \R\cup\{-\infty,+\infty\}$ and $\bar{\mathcal{L}}\colon \mathcal{H}^2\rightarrow \R\cup\{-\infty,+\infty\}$ as
\begin{align*}
    \mathcal{L}(x,u) &\coloneqq f(x) + \langle x,u\rangle-g^*(u),\\
    \bar{\mathcal{L}}(x,u) &\coloneqq g(x) + \langle x,u\rangle-f^*(u).
\end{align*} Whenever the expression $+\infty - (+\infty)$ arises, it is understood to take the value $-\infty$. This convention is adopted for definiteness, although this indeterminate case will not occur in what follows. We define the primal-dual gap functions 
$\mathcal{D}_{x^\star, -u^\star}: \mathcal{H}^2\rightarrow \R\cup \{+\infty\}$ and $\bar{\mathcal{D}}_{x^\star, u^\star}: \mathcal{H}^2\rightarrow \R\cup \{+\infty\}$ as
\begin{align*}
    \mathcal{D}_{x^\star, -u^\star}(x, u) &\coloneqq \mathcal{L}(x, -u^\star)- \mathcal{L}(x^\star, u)\\
    \bar{\mathcal{D}}_{x^\star, u^\star}(x,u) &\coloneqq \bar{\mathcal{L}}(x,u^\star)-\bar{\mathcal{L}}(x^\star,u),
\end{align*}
for all $(x,u)\in \Hil^2$. In particular
\begin{equation}
    \label{eq::PD-gap}
    \begin{aligned}
    \mathcal{D}_{x^\star, -u^\star}(x,u) &= f(x) - \langle x, u^\star \rangle - f(x^\star) + g^*(u) - \langle x^\star, u\rangle -g^*(-u^\star)\\
    &= f(x) - \langle x-x^\star, u^\star \rangle - f(x^\star) + g^*(u) - \langle x^\star, u+u^\star\rangle -g^*(-u^\star)\\
    &= D_f(x,x^\star, u^\star) + D_{g^*}(u, -u^\star, x^\star),
    \end{aligned}
\end{equation}
and similarly
\begin{equation} 
    \label{eq::PD-gap_bar}
    \bar{\mathcal{D}}_{x^\star, u^\star}(x,u) = D_{f^*}(u, u^\star, x^\star) + D_g(x,x^\star, -u^\star),
\end{equation}
for all $(x,u)\in \mathcal{H}^2$. From the convexity of $f$ and $g$, \cref{eq::ustar}, and \cref{lem::Dh_nonnegative}, it follows that
$\mathcal{D}_{x^\star, -u^\star}$ and $\bar{\mathcal{D}}_{x^\star, u^\star}$ are convex and nonnegative.

Before presenting the ergodic convergence rate result, let us define the ergodic sequences of \cref{alg::main}.

\begin{definition}[Ergodic iterates of \cref{alg::main}]
    \label{def::Ergodic_iterates}
    Let $f,g$ be proper, closed, and convex functions, let $z^0\in \Hil$, and let $(\alpha, \beta, \theta) \in S^{(1)}\cup S^{(2)}$. For $\bigl((x^k_1, x^k_2, z^k)\bigr)_{k \in \N}$ generated by \cref{alg::main} applied to $(\partial f, \partial g, z^0, \alpha, \beta, \theta)$ with $(u_1^k)_{k\in \N}$ and $(u_2^k)_{k\in \N}$ as in \cref{def::algorithm_sequences}, we define the ergodic iterates
\begin{align*} 
        \bar{x}_j^K &\coloneqq \frac{1}{K+1}\sum_{k=0}^{K} x_j^k
\qquad\quad\text{and}\qquad\quad        \bar{u}_j^K \coloneqq \frac{1}{K+1}\sum_{k=0}^{K} u_j^k
\end{align*}
for each $j\in \{1,2\}$ and $K\in \N$. 
\end{definition}
\begin{proposition}
Following \cref{def::Ergodic_iterates}, let $(x^\star, u^\star)\in \Hil^2$ satisfy \cref{eq::ustar}. Then the primal-dual gap sequences 
$$\bigl(\mathcal{D}_{x^\star, -u^\star}(\bar x_1^K, \bar u_2^K)\bigr)_{K\in \N}\quad\text{and}\quad\bigl(\bar{\mathcal{D}}_{x^\star, u^\star}(\bar x_2^K, \bar u_1^K)\bigr)_{K\in \N}$$ converge to zero with rate $\mathcal{O}(1/K)$.
\end{proposition}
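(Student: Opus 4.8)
The plan is to read the $\mathcal{O}(1/K)$ rate directly off the Lyapunov equality of \cref{lemma::analysis} after telescoping, and then pass to the ergodic iterates using convexity of the two gap functions via Jensen's inequality. First I would fix $(x^\star,u^\star)$ satisfying \eqref{eq::ustar} and set $z^\star \coloneqq x^\star + \alpha u^\star$; a short computation of the same kind as in the proof of \cref{prp::fix_point_prp} shows that $(x^\star,x^\star,z^\star)\in\Fix(\partial f,\partial g,\alpha,\beta)$ and that, with this choice of $z^\star$, the auxiliary vector $\frac{1}{\alpha}(z^\star-x^\star)$ appearing in \cref{def::us} equals precisely this $u^\star$. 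This alignment lets me apply \cref{lemma::analysis} with the very $u^\star$ that defines the gap functions. Picking $i\in\{1,2\}$ with $(\alpha,\beta,\theta)\in S^{(i)}$, telescoping \eqref{eq::lyap} over $k=0,\dots,K$ and discarding the nonnegative quantities $R_k^{(i)}$ and $V_{K+1}^{(i)}$ yields $\theta\alpha\sum_{k=0}^{K} I_k \le V_0^{(i)}$, where $V_0^{(i)}$ is a finite constant (all norms are of finite vectors, and the Bregman-type summand evaluates proper, closed, convex functions at points of their domains).

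The key step is to identify each summand $I_k$ with a sum of the two gap functions. Writing $a_k \coloneqq \langle u_1^k-u^\star,x_1^k-x^\star\rangle+\langle u_2^k+u^\star,x_2^k-x^\star\rangle$, we have $I_k=a_k+a_{k+1}$ with $a_j\ge0$ for all $j$ (this is the monotonicity nonnegativity behind \cref{lem::I_k}, here with $B=\partial g$ and $-u^\star\in\partial g(x^\star)$ coming from \eqref{eq::ustar}). Applying \cref{lem::Bregman_two_point} to $f$ and to $g$ splits
\begin{align*}
a_k &= D_f(x_1^k,x^\star,u^\star)+D_f(x^\star,x_1^k,u_1^k) \\
&\quad +D_g(x_2^k,x^\star,-u^\star)+D_g(x^\star,x_2^k,u_2^k),
\end{align*}
and the conjugate-subgradient identities $D_{g^*}(u_2^k,-u^\star,x^\star)=D_g(x^\star,x_2^k,u_2^k)$ and $D_{f^*}(u_1^k,u^\star,x^\star)=D_f(x^\star,x_1^k,u_1^k)$, combined with the decompositions \eqref{eq::PD-gap} and \eqref{eq::PD-gap_bar}, rearrange this into
$$a_k = \mathcal{D}_{x^\star,-u^\star}(x_1^k,u_2^k)+\bar{\mathcal{D}}_{x^\star,u^\star}(x_2^k,u_1^k).$$
Since $\sum_{k=0}^{K} I_k = \sum_{k=0}^{K} a_k + \sum_{k=1}^{K+1} a_k \ge \sum_{k=0}^{K} a_k$ and both gap functions are nonnegative, I would conclude
$$\sum_{k=0}^{K}\mathcal{D}_{x^\star,-u^\star}(x_1^k,u_2^k)\le\frac{V_0^{(i)}}{\theta\alpha}\qquad\text{and}\qquad\sum_{k=0}^{K}\bar{\mathcal{D}}_{x^\star,u^\star}(x_2^k,u_1^k)\le\frac{V_0^{(i)}}{\theta\alpha}.$$

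To finish, I would invoke convexity: by \eqref{eq::PD-gap}--\eqref{eq::PD-gap_bar} each gap function is a sum of a Bregman-type distance convex in its primal argument and one convex in its dual argument, hence jointly convex, and it is nonnegative; moreover the ergodic iterates of \cref{def::Ergodic_iterates} stay in the relevant effective domains, since each $x_j^k$ lies in the range of a resolvent (so in $\dom f$ resp. $\dom g$) and each $u_j^k$ in $\dom f^*$ resp. $\dom g^*$, all of which are convex. Jensen's inequality then gives
$$\mathcal{D}_{x^\star,-u^\star}(\bar x_1^{K},\bar u_2^{K})\le\frac{1}{K+1}\sum_{k=0}^{K}\mathcal{D}_{x^\star,-u^\star}(x_1^k,u_2^k)\le\frac{V_0^{(i)}}{\theta\alpha(K+1)},$$
and the analogous bound for $\bar{\mathcal{D}}_{x^\star,u^\star}(\bar x_2^{K},\bar u_1^{K})$, which is the claimed $\mathcal{O}(1/K)$ rate. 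I expect the main obstacle to be the bookkeeping in the middle step: matching the four Bregman terms in $a_k$ with the primal and dual components of the two gap functions through the conjugate-subgradient identities, and verifying that the $u^\star$ from \eqref{eq::ustar} is consistent with the one implicit in \cref{def::us}. Everything else is telescoping and Jensen.
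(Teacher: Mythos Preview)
Your argument is correct and follows essentially the same route as the paper's proof: you telescope the Lyapunov equality of \cref{lemma::analysis}, drop $a_{k+1}\ge 0$ to pass from $I_k$ to $a_k$, identify $a_k$ with $\mathcal{D}_{x^\star,-u^\star}(x_1^k,u_2^k)+\bar{\mathcal{D}}_{x^\star,u^\star}(x_2^k,u_1^k)$, and conclude via Jensen. The only cosmetic difference is that the paper packages your two ``conjugate--subgradient identities'' into the single Fenchel--Young computation $D_h(y,x,u)+D_{h^*}(v,u,x)=\langle v-u,y-x\rangle$ (their \eqref{eq::Bregman_conjugate}) and applies Jensen to the \emph{sum} of the two gaps before separating by nonnegativity, whereas you split via \cref{lem::Bregman_two_point} first and apply Jensen to each gap; both orderings are equivalent.
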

\begin{proof}
We will use the following identity. If $h:\Hil\to\R\cup\{+\infty\}$ is proper, closed, and convex, $(x,y)\in \Hil^2$, $u\in \partial h(x)$, and $v\in \partial h(y)$, then 
\begin{equation}
\begin{aligned} 
    \label{eq::Bregman_conjugate}
    D_h(y, x, u) + D_{h^*}(v, u,x) &= h(y)-h(x)-\langle u, y-x\rangle + h^*(v) - h^*(u) - \langle x, v-u\rangle\\
    &= \left(h(y) + h^*(v) - \langle v, y\rangle \right) + \langle v, y\rangle \\
    &\quad-\left(h(x) + h^*(u) - \langle x,u\rangle\right) -\langle u,y\rangle - \langle x, v-u\rangle\\
    &= \langle v-u, y-x\rangle,
\end{aligned}
\end{equation}
since the expressions in the parentheses are zero by the Fenchel--Young equality.

By \cref{prp::fix_point_prp}, there exists some $z^\star \in \Hil$ such that $(x^\star, x^\star, z^\star)\in \Fix(\partial f, \partial g,\alpha, \beta)$. 
Combining \cref{eq::PD-gap} with \cref{eq::PD-gap_bar}, and then using \cref{eq::Bregman_conjugate} for $f$ and $g$, we get that
\begin{equation}
\label{eq::Bregman_equality_Duality_gap}
\begin{aligned}
     \mathcal{D}_{x^\star, -u^\star}(x_1^k, u_2^k) + \bar{\mathcal{D}}_{x^\star, u^\star}(x_2^k, u_1^k) &= D_f(x_1^k, x^\star, u^\star) + D_{f^*}(u_1^k, u^\star, x^\star) \\
     &\quad + D_g(x_2^k, x^\star, -u^\star)
    + D_{g^*}(u_2^k,-u^\star, x^\star)\\
     &= \langle u_1^k-u^\star, x_1^k-x^\star \rangle + \langle u_2^k+u^\star,x_2^k-x^\star \rangle
\end{aligned}
\end{equation}
 for all $k\in \N$.

Since $\mathcal{D}_{x^\star, -u^\star}$ and $\bar{\mathcal{D}}_{x^\star, u^\star}$ are convex, we have by Jensen's inequality that
\begin{align*}
    \mathcal{D}_{x^\star, -u^\star}(\bar x_1^K, \bar u_2^K)  &+ \bar{\mathcal{D}}_{x^\star, u^\star}(\bar x_2^K, \bar u_1^K) \\
    &\leq \frac{1}{K+1}\sum_{k = 0}^{K}\left(\mathcal{D}_{x^\star, -u^\star}(x_1^k, u_2^k) + \bar{\mathcal{D}}_{x^\star, u^\star}(x_2^k, u_1^k)\right) \\
    &= \frac{1}{K+1}\sum_{k=0}^{K}\left(\langle u_1^k-u^\star, x_1^k-x^\star\rangle + \langle u_2^k+u^\star, x_2^k-x^\star\rangle\right) 
    \\
    &= \frac{1}{K+1}\sum_{k=0}^{K}\left(I_k - \langle u_1^{k+1}-u^\star, x_1^{k+1}- x^\star\rangle - \langle u_2^{k+1}+u^\star, x_2^{k+1}- x^\star\rangle\right)  \\
    &\leq \frac{1}{K+1} \sum_{k=0}^{K} I_k\leq \frac{1}{\alpha\theta (K+1)}\sum_{k=0}^{K}(\alpha \theta I_k + R_k)
    \leq \frac{V_0}{\alpha\theta (K+1)}
\end{align*}
for all $K\in \N$ and for $V_k$ and $R_k$ defined in \cref{def::V_R} and $I_k$ defined in \cref{def::I_k}, where the first equality follows from \cref{eq::Bregman_equality_Duality_gap}, the second equality from \cref{def::I_k}, the second inequality from monotonicity of $\partial f$ and $\partial g$ since $(x_1^{k+1},u_1^{k+1}),(x^\star,u^\star)\in\gra\partial f$ and $(x_2^{k+1},u_2^{k+1}),(x^\star,-u^\star)\in\gra\partial g$ by \cref{rem::subdifferential}, the third inequality from nonnegativity of $R_k$ (\cref{prp::analysis}), and the fourth inequality by summation and telescoping of the Lyapunov equality \eqref{eq::lemma_equality} in \cref{lemma::analysis} and by nonnegativity of $V_k$ (\cref{prp::analysis}).
By nonnegativity of the primal-dual gap functions, this shows ergodic convergence with rate $\mathcal{O}(1/K)$.
\end{proof}

\section{Extending ADMM, Chambolle--Pock, and more} \label{sec::admm_cp}

In this section, we apply \cref{alg::main} to different problem formulations and, by invoking \cref{thm::main_theorem}, arrive at new convergent variants of ADMM, the Chambolle--Pock method, and Douglas--Rachford splitting applied to a consensus problem formulation, sometimes referred to as parallel splitting. This section concludes with the derivation of a new version of the Generalized Alternating Projections method, allowing for overreflected projections. 

\subsection{Extending ADMM} \label{sec::admm}
To arrive at an extended version of ADMM, we follow the derivation and notation of \cite[Section~3.1]{ryu2022large}. We specialize to the finite-dimensional case $\Hil = \R^n$, though we note that, under mild assumptions, we also have weak convergence in the general real Hilbert setting. Let $f : \R^p \to \R \cup \{+\infty\}$ and $g : \R^q \to \R \cup \{+\infty\}$ be proper, closed, and convex functions. Let $A \in \R^{n \times p}, B \in \R^{n \times q}$, and $c \in \R^n$. Consider the following convex optimization problem: 
\begin{equation} \label{eq::Problem_ADMM}
\begin{aligned}
    \minimize_{(x, y)\in \R^p \times \R^q}\; &f(x) + g(y)\\
    \text{subject to } &Ax + By = c
\end{aligned}
\end{equation}
which can be reformulated as the following unconstrained problem,
\begin{align} \label{eq::Problem_ADMM_2} 
    \minimize_{z\in \R^n}\; (A\rhd f)(z) + (B\rhd g)(c-z),
\end{align}
using the infimal postcomposition, defined as
$$
(A\rhd f)(z) \coloneqq \inf \{f(x) \mid Ax = z\}
$$
for each $z\in\R^n$ and analogously for $B \rhd g$.
The classical ADMM method alternately minimizes
the augmented Lagrangian of \cref{eq::Problem_ADMM}
$$L_\alpha(x,y,u) := f(x) + g(y) + \langle u, Ax + By-c\rangle + \frac{\alpha}{2}\|Ax + By-c\|^2$$ 
and updates the dual variable $u$ according to
\begin{equation}
\label{eq::Classical_ADMM}
\left\lfloor
\begin{aligned}
        x^{k+1}&\in \Argmin_{x\in \R^p}L_{\alpha}\left(x, y^k, u^k\right)\\
        y^{k+1} &\in\Argmin_{y\in \R^q}L_\alpha ( x^{k+1}, y, u^k)\\
        u^{k+1} &= u^k + \alpha(Ax^{k+1}+By^{k+1}-c)
\end{aligned} \right. .
\end{equation}
This method can be derived by applying unrelaxed Douglas--Rachford splitting (i.e., with $\theta=1$) either to \cref{eq::Problem_ADMM_2} or to the corresponding dual problem
\begin{equation}  
    \label{eq::ADMM_Dual}
    \minimize_{u\in \Hil}\underbrace{f^*(-A^\top u)}_{=: \tilde f(u)} + \underbrace{g^*(-B^\top u) + c^\top u}_{=: \tilde g(u)}.
\end{equation}

By instead applying \cref{alg::main} to \cref{eq::ADMM_Dual},  and invoking the convergence result in \cref{thm::main_theorem}, we obtain new convergent variants of ADMM.
For a fixed relaxation parameter $\theta$, we arrive at \cref{alg::ADMM_simpler}, which extends the classical ADMM in \cref{eq::Classical_ADMM} by allowing for different penalty parameters $\alpha$ and $\beta$ in the two augmented Lagrangians. From \cref{thm::Full_characterization}, we conclude, under mild additional assumptions, that \cref{alg::ADMM_simpler} converges if $0< \alpha < 2\beta$. Using \cref{alg::ADMM_simpler} it is also possible to derive new versions of proximal ADMM (see for example \cite{ryu2022large} for a derivation with regular ADMM), and the preconditioned version of ADMM described in \cite[Section 3.4.2]{BoydParikhChuPeleatoEckstein2011}, with different penalty parameters $\alpha$ and $\beta$. 
\begin{algorithm}[htbp]
	\caption{Extended ADMM}
	\begin{algorithmic}[1]
	    \State \textbf{Input:} $(f, g, A, B)$ as in \eqref{eq::Problem_ADMM}, $u^0 \in \R^n$, $(\alpha, \beta) \in  \R_{++}^2$ such that $\alpha < 2\beta$.
		\For {$k=0,1,2,\ldots$}
		    \State \textbf{Update: } \begin{equation*}
                \left\lfloor
                \begin{aligned}
                    x^{k+1}&\in \Argmin_{x\in \R^p}L_{\beta}\left(x, y^k, u^k\right)\\
                    y^{k+1} &\in\Argmin_{y\in \R^q}L_\alpha ( x^{k+1}, y, u^k)\\
                    u^{k+1} &= u^k + \alpha(Ax^{k+1}+By^{k+1}-c)
                \end{aligned} \right. 
                \end{equation*}
		\EndFor
	\end{algorithmic}
    \label{alg::ADMM_simpler}
\end{algorithm}

For a general relaxation parameter $\theta\in(0,\min\{2,2\beta/\alpha\})$ we instead arrive at the following extended version of the generalized ADMM \cite{EcksteinBertsekas1992}, see \cref{alg::ADMM_extended}. 
\begin{algorithm}[htbp]
	\caption{Extended Generalized ADMM}
	\begin{algorithmic}[1]
    	\State \textbf{Input:} $(f, g, A, B)$ as in \eqref{eq::Problem_ADMM}, $u^0 \in \R^n$, $(\alpha, \beta, \theta) \in  \R_{++}^3$ such that $\theta < \min\{2, 2\beta/\alpha\}$.
		\For {$k=0,1,2,\ldots$}
		    \State \textbf{Update: } \begin{equation*}
                \left\lfloor
                \begin{aligned}
                    x^{k+1}&\in \Argmin_{x\in \R^p}L_{\beta}\left(x, y^k, u^k + \alpha(1-\theta)(By^k-c)\right)\\
                    y^{k+1} &\in\Argmin_{y\in \R^q}L_{\alpha} (\theta x^{k+1}, y, u^k)\\
                    u^{k+1} &= u^k + \theta\alpha(Ax^{k+1}+By^{k+1}-c)
                \end{aligned} \right. 
                \end{equation*}
            \EndFor
	\end{algorithmic}
\label{alg::ADMM_extended}
\end{algorithm}

For the derivation we use the following identity (see \cite[Equation~(2.6)]{ryu2022large}).
Let $h: \R^{m}\rightarrow \R\cup\{+\infty\}$ be a proper, closed and convex function and let $C\in \R^{n\times m}$. If $\relint\dom h^*\cap\ran (-C^\top) \neq \emptyset$ then
\begin{equation}
    \label{eq::Argmin_prox}
    v = \prox_{\alpha h^*\circ (-C^\top)}(u) \iff \begin{cases}
    \displaystyle x\in \Argmin_{x\in \R^n}\left(h(x)+\langle u, Cx \rangle + \frac{\alpha}{2}\norm{Cx}^2\right)\\
    v = u+\alpha Cx.
\end{cases}
\end{equation}

Applying the extended Douglas--Rachford method in \cref{alg::main} to \eqref{eq::ADMM_Dual} with parameters $(\alpha, \beta, \theta\alpha/\beta)$ for $(\alpha, \beta, \theta)\in \R_{++}^3$ 
gives the update
\begin{equation}
\label{eq::ADMM_DR_update}
\begin{aligned}
    \mu^{k+1/2} &= J_{\alpha \partial \tilde g}(\psi^k)\\
    \mu^{k+1} &= J_{\beta \partial \tilde f}\left(\left(1+\tfrac{\beta}{\alpha}\right)\mu^{k+1/2}-\tfrac{\beta}{\alpha}\psi^k\right)\\
    \psi^{k+1} &= \psi^k + \frac{\theta\alpha}{\beta}(\mu^{k+1}-\mu^{k+1/2}).
\end{aligned}
\end{equation}

 We have that $J_{\alpha \partial \tilde g}(\cdot) = J_{\alpha \partial (g^*\circ (-B^\top))}(\cdot-\alpha c)$ by \cite[Proposition~23.17]{BauschkeCombettes2017}. Then by using \cref{eq::Argmin_prox} and defining auxiliary variables $\tilde y^{k+1}\in \R^q$ and $\tilde{x}^{k+1}\in\R^p$, the update of \cref{eq::ADMM_DR_update} becomes
\begin{align*}
    \tilde{y}^{k+1} &\in \Argmin_{y\in \R^q} \left(g(y) + \left\langle \psi^k - \alpha c, By \right\rangle + \frac{\alpha}{2}\|By\|^2 \right)\\
    &= \Argmin_{y\in \R^q}\left(g(y) + \left\langle \psi^k - \theta\alpha A\tilde{x}^k, By\right\rangle + \frac{\alpha}{2}\|\theta A\tilde{x}^k + By-c \|^2\right)\\
    \mu^{k+1/2} &= \psi^k + \alpha B\tilde{y}^{k+1}-\alpha c\\
    \tilde{x}^{k+1}&\in \Argmin_{x\in \R^p}\left(f(x)+ \left\langle \left(1+\frac{\beta}{\alpha}\right)\mu^{k+1/2}-\frac{\beta}{\alpha}\psi^k, Ax \right\rangle + \frac{\beta}{2}\|Ax\|^2\right) \\
    &= \Argmin_{x\in \R^p}\left(f(x) + \left\langle \psi^k + (\alpha + \beta)(B\tilde{y}^{k+1}-c), Ax \right\rangle + \frac{\beta}{2}\|Ax\|^2\right)\\
    &= \Argmin_{x\in \R^p}\left(f(x) + \left\langle \psi^k + \alpha(B\tilde{y}^{k+1}-c), Ax\right\rangle + \frac{\beta}{2}\|Ax + B\tilde{y}^{k+1}-c\|^2\right)\\
    \mu^{k+1} &= \psi^{k} + (\alpha+\beta)(B\tilde{y}^{k+1}-c)  + \beta A\tilde{x}^{k+1}\\
    \psi^{k+1} &= \psi^k + \frac{\theta\alpha}{\beta}(\mu^{k+1}-\mu^{k+1/2}) = \psi^{k} + \theta\alpha\left( A\tilde{x}^{k+1} + B\tilde{y}^{k+1}-c\right).
\end{align*}
We can now remove $\mu^{k+1}$ and $\mu^{k+1/2}$ as they are redundant, and make the substitution
$u^k = \psi^k -\theta\alpha A\tilde{x}^k$, to
arrive at the update
\begin{align*}
    \tilde{y}^{k+1}&\in \Argmin_{y\in \R^q}\left(g(y) + \langle u^k, By\rangle + \frac{\alpha}{2}\|\theta  A\tilde{x}^k + By-c \|^2\right)\\
    \tilde{x}^{k+1}&\in \Argmin_{x\in \R^p}\left(f(x) + \langle u^{k+1}+(1-\theta)\alpha(B\tilde{y}^{k+1}-c), Ax\rangle + \frac{\beta}{2}\|Ax + B\tilde{y}^{k+1}-c\|^2\right)\\
    u^{k+1} &= u^k + \theta\alpha\left(A\tilde{x}^k + B\tilde{y}^{k+1}-c\right).
\end{align*}
Using the definition of the augmented Lagrangian and reordering, we get the updates of \cref{alg::ADMM_extended}
and by \cref{thm::main_theorem} we have convergence if $\theta\alpha/\beta \in (0,\min\{2, 2\alpha/\beta\}) \iff \theta \in (0, \min\{2\beta/{\alpha}, 2\})$. We then get \cref{alg::ADMM_simpler} by letting $\theta = 1$.

\subsection{Extending Chambolle--Pock} 
\label{sec::extended_chambolle_pock}
To arrive at an extended version of the Chambolle--Pock method, we follow the derivation and notation of \cite{o2020equivalence}. We specialize to the finite-dimensional case $\Hil = \R^n$, though we note that, under mild assumptions, we also have weak convergence in the general real Hilbert setting. Let $f : \R^n \to \R \cup \{+\infty\}$ and $g : \R^m \to \R \cup \{+\infty\}$ be proper, closed, and convex functions and let $A \in \R^{m \times n}$. Consider the optimization problem
\begin{equation} \label{eq::optimization_problem_cp}
    \minimize_{x \in \R^n} \  f(x) + g(Ax).
\end{equation} 
This optimization problem can be solved with the Primal-Dual Hybrid Gradient algorithm (PDHG), also known as the Chambolle--Pock algorithm \cite{chambolle2011first}, which is given by the update step
\begin{equation}
\left\lfloor
\begin{aligned}
x^{k+1} &= \prox_{\tau f}\left(x^{k} - \tau A^\top z^{k} \right)\\
z^{k+1} &= \prox_{\sigma g^*}\left(z^{k} + \sigma A\left(x^{k+1} + \theta(x^{k+1}- x^{k}) \right) \right)\\
\end{aligned} \right..\label{eq::CP_iteration}
\end{equation} 
This algorithm was originally shown to converge for $\theta = 1$ and $\tau\sigma \norm{A}^2< 1$, see \cite{chambolle2011first}, where $\norm{A}$ denotes the spectral norm of $A$.
Recent analyses of this Chambolle--Pock algorithm allow for larger parameter regions than in \cite{chambolle2011first}. 
In \cite{li2021new,ma2023understanding,yan2024improved}, the step-size convergence region is extended to $\tau \sigma \norm{A}^2 < 4/3$, still with $\theta=1$.
These results have been generalized in \cite{banert2025chambolle} to the $\theta\neq 1$ setting. In particular, they show convergence whenever $\theta > 1/2$ and $\tau \sigma \norm{A}^2 < 4/(1 + 2\theta)$. Moreover, the bound for $\tau \sigma \norm{A}^2$ is tight, see \cite{banert2025chambolle}. 

In \cite[Algorithm 4]{he2012convergence}, they propose the following relaxation for the Chambolle--Pock method:
\begin{equation}
\left\lfloor
\begin{aligned}
\bar{x}^{k} &= \prox_{\tau f}\left(x^{k} - \tau A^\top z^{k} \right)\\
\bar{z}^{k} &= \prox_{\sigma g^*}\left(z^{k} + \sigma A\left(2\bar{x}^k - x^{k} \right) \right)\\
x^{k+1} &= x^{k} + \rho\left(\bar{x}^k - x^{k} \right)\\
z^{k+1} &= z^{k} + \rho\left(\bar{z}^k - z^{k} \right)
\end{aligned} \right.\label{eq::rerelaxed_CP}
\end{equation} 
and show convergence whenever $\tau \sigma\norm{A}^2 < 1$ and $\rho \in (0, 2)$, which was later extended in \cite{Condat2013} to $\tau \sigma\norm{A}^2 \leq 1$ and $\rho \in (0, 2)$.
This method has been shown in \cite{o2020equivalence} to be a special case of the relaxed Douglas--Rachford algorithm when applied to a specific problem formulation. This route results in the same convergent parameter region as in \cite{Condat2013}, i.e., $\tau \sigma\norm{A}^2 \leq 1$ and $\rho \in (0, 2)$.
 
When the extended Douglas--Rachford method proposed in this paper is combined with the approach of \cite{o2020equivalence}, we obtain the doubly relaxed Chambolle--Pock method of \cref{alg::extended_chambolle_pock}, combining both types of relaxations. See \cref{sec::CP_derivation} for its derivation. 

\begin{algorithm}[htbp]
	\caption{Doubly Relaxed Chambolle--Pock}
	\begin{algorithmic}[1]
        \State \textbf{Input:} $(f, g, A)$ as in \cref{eq::optimization_problem_cp}, $(x^0, z^0) \in \R^n \times \R^m$, \\ \hspace{12.5mm} $(\tau, \sigma, \theta, \rho) \in \R^4_{++}$ such that $\rho \in (0, \min\{2, 2\theta\})$ and $\tau\sigma\norm{A}^2 \leq 1/\theta$.
		\For {$k=0,1,2\ldots$}
		    \State \textbf{Update: }
            \begin{equation*}
            \left\lfloor
            \begin{aligned}
            \bar{x}^{k} &= \prox_{\tau f}\left(x^{k} - \tau A^\top z^{k} \right)\\
            \bar{z}^{k} &= \prox_{\sigma g^*}\left(z^{k} + \sigma A\left(\bar{x}^k + \theta (\bar{x}^k - x^{k}) \right) \right)\\
            x^{k+1} &= x^{k} + \rho\left(\bar{x}^k - x^{k} \right)\\
            z^{k+1} &= z^{k} + \rho\left(\bar{z}^k - z^{k} \right)
            \end{aligned} \right.
            \end{equation*}
		\EndFor
	\end{algorithmic}
\label{alg::extended_chambolle_pock}
\end{algorithm}

This algorithm is guaranteed to converge on a strictly larger parameter region---namely $(\tau, \sigma, \theta, \rho) \in \R^4_{++}$ such that $\rho < \min\{2, 2\theta\}$ and $\tau\sigma\norm{A}^2 \leq 1/\theta$---than that established in \cite{o2020equivalence}. In fact, our analysis of \cref{alg::extended_chambolle_pock} is tight in the sense that neither of these two conditions can be relaxed without restricting the other. 
To see this, let $n=m=1$, $A=1$, $f=g^*=0$.
This defines an optimization problem of the form \cref{eq::optimization_problem_cp} with the unique minimizer $0$,
and gives the iteration
\begin{align}
    \label{eq::Matrix_CP}
    v^{k+1} = \begin{bmatrix} 1 & -\rho\tau\\
    \rho\sigma & 1-\rho\sigma\tau(1+\theta)
    \end{bmatrix}v^k,
\end{align}
where $v^k=(x^k,z^k)$. With the variable $\omega \coloneqq \tau\sigma\norm{A}^2$, we let $\lambda_{-}$ denote the eigenvalue of \cref{eq::Matrix_CP} given by the expression
\begin{align*}
    \lambda_{-}(\rho, \omega, \theta) \coloneqq 1-\frac{\rho\omega(1+\theta)}{2} - \frac{\rho\sqrt{\omega}}{2}\sqrt{\omega(1+\theta)^2-4}
\end{align*} for each $(\rho, \omega, \theta) \in \R_{++}^3$. For all $\omega \geq 1/\theta$ with $\theta,\rho>0$,
the discriminant of $\lambda_{-}(\rho, \omega, \theta)$ is nonnegative and $\lambda_{-}(\rho, \omega, \theta) \in \R$. Note that $\lambda_-(\rho, \omega, \theta)$ is continuous and decreasing in the variables $\rho, \omega$ for $\omega\geq 1/\theta$, i.e., $\lambda_-(\bar{\rho}, \bar{\omega}, \theta) \leq \lambda_-(\rho, \omega, \theta)$ holds for all $\bar{\rho} \geq \rho$ and $\bar{\omega} \geq \omega\geq 1/\theta$. At the bound $\rho = \min\{2, 2\theta\}$ and $\omega = 1/\theta$ we have after a short calculation that $\lambda_-(\min\{2, 2\theta\}, 1/\theta, \theta) = -1$ for all $\theta \in \R_{++}$, which gives, by continuity, that none of the upper bounds can be relaxed without simultaneously restricting the other.

This result reveals a trade-off between the two bounds: the conditions $\rho < \min\{2, 2\theta\}$ and $\tau\sigma\norm{A}^2 \leq 1/\theta$ cannot be relaxed independently---making one upper bound larger necessarily forces the other to become more restrictive. However, if we, for instance, fix $\rho\in(0,\min\{2, 2\theta\})$ to a specific value, the admissible step-size region can be enlarged. For example, for $\rho = 1$ and $\theta > 1/2$, the analysis in \cite{banert2025chambolle} allows $\tau\sigma\norm{A}^2 < 4/(1 + 2\theta)$, which is strictly less restrictive than our bound in this regime. It remains to be seen if the Chambolle--Pock method can be derived from the Douglas--Rachford method without conservatism in the step-size restriction for each individual choice of $\rho$.

Let us summarize our contributions in the doubly relaxed Chambolle--Pock setting. We present, to the best of the authors' knowledge, the first convergence guarantee for the doubly relaxed Chambolle--Pock method, i.e., with freedom in both relaxation parameters $\rho$ and $\theta$. Interestingly, the step sizes $\tau,\sigma$ can be chosen arbitrarily large by taking the relaxation parameters $\rho$ and $\theta$ small enough. We also provide the first convergence analysis for the case $\theta\in(0,1/2)$.

\subsubsection{Derivation of Doubly Relaxed Chambolle--Pock}
\label{sec::CP_derivation}

We will now derive \cref{alg::extended_chambolle_pock}, following the steps in \cite{o2020equivalence}, and prove its parameter convergence region. Let $\gamma>0$ be such that $\gamma \norm{A} \leq 1$. Let $C \coloneqq (\gamma^{-2}I - A A^\top)^{1/2} \in \R^{m \times m}$ and $B \coloneqq \begin{bmatrix} A & C \end{bmatrix} \in \R^{m \times (n+m)}$, implying that $B B^\top = \gamma^{-2}I$. The matrices $C$ and $B$ are introduced only for the derivation and the final algorithm will be independent of these, see \cref{alg::extended_chambolle_pock}. Consider the lifted proper, closed, and convex functions $\tilde{f}, \tilde{g} : \R^{n} \times \R^m \to \R \cup \{+\infty\}$ defined by
\begin{align*}
    \tilde{f}(u_1, u_2) &\coloneqq f(u_1) + \iota_{\{0\}}(u_2), \\
    \tilde{g}(u_1, u_2) &\coloneqq g(Au_1 + Cu_2),
\end{align*}for each $u = (u_1, u_2) \in \R^{n} \times \R^m$. Consider the optimization problem
\begin{equation} \label{eq::optimization_problem_cp_2}
    \minimize_{u \in \R^n\times\R^m} \  \tilde{f}(u) + \tilde{g}(u),
\end{equation} which is equivalent to \eqref{eq::optimization_problem_cp}. The corresponding optimality condition to \eqref{eq::optimization_problem_cp_2} is
\begin{equation} \label{eq::cp_optimality_condition_2}
    0 \in \partial \tilde{f}(u) + B^\top \circ \partial g\circ B(u),
\end{equation} 
where, under mild assumptions, $\partial \tilde g = B^\top \circ \partial g \circ B : \R^{n + m} \rightrightarrows \R^{n + m}$ and 
solving the inclusion problem \cref{eq::cp_optimality_condition_2} is equivalent to solving the optimization problem \cref{eq::optimization_problem_cp}, see \cite{rockafellar1970convex,BauschkeCombettes2017}.

Let $(\tau, \eta, \rho) \in S^{(1)} \cup S^{(2)}$ and $y^0 = (y^0_1, y^0_2) \in \R^n \times \R^m$, then \cref{alg::main} applied to \eqref{eq::cp_optimality_condition_2} gives
\begin{equation} \label{eq::cp_alg_1}
\begin{aligned}
    \bar{u}^k &= \prox_{\tau \tilde{f}}(y^k) \\
    w^k &= \prox_{\eta \tilde g}\left(\left(1 + \frac{\eta}{\tau} \right)\bar{u}^k - \frac{\eta}{\tau}y^k \right) \\
    y^{k+1} &= y^k + \rho\left(w^k - \bar{u}^k \right).
\end{aligned}
\end{equation} Using $\bar{w}^k \coloneqq \left(\left(1 + \frac{\eta}{\tau} \right)\bar{u}^k - \frac{\eta}{\tau}y^k \right) - w^k$ and the Moreau decomposition we get that \eqref{eq::cp_alg_1} becomes
\begin{equation}\label{eq::cp_alg_2}
\begin{aligned}
    \bar{u}^k &= \prox_{\tau \tilde{f}}(y^k) \\
    \bar{w}^k &= \prox_{(\eta \tilde{g})^{*}}\left(\left(1 + \frac{\eta}{\tau} \right)\bar{u}^k - \frac{\eta}{\tau}y^k \right) \\
    y^{k+1} &= \left( 1 - \rho \frac{\eta}{\tau}\right)y^k + \rho\left(\frac{\eta}{\tau}\bar{u}^k - \bar{w}^k\right).
\end{aligned}
\end{equation}Introduce $\sigma \coloneqq \gamma^2/\eta$ and use the identity $$\prox_{(\eta \tilde{g})^{*}}(u) = \eta B^\top \prox_{\sigma  g^*}(\sigma B u)$$ that holds for all $u \in \R^{n + m}$ (see \cite[Equation~(34)]{o2020equivalence}), to reformulate \eqref{eq::cp_alg_2} as
\begin{equation}\label{eq::cp_alg_3}
\begin{aligned}
    \bar{u}^k &= \left(\prox_{\tau f}(y^k_1), 0\right) \\
    \bar{w}^k &= \eta B^\top \prox_{\sigma g^*}\left(\sigma B\left(\left(1 + \frac{\eta}{\tau} \right)\bar{u}^k - \frac{\eta}{\tau}y^k\right)\right) \\
    y^{k+1} &= \left( 1 - \rho \frac{\eta}{\tau}\right)y^k + \rho\left(\frac{\eta}{\tau}\bar{u}^k - \bar{w}^k\right).
\end{aligned}
\end{equation} Introduce the variables $u^k,w^k\in \R^{n+m}$ for all $k\in\N$ such that
\begin{align*}
    u^{k+1} &= \left(1 - \rho \frac{\eta}{\tau}\right) u^k + \rho \frac{\eta}{\tau} \bar{u}^k \\
    w^{k} &= u^{k} - y^k
\end{align*} with $u^0= (u^0_1, 0)$ so that
\begin{equation}\label{eq::cp_variables}\begin{cases}
    y^0_1 &= u^0_1 - w^0_1\\
    y^0_2 &= - w^0_2
\end{cases}.\end{equation} Moreover,
\begin{align*}
    w^{k+1} &= u^{k+1} - y^{k+1} \\
    &= \left(1 - \rho \frac{\eta}{\tau}\right)(u^k - y^k) + \rho \bar{w}^k \\
    &= \left(1 - \rho \frac{\eta}{\tau}\right)w^k+ \rho \bar{w}^k 
\end{align*} and so we reformulate \eqref{eq::cp_alg_3} as
\begin{equation}\label{eq::cp_alg_4}
\begin{aligned}
    \bar{u}^k &= \left(\prox_{\tau f}(u^k_1 - w^k_1), 0\right) \\
    \bar{w}^k &= \eta B^\top \prox_{\sigma g^*}\left(\sigma \frac{\eta}{\tau} Bw^k + \sigma B\left(\left(1 + \frac{\eta}{\tau} \right)\bar{u}^k - \frac{\eta}{\tau}u^k\right)\right) \\
    u^{k+1} &= \left(1 - \rho \frac{\eta}{\tau}\right) u^k + \rho \frac{\eta}{\tau} \bar{u}^k \\
    w^{k+1} &= \left(1 - \rho \frac{\eta}{\tau}\right)w^k+ \rho \bar{w}^k.
\end{aligned} 
\end{equation}
Since $\bar{u}^k_2 = 0$ for all $k \in \N$ and $u^0_2 = 0$, we conclude that $u^k_2 = 0$ for all $k \in \N$. Let $$\bar{z}^k \coloneqq \prox_{\sigma g^*}\left(\sigma \frac{\eta}{\tau}Bw^k + \sigma B\left(\left(1 + \frac{\eta}{\tau} \right)\bar{u}^k - \frac{\eta}{\tau}u^k\right)\right), $$ so that $\bar{w}^k = \eta B^\top \bar{z}^k$ holds for all $k \in \N$. Moreover, 
\begin{align}
    B\bar{w}^k &= \eta B B^\top \bar{z}^k \notag\\
    &= \frac{\eta}{\gamma^2} \bar{z}^k \label{eq::barzk}.
\end{align} Therefore, by choosing $w^0$ to be in the range of $B^\top$ also $w^k$ will be in the range of $B^\top$ for all $k \in \N$, i.e., $w^k = \tau B^\top z^k$ for some $z^k \in \R^{m}$. Moreover, 
\begin{align}
    Bw^k &= \tau B B^\top z^k \notag\\
    &= \frac{\tau}{\gamma^2} z^k \label{eq::zk}.
\end{align}Let $x^k \coloneqq u^k_1$ and $\bar{x}^k \coloneqq \bar{u}^k_1$ for all $k \in \N$. Then,
\begin{align*}
    x^{k+1} &= \left(1 - \rho \frac{\eta}{\tau}\right) x^k + \rho \frac{\eta}{\tau} \bar{x}^k \\
    z^{k+1} &= \frac{\gamma^2}{\tau} B w^{k+1} \\
    &= \left(1 - \rho \frac{\eta}{\tau}\right)\frac{\gamma^2}{\tau} B w^k+ \rho \frac{\gamma^2}{\tau} B \bar{w}^k \\
    &= \left(1 - \rho \frac{\eta}{\tau}\right)z^k+ \rho \frac{\eta}{\tau} \bar{z}^k,
\end{align*} where we in the last equality used \eqref{eq::barzk} and \eqref{eq::zk}. Given initial points $x^0 \in \R^n$ and $z^0 \in \R^m$, let $w^0 = \tau B^\top z^0$ and $u^0 = (x^0, 0)$ and set $y^0 \in \R^n \times \R^m$ according to \eqref{eq::cp_variables}. Note that when $C$ is non-singular, i.e., when $\gamma \norm{A} < 1$, then this mapping between $(x^0, z^0)$ and $y^0$ is a bijection. 

Now let's perform a final change of variables. At this point, we have that \eqref{eq::cp_alg_4} becomes
\begin{equation}\label{eq::cp_alg_5}
\begin{aligned}
    \bar{x}^{k} &= \prox_{\tau f}\left(x^{k} - \tau A^\top z^{k} \right)\\
    \bar{z}^{k} &= \prox_{\sigma g^*}\left(z^{k} + \sigma A\left(\left(1 + \frac{\eta}{\tau}\right)\bar{x}^k - \frac{\eta}{\tau} x^{k} \right) \right)\\
    x^{k+1} &= x^{k} + \rho\frac{\eta}{\tau}\left(\bar{x}^k - x^{k} \right)\\
    z^{k+1} &= z^{k} + \rho\frac{\eta}{\tau}\left(\bar{z}^k - z^{k} \right)
\end{aligned}
\end{equation} with the bounds $(\tau, \eta, \sigma, \rho) \in \R^4_{++}$ such that $\rho \in (0, \min\{2, 2\tau/\eta\})$ and $\eta \sigma \norm{A}^2 \leq 1$. Let $\theta = \eta/\tau$ and $\bar{\rho} = \rho \theta$, then \eqref{eq::cp_alg_5} becomes
\begin{equation*}
\left\lfloor
\begin{aligned}
    \bar{x}^{k} &= \prox_{\tau f}\left(x^{k} - \tau A^\top z^{k} \right)\\
    \bar{z}^{k} &= \prox_{\sigma g^*}\left(z^{k} + \sigma A\left(\left(1 + \theta \right)\bar{x}^k - \theta x^{k} \right) \right)\\
    x^{k+1} &= x^{k} + \bar{\rho}\left(\bar{x}^k - x^{k} \right)\\
    z^{k+1} &= z^{k} + \bar{\rho}\left(\bar{z}^k - z^{k} \right)
\end{aligned} \right.
\end{equation*} with bounds $\bar{\rho} \in (0, \min\{2, 2\theta\})$ and $\tau\sigma\norm{A}^2 \leq 1/\theta$, which is exactly \cref{alg::extended_chambolle_pock}.

\subsection{Extending Parallel Splitting} \label{sec::parallel_splitting}
In this section, we extend the so-called Parallel Splitting algorithm, see \cite[Proposition~28.7]{BauschkeCombettes2017}, which is the Douglas--Rachford algorithm applied to a consensus problem formulation. Consider a family of functions $\{f_i\}_{i = 1}^n$ where $f_i \in \mathcal{F}$ for all $i \in \{1, \dots, n\}$ and the optimization problem
\begin{equation} \label{eq::optimization_problem_parallel_splitting}
    \minimize_{x \in \Hil} \sum_{i = 1}^n f_i(x).
\end{equation} Let $\Delta \coloneqq \{y \in \Hil^n \mid y_1 = y_2 = \cdots = y_n\}$ and let $f : \Hil^n \to \R \cup \{+\infty\}$ be given by $f(y) \coloneqq \sum_{i = 1}^n f_i(y_i)$ for each $y \in \Hil^n$. Consider the lifted consensus problem
\begin{equation} \label{eq::optimization_problem_parallel_splitting_lifted}
    \minimize_{y \in \Hil^n} f(y) + \iota_{\Delta}(y),
\end{equation} which is equivalent to \eqref{eq::optimization_problem_parallel_splitting}, with sufficient optimality condition
\begin{equation} \label{eq::optimization_problem_parallel_splitting_lifted_optimality}
    0 \in \partial f(y) + \partial \iota_{\Delta}(y).
\end{equation} 
 Then, $$\left(J_{\alpha \partial f}(y)\right)_i = \prox_{\alpha f_i}(y_i)\quad (\forall i \in \{1, \dots, n\}),$$ and $$\left(\prox_{\beta \iota_{\Delta}}(y)\right)_i = \frac{1}{n}\sum_{j = 1}^n y_j \quad (\forall i \in \{1, \dots, n\}),$$ holds for all $y \in \Hil^n$. Moreover, the projection $\prox_{\beta \iota_{\Delta}}$ is a linear mapping independent of $\beta$. We can therefore let $\gamma = \beta/\alpha$ range freely when \cref{alg::main} is applied to \eqref{eq::optimization_problem_parallel_splitting_lifted_optimality}, as long as $\theta < \min\{2, 2/\gamma\}$. We state \cref{alg::main} applied to \eqref{eq::optimization_problem_parallel_splitting_lifted_optimality} in \cref{alg::extended_parallel_splitting}. This generalizes the parallel splitting method in \cite[Proposition~28.7]{BauschkeCombettes2017} that is obtained by letting $\gamma=1$.
\cref{alg::extended_parallel_splitting} converges for these parameters if \cref{eq::optimization_problem_parallel_splitting_lifted_optimality} has a solution.
\begin{algorithm}[H]
	\caption{Extended Parallel Splitting}
	\begin{algorithmic}[1]
        \State \textbf{Input:} $f : \Hil^n \to \R \cup \{+\infty\}$ as in \cref{sec::parallel_splitting}, $y^0 \in \Hil^n$, \\
        \hspace{13mm}$(\alpha, \gamma, \theta) \in \R^3_{++}$ such that $\theta < \min\{2, 2/\gamma\}$.
		\For {$k=0,1,2\ldots$}
		    \State \textbf{Update: }
            \begin{equation*}
            \left\lfloor
            \begin{aligned}
            p^k &= \frac{1}{n}\sum_{j = 1}^n y^k_j \\
            x^{k}_i &= \prox_{\alpha  f_i}(y^k_i) \quad (\forall i \in \{1, \dots, n\})\\ 
            q^k &= \frac{1}{n}\sum_{j = 1}^n x^k_j \\
            y^{k+1}_i &= y^k_i + \theta \left(\left(1 + \gamma\right) q^k - \gamma p^k - x^k_i \right) \quad (\forall i \in \{1, \dots, n\})
            \end{aligned} \right.
            \end{equation*}
		\EndFor
	\end{algorithmic}
\label{alg::extended_parallel_splitting}
\end{algorithm}

\subsection{Extending Generalized Alternating Projections}

In this section, we will derive an extended version of the generalized alternating projections method \cite{GAP_Agmon,GAP_Motzkin,GAP_Bregman,falt-optimal, Dao_2018_Feasibility}, which is also called the averaged relaxed alternating projections method or generalized Douglas--Rachford splitting. 

Let $C\subset\Hil$ and $D\subset\Hil$ be nonempty, closed, and convex sets.
We consider the problem
\begin{equation}
    \label{eq::Feasibility}
        \text{find } x\in \Hil \text{ such that } x\in C\cap D.
\end{equation}
Using the notation of \cref{eq::reflected_res_rem}, the Generalized Alternating Projections method has the update
\begin{equation*}
   z^{k+1} = (1-\lambda)z^k + \lambda R_D^{\mu_2}R_C^{\mu_1}z^k
\end{equation*}
for $\lambda\in (0,1]$ and $(\mu_1, \mu_2)\in (0,2)$ or $\lambda\in (0,1)$ and $(\mu_1, \mu_2)\in(0,2]$, see for example \cite{falt-optimal}. In \cite{Dao_2018_Feasibility} convergence is shown for larger values of the relaxation parameter $\lambda$. 

By applying \cref{alg::main} to an appropriate reformulation, we get \cref{alg::Extended_GAP}. This extends generalized alternating projections which is obtained by letting $\gamma=\beta/\alpha=1$ and requires $\mu_1,\mu_2\in(0,2)$ and $\lambda\in(0,1)$. This is, to the best of the authors' knowledge, the first projection method of this type that allows for relaxed projection steps that are longer than reflections.

\begin{algorithm}[htbp]
	\caption{Extended Generalized Alternating Projections}
	\begin{algorithmic}[1]
	    \State \textbf{Input:} Nonempty closed convex sets $C, D\subset \Hil$, $z^0 \in \Hil$, and\\
        $\qquad\qquad(\mu_1, \mu_2, \lambda) \in (0, 1 + \gamma]\times (0, 1 + \gamma^{-1}]\times \left(0, \min\left\{\frac{2}{1+\gamma}, \frac{2}{1 + \gamma^{-1}}\right\}\right)$ for some $\gamma\in \R_{++}$. 
		\For {$k=0,1,2,\ldots$}
		    \State \textbf{Update: } \begin{equation*}
                \begin{aligned}
                   z^{k+1} = (1-\lambda)z^k + \lambda R_D^{\mu_2}R_C^{\mu_1}z^k
                \end{aligned} 
                \end{equation*}
		\EndFor
	\end{algorithmic}
    \label{alg::Extended_GAP}
\end{algorithm}

The generalized alternating projections method can be derived by applying Douglas--Rachford splitting to the problem
\begin{align} \label{eq::alt_proj_problem}
    \minimize_{x\in\Hil}\;c\dist_{C}^2(x)+d\dist_{D}^2(x),
\end{align}
with constants $c,d\in (0,+\infty]$
and the interpretation that $(+\infty)\cdot \dist_{C}^2 = \iota_{C}$. If $C\cap D$ is nonempty, this problem is equivalent to the feasibility problem \cref{eq::Feasibility}, and the associated inclusion problem is solvable under mild assumptions (see for example \cite{BauschkeCombettes2017}).

By applying \cref{alg::main} to \cref{eq::alt_proj_problem}, we instead get the Extended Generalized Alternating Projections algorithm stated in \cref{alg::Extended_GAP}, which we will now derive.

The proximal operator of the squared distance function, with step size $\alpha\in \R_{++}$, satisfies
\begin{align*}
    \prox_{\alpha c\dist_C^2} = \frac{c^{-1}}{c^{-1}+2\alpha}\Id+\frac{2\alpha}{c^{-1}+2\alpha}\Pi_C,
\end{align*}
and analogously for $\prox_{\beta d\dist_D^2}$ with the convention that $1/\infty = 0$. The corresponding relaxed proximal operator defined in \eqref{eq::reflected_res_rem} becomes
\begin{align*}
    R_{\alpha c\dist_C^2}^{\alpha,\beta} &= \left(1+\frac{\beta}{\alpha}\right)\prox_{\alpha c\dist_C^2} - \frac{\beta}{\alpha}\Id\\
    &=\left(1+\frac{\beta}{\alpha}\right)\left(\frac{c^{-1}}{c^{-1}+2\alpha}\Id+\frac{2\alpha}{c^{-1}+2\alpha}\Pi_C\right) - \frac{\beta}{\alpha}\Id\\
    &=\left(1-\frac{2(\alpha+\beta)}{c^{-1}+2\alpha}\right)\Id + \frac{2(\alpha+\beta)}{c^{-1}+2\alpha }\Pi_C.
\end{align*}
The expression $\frac{2(\alpha+\beta)}{c^{-1}+2\alpha }$ is increasing for $c>0$ and $\frac{2(\alpha+\beta)}{c^{-1}+2\alpha }\in\left(0,1+\frac{\beta}{\alpha}\right]$ for $c\in (0, +\infty]$. Let us introduce $\mu_1\coloneqq\frac{2(\alpha+\beta)}{c^{-1}+2\alpha}\in\left(0,1+\frac{\beta}{\alpha}\right]$ and $\mu_2\coloneqq\frac{2(\alpha+\beta)}{d^{-1}+2\beta}\in\left(0,1+\frac{\alpha}{\beta}\right]$, where $\mu_1$ and $\mu_2$ can take any values in these intervals for appropriate choices of $c$ and $d$ respectively, and define
\begin{align*}
    R_C^{\mu_1} &= (1-\mu_1)\Id+\mu_1\Pi_C\qquad\text{and}\qquad R_D^{\mu_2} = (1-\mu_2)\Id+\mu_2\Pi_D.
\end{align*}

Applying our Douglas--Rachford algorithm reformulation in \eqref{eq::alg_composition_rem} to this problem formulation gives the iteration
\begin{align*}
    z^{k+1} &= \left(1-\frac{\theta\beta}{\alpha+\beta}\right)z^k + \frac{\theta\beta}{\alpha+\beta}R_{D}^{\mu_2}R_{C}^{\mu_1}z^k
    = \left(1-\lambda\right)z^k + \lambda R_{D}^{\mu_2}R_{C}^{\mu_1}z^k,
\end{align*}
where $\lambda :=\frac{\theta\beta}{\alpha+\beta}$. Now, by \cref{thm::main_theorem} we have convergence if $\theta\in\left(0,\min\{2,2\alpha/\beta\}\right)$, implying that 
\begin{align*}
    \lambda \in \left(0,\min\left\{\frac{2\beta}{\alpha+\beta},\frac{2\alpha}{\alpha+\beta}\right\}\right).
\end{align*}
Let us also introduce $\gamma\coloneqq\beta/\alpha>0$, to get that \cref{alg::Extended_GAP} converges if
\begin{align*}
    \mu_1\in(0,1+\gamma]\qquad\text{and}\qquad \mu_2\in(0,1+\gamma^{-1}]\qquad\text{and}\qquad \lambda\in\left(0,\min\left\{\frac{2}{1+\gamma},\frac{2}{1+\gamma^{-1}}\right\}\right).
\end{align*}

\section{Conclusions}
\label{sec::conclusions}

In this work, we have shown that, in the convex optimization setting, the Douglas--Rachford splitting method is not the unique unconditionally convergent, frugal, no-lifting resolvent-splitting scheme. When the operators are subdifferentials of proper, closed, convex functions, we identify a strictly larger class of such methods and provide a complete, sharp characterization of all parameter choices that yield unconditional convergence. This characterization, in turn, immediately yields new, provably convergent families of ADMM- and Chambolle--Pock–type algorithms via their Douglas–Rachford reformulations, including parameter regimes that were previously not known to converge.

Our Lyapunov functions explicitly involve function values, placing the analysis beyond the standard averaged-operator framework. Because the Lyapunov construction uses function values of only the function whose subdifferential is applied first in each iteration, our results extend to mixed problems combining one subdifferential operator with one general maximally monotone operator, provided the algorithm respects this order. Preliminary numerical evidence from \cite{UpadhyayaTaylorBanertGiselsson2025AutoLyap} suggests that the same parameter region may remain valid when the order of the operators is reversed, although establishing this rigorously will require a new Lyapunov analysis. We formalize this in the following conjecture.

\begin{conjecture} \label{conj}
\cref{alg::main} converges unconditionally for $(\alpha, \beta, \theta) \in \R^2_{++} \times (\R \setminus \{0\})$ over $\mathcal{A} \times \partial\mathcal{F}$ if and only if $(\alpha, \beta, \theta) \in S^{(1)} \cup S^{(2)}$, i.e., $$S(\mathcal{A} \times \partial\mathcal{F}) = S(\partial\mathcal{F} \times \partial \mathcal{F}) = S^{(1)} \cup S^{(2)}.$$
\end{conjecture}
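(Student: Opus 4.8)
The plan is to follow the architecture of \cref{sec::Convergence Analysis}, treating necessity and sufficiency separately. Necessity needs no new argument: since $\partial\mathcal{F}\subset\mathcal{A}$ we have $\partial\mathcal{F}\times\partial\mathcal{F}\subset\mathcal{A}\times\partial\mathcal{F}$, hence $S(\mathcal{A}\times\partial\mathcal{F})\subset S(\partial\mathcal{F}\times\partial\mathcal{F})=S^{(1)}\cup S^{(2)}$ by \cref{thm::main_theorem}. Equivalently, the two counterexamples built in the proof of \cref{prp::counters}, namely $(f,g)=(0,\iota_{\{0\}})$ and $(f,g)=(\iota_{\{0\}},0)$, both lie in $\partial\mathcal{F}\times\partial\mathcal{F}\subset\mathcal{A}\times\partial\mathcal{F}$, so they already force $\theta\in(0,\min\{2,2\alpha/\beta\})$. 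Thus the whole content of the conjecture is the sufficiency inclusion $S^{(1)}\cup S^{(2)}\subset S(\mathcal{A}\times\partial\mathcal{F})$, i.e., unconditional convergence of \cref{alg::main} applied to $(A,\partial g)\in\mathcal{A}\times\partial\mathcal{F}$ with $\zer(A+\partial g)\neq\emptyset$ whenever $(\alpha,\beta,\theta)\in S^{(1)}\cup S^{(2)}$.

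The obstruction is that the Lyapunov functions in \cref{def::V_R} contain Bregman-type distances $D_f$ of the \emph{first} operator's function, which is meaningless when the first operator $A$ is a general maximally monotone operator. The remedy is to build the analysis around $g$ instead: since $x_2^k=J_{\beta\partial g}(\cdot)\in\dom\partial g\subset\dom g$, the quantities $D_g(x_2^k,\cdot,\cdot)$ are well defined and nonnegative by \cref{lem::Dh_nonnegative}. Concretely, fix $(x^\star,x^\star,z^\star)\in\Fix(A,\partial g,\alpha,\beta)$, set $u^\star\coloneqq\tfrac1\alpha(z^\star-x^\star)\in A(x^\star)$ (so that $-u^\star\in\partial g(x^\star)$), and define $u_1^k\coloneqq\tfrac1\alpha(z^k-x_1^k)\in A(x_1^k)$ and $u_2^k\in\partial g(x_2^k)$ as in \cref{def::us}. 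One then posits modified pairs $(\widetilde V_k^{(i)},\widetilde R_k^{(i)})$ for $i\in\{1,2\}$, structurally analogous to \cref{def::V_R} but with the terms $2\alpha(1-\tau)D_f(x_1^k,x^\star,u^\star)$, $2\alpha(1-\tau)D_f(x_1^k,x_1^{k+1},u_1^{k+1})$ (and their $i=2$ counterparts) replaced by suitable multiples of $D_g(x_2^k,x^\star,-u^\star)$, $D_g(x_2^k,x_2^{k+1},u_2^{k+1})$ and the quadratic parts $Q_k^{(i)}$ adjusted accordingly; these would be located with the automated Lyapunov tools of \cite{UpadhyayaBanertTaylorGiselsson2025AutoLyapTight,UpadhyayaTaylorBanertGiselsson2025AutoLyap} and verified symbolically. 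The target is the analogue of \cref{lemma::analysis}: for $(\alpha,\beta,\theta)\in S^{(i)}$ one has $\widetilde V_k^{(i)}\geq0$, $\widetilde R_k^{(i)}\geq0$, and
\[
\widetilde V_{k+1}^{(i)}=\widetilde V_k^{(i)}-\widetilde R_k^{(i)}-\theta\alpha\widetilde I_k,
\]
where $\widetilde I_k$ is the same four-term sum as in \cref{def::I_k}; it is nonnegative by exactly the argument of \cref{lem::I_k}, the first and third inner products now coming from monotonicity of $A$ and the second and fourth from monotonicity of $\partial g$.

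Granting this Lyapunov equality, the remaining steps transcribe \cref{prp::analysis,prp::weak_convergence} with only cosmetic changes. Telescoping gives summability of $(\widetilde R_k^{(i)})$ and $(\widetilde I_k)$, hence $x_1^k-x_2^k\to0$ and $u_1^k+u_2^k=\tfrac1\beta(x_1^k-x_2^k)\to0$; monotonicity of $\widetilde V_k^{(i)}$ together with the shape of $\widetilde Q_k^{(i)}$ yields boundedness of $(x_1^k,x_2^k,z^k)$ and of $(u_1^k,u_2^k)$; the Bregman pieces satisfy $0\le D_g(x^\star,x_2^k,u_2^k)+D_g(x_2^k,x^\star,-u^\star)=\langle u_2^k+u^\star,x_2^k-x^\star\rangle\le\langle u_1^k-u^\star,x_1^k-x^\star\rangle+\langle u_2^k+u^\star,x_2^k-x^\star\rangle\to0$ (using monotonicity of $A$ where \cref{prp::analysis} used monotonicity of $B$), so $(\widetilde Q_k^{(i)})$ converges. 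Weak cluster points of $(x_1^k,x_2^k,z^k)$ are then forced into $\Fix(A,\partial g,\alpha,\beta)$ through the maximal monotonicity and weak--strong sequential closedness of the graph of
\[
C=\begin{bmatrix}A&\Id\\-\Id&\partial g^*\end{bmatrix}=\begin{bmatrix}A&0\\0&\partial g^*\end{bmatrix}+\begin{bmatrix}0&\Id\\-\Id&0\end{bmatrix},
\]
uniqueness of the cluster point follows from convergence of $(\widetilde Q_k^{(i)})$ via the quadratic-form identity used in \cref{prp::weak_convergence}, and \cite[Lemma~2.47]{BauschkeCombettes2017} concludes weak convergence. Combining with necessity as in \cref{sec::thm_proof} then finishes the proof.

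The main obstacle is squarely the first part of the sufficiency argument: constructing $(\widetilde V_k^{(i)},\widetilde R_k^{(i)})$ and proving the Lyapunov equality with $\widetilde V_k^{(i)},\widetilde R_k^{(i)}\geq0$ on \emph{all} of $S^{(i)}$. As in \cref{def::V_R} one expects two representations agreeing on $S^{(1)}\cap S^{(2)}$ whose manifest nonnegativity each requires its own region; the delicate point is choosing the $D_g$-multipliers and the coefficients of the reshuffled quadratic form so that nonnegativity holds exactly on $\{\theta<\min\{2,2\alpha/\beta\}\}$ and not merely on some strictly smaller set. An alternative route via the Attouch--Th\'era duality, which relates \cref{alg::main} applied to $(A,\partial g)$ to a reversed-order instance involving $\partial g^*$ and $A^{-1}$, looks attractive, but the step-size bookkeeping under this duality does not obviously map $S^{(1)}\cup S^{(2)}$ to itself when $\alpha\neq\beta$, so it is unclear whether it shortcuts the Lyapunov computation. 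The numerical evidence of \cite{UpadhyayaTaylorBanertGiselsson2025AutoLyap} indicates that the region is indeed $S^{(1)}\cup S^{(2)}$, which is why the result is stated as a conjecture pending this computation.
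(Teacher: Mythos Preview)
The statement is a \emph{conjecture} in the paper, not a theorem; the paper provides no proof. The authors explicitly note that their Lyapunov construction uses function values of only the function whose subdifferential is applied \emph{first}, and that handling the reversed order $\mathcal{A}\times\partial\mathcal{F}$ ``will require a new Lyapunov analysis,'' with only preliminary numerical evidence from the AutoLyap tools supporting the conjectured region.

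Your proposal correctly identifies this: necessity is trivial from the inclusion $\partial\mathcal{F}\times\partial\mathcal{F}\subset\mathcal{A}\times\partial\mathcal{F}$ and \cref{thm::main_theorem}, while sufficiency is the entire content and is genuinely open. Your outline---replacing the $D_f$ terms by $D_g$ terms anchored at $x_2^k$, adjusting the quadratic part, and then rerunning the machinery of \cref{prp::analysis,prp::weak_convergence}---is exactly the approach the paper hints at, and the downstream steps you sketch (summability, boundedness, Bregman-term decay via monotonicity of $A$ in place of $B$, weak--strong closedness of $\gra C$) would indeed go through verbatim once the Lyapunov equality is in hand. You are also right that the hard part is not the architecture but the concrete existence of coefficients making $\widetilde V_k^{(i)},\widetilde R_k^{(i)}\ge 0$ on all of $S^{(i)}$; this is precisely what the paper leaves unresolved. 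Your remark on Attouch--Th\'era duality is apt: the natural duality swap does not preserve the $(\alpha,\beta)$ asymmetry in the way one would need, so it does not obviously reduce the reversed case to \cref{thm::main_theorem}.

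In short, there is no proof in the paper to compare against, and your proposal is an accurate diagnosis of why the statement remains a conjecture rather than a completed argument.
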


Beyond the Douglas--Rachford setting, we expect that many fixed-parameter methods for monotone inclusions may admit strictly larger regions of guaranteed convergence when their analysis is specialized to the convex optimization setting.

\newpage
\begin{appendices}
\section[]{Verification of Equality \eqref{eq::lem_equations} in \cref{lem::Lyapunov_def_alternative_2} and \eqref{eq::lemma_equality} in \cref{lemma::analysis}} \label{appendix}
The results shown in this Appendix are also verified symbolically in\footnote{\url{https://github.com/MaxNilsson-phd/Symbolic_verification_of_Extending_Douglas--Rachford_Splitting_for_Convex_Optimization}}. 

We will occasionally place $*$ in the lower-triangular part of a matrix to avoid repeating entries. The resulting matrix should always be interpreted as symmetric; for example, $$
\begin{bmatrix}
    a & b & c \\
    * & d & e \\
    * & * & f 
\end{bmatrix} = 
\begin{bmatrix}
    a & b & c \\
    b & d & e \\
    c & e & f 
\end{bmatrix}.$$
\subsection[]{Verification of \eqref{eq::lem_equations} in \cref{lem::Lyapunov_def_alternative_2}}
\label{subsec::Proof_of_Lemma_Lyapunov}
Let $\bar{V}_k$ denote the right hand side of the first equality of \eqref{eq::lem_equations}. Expanding the norms of the quantities in \cref{def::V_R} and \cref{lem::Lyapunov_def_alternative_2} gives the following expressions in terms of quadratic forms for all $k \in \N$,
    \begin{align*}
        V_k &= \mathcal{Q}\left(
        \underbrace{
        \begin{bmatrix}
         \theta - \tau + 1 & -\frac{\theta(1+\tau)}{2} & 0 & 0 &\tau - 1 - \frac{\theta}{2}\\[6pt]
        * & \theta \tau & 0 & 0 &\frac{\theta}{2}\\[6pt]
        * & * & 0 & 0 & 0\\[6pt]
        * & * & * & 0 & 0\\[6pt]
        * & * & * & * & 1
        \end{bmatrix}}_{=: Q_V^{(1)}} ,
        \begin{bmatrix}
            x^k_1 - x^\star \\[6pt]
            x^k_2 - x^\star \\[6pt]
            x^{k+1}_1 - x^\star \\[6pt]
            x^{k+1}_2 - x^\star \\[6pt]
            z^k - z^\star
        \end{bmatrix}
        \right) + 2\alpha(1-\tau) D_f(x^k_1, x^\star, u^\star), \\
        \bar{V}_k &= \mathcal{Q}\left(
        \underbrace{
        \begin{bmatrix}
         \theta + \tau - 1 & -\frac{\theta(1+\tau)}{2} &0 & 0 & - \frac{\theta}{2}\\[6pt]
        * & \theta \tau &0 & 0 &  \frac{\theta}{2}\\[6pt]
            * & * & 0 & 0 & 0\\[6pt]
        * & * & * & 0 & 0\\[6pt]
        * & * & * & * &1
        \end{bmatrix}}_{=: Q_V^{(2)}} ,
        \begin{bmatrix}
            x^k_1 - x^\star \\[6pt]
            x^k_2 - x^\star \\[6pt]
            x^{k+1}_1 - x^\star \\[6pt]
            x^{k+1}_2 - x^\star \\[6pt]
            z^k - z^\star
        \end{bmatrix}
        \right) + 2\alpha(\tau - 1) D_f(x^\star, x^k_1, u^k_1)\\
        &= \mathcal{Q}\left( Q_V^{(1)}, \begin{bmatrix}
            x^k_1 - x^\star \\[6pt]
            x^k_2 - x^\star \\[6pt]
            x^{k+1}_1 - x^\star \\[6pt]
            x^{k+1}_2 - x^\star \\[6pt]
            z^k - z^\star
        \end{bmatrix}\right) - 2(\tau-1)\langle x_1^k-x^\star,\underbrace{z^k-z^\star-( x_1^k-x^\star)}_{=\alpha(u_1^k-u^\star)}\rangle + 2\alpha(\tau - 1) D_f(x^\star, x^k_1, u^k_1).
        \end{align*}
        By \cref{def::algorithm_sequences} and by \cref{lem::Bregman_two_point} we then have that
        \begin{align*}
            \bar{V}_k-V_k = -2\alpha(\tau-1)\langle x_1^k -x^\star, u_1^k-u^\star\rangle + 2\alpha(\tau-1) \left(D_f(x_1^k, x^\star, u^\star) + D_f(x^\star, x_1^k, u_1^k)\right) = 0.
        \end{align*}

        Similarly, if we let $\bar{R}_k$ denote the right hand side of the second equality of \eqref{eq::lem_equations}, we obtain the following expressions in terms of quadratic forms for all $k \in \N$,
        \begin{align*}
        R_k &= \mathcal{Q}\left(
        \underbrace{
        \begin{bmatrix}
        \theta(2\tau-\theta) - \tau + 1 & -\theta(2\tau - \theta) & -(1 - \tau) & 0 & 0\\[6pt]
        * & \theta(2\tau - \theta) & 0 & 0 & 0\\[6pt]
        * & * & 1 - \tau & 0 & 0\\[6pt]
        * & * & * & 0 & 0\\[6pt]
        * & * & * & * & 0
        \end{bmatrix}}_{=: Q_R^{(1)}}
        ,
        \begin{bmatrix}
            x^k_1-x^\star \\[6pt]
            x^k_2-x^\star \\[6pt]
            x^{k+1}_1 -x^\star\\[6pt]
            x^{k+1}_2-x^\star\\[6pt]
            z^k-z^\star
        \end{bmatrix}
        \right) \\
        &\qquad+ 2\alpha(1-\tau)D_f(x_1^k, x_1^{k+1}, u_1^{k+1}),\\
        \bar{R}_k &= \mathcal{Q}\left(
        \underbrace{
        \begin{bmatrix}
         \theta(2 - \theta) + \tau - 1 & \theta(\theta-1-\tau) & (\theta-1)(\tau-1) & 0 & 0\\[6pt]
        * & \theta(2\tau - \theta) & -\theta(\tau-1) & 0 & 0 \\[6pt]
        * & * & \tau - 1 & 0 & 0\\[6pt]
        * & * & * & 0 & 0\\[6pt]
        * & * & * & * & 0
        \end{bmatrix}}_{=: Q_R^{(2)}}
        ,
        \begin{bmatrix}
            x^k_1-x^\star \\[6pt]
            x^k_2-x^\star \\[6pt]
            x^{k+1}_1-x^\star\\[6pt]
            x^{k+1}_2-x^\star\\[6pt]
            z^k-z^\star
        \end{bmatrix}
        \right)\\ &\qquad+ 2\alpha(\tau-1)D_f( x_1^{k+1}, x_1^k,u_1^{k}).
        \end{align*}
        Moreover
        \begin{align} \label{eq::Q_Rs}
            Q_R^{(2)} - Q_R^{(1)} &=  (\tau-1)\begin{bmatrix}
                 2-2\theta & \theta & \theta-2 & 0 & 0\\
                * & 0 & -\theta & 0 & 0\\
                * & * & 2 & 0 & 0\\
                * & * & * & 0 & 0\\
                * & * & * & * & 0
            \end{bmatrix}.
        \end{align}
    By \cref{def::algorithm_sequences} and by \cref{lem::Bregman_two_point} we have that the difference of the Bregman terms of $\bar{R}_k$ and $R_k$ is
    \begin{align*}
        &2\alpha(\tau-1)\left(D_f(x_1^{k+1}, x_1^k,u_1^{k}) + D_f(x_1^k, x_1^{k+1}, u_1^{k+1})\right) = \langle x_1^k-x_1^{k+1}, u_1^k - u_1^{k+1}\rangle\\
        &= 2(\tau-1)\langle x_1^k - x_1^{k+1}, z^k - x_1^k - (z^k + \theta(x_2^k-x_1^k)- x_1^{k+1})\rangle \\
        &= 2(1-\tau)\langle x_1^k-x_1^{k+1}, x_1^{k}-x_1^{k+1}+\theta(x_2^k-x_1^k) \rangle \\
        &= (1-\tau)\mathcal{Q}\left(\begin{bmatrix}2-2\theta & \theta & \theta-2\\[6pt]
        * & 0 & -\theta\\[6pt]
        * & * & 2
        \end{bmatrix}\begin{bmatrix}
            x_1^k-x^\star\\[6pt]
            x_2^k-x^\star\\[6pt]
            x_1^{k+1}-x^\star
        \end{bmatrix}\right).
    \end{align*}
    Together with \eqref{eq::Q_Rs} this shows that $\bar{R}_k -R_k = 0$.\\

    \subsection[]{Verification of Equality \eqref{eq::lemma_equality} in \cref{lemma::analysis}}
    \label{subsec::Proof_of_LE}
    What remains to show is the Lyapunov equality \cref{eq::lemma_equality}. We use the quadratic expressions from \cref{subsec::Proof_of_Lemma_Lyapunov}.
    We have for all $k\in \N$ that
    \begin{align*}
    V_{k+1} &= \mathcal{Q}\left(Q_V^{(1)}, \begin{bmatrix}
        x^{k+1}_1-x^\star \\
            x^{k+1}_2-x^\star \\
            x^{k+2}_1-x^\star\\
            x^{k+2}_2-x^\star\\
            z^{k+1}-z^\star
    \end{bmatrix}\right) + 2\alpha(1-\tau) D_f(x^{k+1}_1, x^\star, u^\star).\\
    \end{align*}
    Since the third and fourth rows and columns of $Q_V^{(1)}$ are zero, we get that the quadratic form in $V_{k+1}$ can be expressed as follows:
    {
    \allowdisplaybreaks
    \begin{align*}
    \mathcal{Q}\left(Q_V^{(1)}, \begin{bmatrix}
        x^{k+1}_1-x^\star \\
            x^{k+1}_2-x^\star \\
            x^{k+2}_1-x^\star\\
            x^{k+2}_2-x^\star\\
            z^{k+1}-z^\star
    \end{bmatrix}\right) &= \mathcal{Q}\left(Q_V^{(1)}, \begin{bmatrix}
        x^{k+1}_1-x^\star \\
            x^{k+1}_2-x^\star \\
            0\\
            0\\
            z^{k+1}-z^\star
    \end{bmatrix}\right) =
     \mathcal{Q}\left(Q_V^{(1)}, \underbrace{\begin{bmatrix}
        0 & 0 & 1 & 0 & 0\\
        0 & 0 & 0 & 1 & 0\\
        0 & 0 & 0 & 0 & 0\\
        0 & 0 & 0 & 0 & 0\\
        -\theta & \theta & 0 & 0 & 1\\
    \end{bmatrix}}_{=:M}\begin{bmatrix}
            x^k_1-x^\star \\
            x^k_2-x^\star \\
            x^{k+1}_1-x^\star\\
            x^{k+1}_2-x^\star\\
            z^k-z^\star
        \end{bmatrix}\right)\\&= \mathcal{Q}\left(M^\top\begin{bmatrix}
         \theta - \tau + 1 & -\frac{\theta(1+\tau)}{2} & 0 & 0 &\tau - 1 - \frac{\theta}{2}\\[6pt]
        * & \theta \tau & 0 & 0 &\frac{\theta}{2}\\[6pt]
        * & * & 0 & 0 & 0\\[6pt]
        * & * & * & 0 & 0\\[6pt]
        * & * & * & * & 1
        \end{bmatrix}M, \begin{bmatrix}
            x^k_1-x^\star \\
            x^k_2-x^\star \\
            x^{k+1}_1-x^\star\\
            x^{k+1}_2-x^\star\\
            z^k-z^\star
        \end{bmatrix}\right)\\
    &= \mathcal{Q}\left(\underbrace{\begin{bmatrix}
        \theta^2 & - \theta^2 & -\theta(\tau-1-\frac{\theta}{2}) & - \frac{\theta^2}{2} & -\theta\\[6pt]
        * &  \theta^2 & \theta(\tau-1-\frac{\theta}{2}) &  \frac{\theta^2}{2} & \theta\\[6pt]
        * & * & 1 - \tau + \theta& - \frac{\theta(1+\tau)}{2} & \tau - 1 - \frac{\theta}{2}\\[6pt]
        * & * & * & \theta \tau & \frac{\theta}{2}\\[6pt]
        * & * & * & * & 1
    \end{bmatrix}}_{=M^\top Q_V^{(1)} M}, \begin{bmatrix}
            x^k_1-x^\star \\[6pt]
            x^k_2-x^\star \\[6pt]
            x^{k+1}_1-x^\star\\[6pt]
            x^{k+1}_2-x^\star\\[6pt]
            z^k-z^\star
        \end{bmatrix}\right),
    \end{align*}
    }\relax
   where the first equality is due to the zero columns of $Q_V^{(1)}$ and the second equality follows from the update step of \cref{alg::main}. 
    
    For the Bregman-type distances we have by \cref{lem::Bregman_three_point} that 
        {
        \allowdisplaybreaks
        \begin{align*}
            D_f(x^k_1, x^\star, u^\star) &- D_f(x^{k+1}_1, x^\star, u^\star) - D_f(x^k_1, x^{k+1}_1, u^{k+1}_1) = \langle u^{k+1}_1 - u^\star, x^k_1 - x^{k+1}_1 \rangle \\
            &=  \frac{1}{\alpha}\langle z^{k+1} - z^\star - (x^{k+1}_1 - x^\star), x^k_1 - x^{k+1}_1 \rangle  \\
            &= \frac{1}{\alpha}\langle z^{k} - z^\star + \theta(x^k_2 - x^k_1) - (x^{k+1}_1 - x^\star), x^k_1 - x^{k+1}_1 \rangle  \\
            &= \frac{1}{\alpha}\mathcal{Q}\left(
            \underbrace{\begin{bmatrix}
                -\theta & \tfrac{\theta}{2} & \tfrac{\theta - 1}{2} & 0 & \tfrac{1}{2} \\[6pt]
                * & 0 & -\tfrac{\theta}{2} & 0 & 0 \\[6pt]
                * & * & 1 & 0 & -\tfrac{1}{2} \\[6pt]
                * & * & * & 0 & 0 \\[6pt]
                * & * & * & * & 0
            \end{bmatrix}}_{=:Q_D}
            ,
            \begin{bmatrix}
                x^k_1 - x^\star \\[6pt]
                x^k_2 - x^\star \\[6pt]
                x^{k+1}_1 - x^\star \\[6pt]
                x^{k+1}_2 - x^\star \\[6pt]
                z^k - z^\star
            \end{bmatrix}\right).
        \end{align*}}\relax
        Combining this equality with our expressions for $V_k$, $V_{k+1}$ and $R_k$ we get for each $k\in \N$ that
        \begin{align*}
             V_k - V_{k+1} - R_k &=  \mathcal{Q}\left(Q_V^{(1)}-M^\top Q_V^{(1)}M-Q_R^{(1)},
            \begin{bmatrix}
                x^k_1 - x^\star \\
                x^k_2 - x^\star \\
                x^{k+1}_1 - x^\star \\
                x^{k+1}_2 - x^\star \\
                z^k - z^\star
            \end{bmatrix}
            \right) \\
            &\quad\;+ 2\alpha(1 - \tau)\left(D_f(x_1^k, x^\star, u^\star)- D_f(x_1^{k+1}, x^\star, u^\star) - D_f(x^k_1, x^{k+1}_1, u^{k+1}_1)\right)\\
            &= \mathcal{Q}\left( Q_V^{(1)}-M^\top Q_V^{(1)}M-Q_R^{(1)} + 2(1-\tau)Q_D, \begin{bmatrix}
                x^k_1 - x^\star \\
                x^k_2 - x^\star \\
                x^{k+1}_1 - x^\star \\
                x^{k+1}_2 - x^\star \\
                z^k - z^\star
            \end{bmatrix}\right)
        \end{align*} 
  
    This gives a quadratic expression for $V_k-V_{k+1}-R_k$. Note also that the remaining term of \cref{eq::lemma_equality}, $\theta \alpha I_k$, can be expressed as
        \begin{align}
        \theta \alpha I_k &=\notag\\
        &\theta\alpha\left( \langle u_1^k-u^\star, x_1^k-x^\star \rangle + \langle u_2^k+u^\star, x_2^k-x^\star \rangle+ \langle u_1^{k+1}-u^\star, x_1^{k+1}-x^\star \rangle + \langle u_2^{k+1}+u^\star, x_2^{k+1}-x^\star \rangle\right)\notag\\
        &= \theta \left\langle z^k - z^\star - (x^k_1 - x^\star), x^k_1 - x^\star \right\rangle \notag\\
        &\quad\;+ \theta\left\langle z^{k} - z^\star + \theta(x^k_2 - x^k_1) - (x^{k+1}_1 - x^\star), x^{k+1}_1 - x^\star \right\rangle \notag\\
        &\quad\;+ \theta\left\langle x^{k}_1 - x^\star + \tau(x^{k}_1 - x^{k}_2) - (z^{k} - z^\star) , x^{k}_2 - x^\star \right\rangle \notag\\
        &\quad\;+ \theta\left\langle x^{k+1}_1 - x^\star + \tau(x^{k+1}_1 - x^{k+1}_2) - (z^{k} - z^\star) - \theta(x^k_2 - x^k_1), x^{k+1}_2 - x^\star \right\rangle\notag\\
        &= \theta\mathcal{Q}\left(\underbrace{\begin{bmatrix}
        -1 & \frac{1+\tau}{2} & -\frac{\theta}{2} & \frac{\theta}{2} & \frac{1}{2}\\[6pt]
        * & -\tau & \frac{\theta}{2} & -\frac{\theta}{2} & -\frac{1}{2}\\[6pt]
        * & * & -1 & \frac{1+\tau}{2} & \frac{1}{2}\\[6pt]
        * & * & * & -\tau & -\frac{1}{2}\\[6pt]
        * & * & * & * & 0
        \end{bmatrix}}_{=:Q_I}, \begin{bmatrix}
                x^k_1 - x^\star \\[6pt]
                x^k_2 - x^\star \\[6pt]
                x^{k+1}_1 - x^\star \\[6pt]
                x^{k+1}_2 - x^\star \\[6pt]
                z^k - z^\star
            \end{bmatrix} \right) \label{eq::Q_I_def}.
        \end{align}
        
         Lastly we verify that
        \begin{align*}
            Q_V^{(1)}-M^\top Q_V^{(1)}M-Q_R^{(1)} + 2(1-\tau)Q_D&= \theta Q_I,
        \end{align*} where $Q_I$ is given by \eqref{eq::Q_I_def}:
    {
    \allowdisplaybreaks
     \begin{align*}
            Q_V^{(1)} - M^\top Q_V^{(1)}M - Q_R^{(1)} + 2(1-\tau)Q_D &= \begin{bmatrix}
         \theta - \tau + 1 & -\frac{\theta(1+\tau)}{2} & 0 & 0 &\tau - 1 - \frac{\theta}{2}\\[6pt]
        * & \theta \tau & 0 & 0 &\frac{\theta}{2}\\[6pt]
        * & * & 0 & 0 & 0\\[6pt]
        * & * & * & 0 & 0\\[6pt]
        * & * & * & * & 1
        \end{bmatrix}\\
        &- \begin{bmatrix}
        \theta^2 & - \theta^2 & -\theta(\tau-1-\frac{\theta}{2}) & - \frac{\theta^2}{2} & -\theta\\[6pt]
        * &  \theta^2 & \theta(\tau-1-\frac{\theta}{2}) &  \frac{\theta^2}{2} & \theta\\[6pt]
        * & * & 1 - \tau + \theta& - \frac{\theta(1+\tau)}{2} & \tau - 1 - \frac{\theta}{2}\\[6pt]
        * & * & * & \theta \tau & \frac{\theta}{2}\\[6pt]
        * & * & * & * & 1
    \end{bmatrix}\\
    &-  \begin{bmatrix}
        \theta(2\tau-\theta) - \tau + 1 & -\theta(2\tau - \theta) & -(1 - \tau) & 0 & 0\\[6pt]
        * & \theta(2\tau - \theta) & 0 & 0 & 0\\[6pt]
        * & * & 1 - \tau & 0 & 0\\[6pt]
        * & * & * & 0 & 0\\[6pt]
        * & * & * & * & 0
        \end{bmatrix}\\
        &+ \begin{bmatrix}
                -\theta & \tfrac{\theta}{2} & \tfrac{\theta - 1}{2} & 0 & \tfrac{1}{2} \\[6pt]
                * & 0 & -\tfrac{\theta}{2} & 0 & 0 \\[6pt]
                * & * & 1 & 0 & -\tfrac{1}{2} \\[6pt]
                * & * & * & 0 & 0 \\[6pt]
                * & * & * & * & 0
            \end{bmatrix}2(1-\tau)\\
        &=
        \underbrace{\begin{bmatrix}
        -1 & \frac{1+\tau}{2} & -\frac{\theta}{2} & \frac{\theta}{2} & \frac{1}{2}\\[6pt]
        * & -\tau & \frac{\theta}{2} & -\frac{\theta}{2} & -\frac{1}{2}\\[6pt]
        * & * & -1 & \frac{1+\tau}{2} & \frac{1}{2}\\[6pt]
        * & * & * & -\tau & -\frac{1}{2}\\[6pt]
        * & * & * & * & 0
        \end{bmatrix}}_{=Q_I}\theta
        \end{align*}
    }
        which shows that $V_k-V_{k+1}-R_k =  \theta\alpha I_k$, so equality \cref{eq::lemma_equality} of \cref{lemma::analysis} holds.
\end{appendices}

\printbibliography

\end{document}